\documentclass[11pt,reqno]{amsart}

\usepackage{amsmath,amssymb,amsthm}
\usepackage{mathtools}
\usepackage{mathrsfs}
\usepackage[a4paper,textwidth=16.3cm,textheight=23.4cm,centering]{geometry}
\usepackage{enumitem}
\usepackage{needspace}
\usepackage[expansion=false]{microtype}

\renewcommand\baselinestretch{1.1}
\AtBeginDocument{%
  \setlength{\abovedisplayskip}{4pt plus 2pt minus 2pt}%
  \setlength{\belowdisplayskip}{4pt plus 2pt minus 2pt}%
  \setlength{\abovedisplayshortskip}{3.5pt plus 2pt minus 2pt}%
  \setlength{\belowdisplayshortskip}{3.5pt plus 2pt minus 2pt}%
  \setlength{\jot}{3pt}%
}
\setlist{itemsep=2pt,topsep=4pt,parsep=0pt,partopsep=0pt}

\numberwithin{equation}{section}
\theoremstyle{plain}
\newtheorem{theorem}{Theorem}[section]
\newtheorem{proposition}[theorem]{Proposition}
\newtheorem{lemma}[theorem]{Lemma}
\newtheorem{corollary}[theorem]{Corollary}
\newtheorem*{theorema}{Theorem A}
\newtheorem*{theoremb}{Theorem B}
\theoremstyle{definition}
\newtheorem{definition}[theorem]{Definition}

\newtheorem{remark}[theorem]{Remark}

\usepackage[colorlinks=true,linkcolor=blue,citecolor=blue,urlcolor=blue]{hyperref}
\newcommand{\kk}{\Bbbk}
\newcommand{\OO}{\mathcal O}
\newcommand{\Aut}{\operatorname{Aut}}
\newcommand{\Id}{\operatorname{Id}}
\newcommand{\End}{\operatorname{End}}
\newcommand{\Der}{\operatorname{Der}}
\newcommand{\Lop}{L}
\newcommand{\Rop}{R}

\title[Classification of Simple Novikov Algebras]
{Classification of Finite-Dimensional Simple\\
Novikov Algebras and Their Irreducible Bimodules\\
in Positive Characteristic}
\author{Dong Liu}
\address{Department of Mathematics, Huzhou Normal University,
Huzhou 313000, China}
\email{liudong@huznu.edu.cn}
\author{Yufeng Pei}
\address{Department of Mathematics, Huzhou Normal University,
Huzhou 313000, China}
\email{pei@huznu.edu.cn}
\author{Limeng Xia}
\address{College of Mathematical Sciences, Jiangsu University,
Zhenjiang 212013, China}
\email{xialimeng@ujs.edu.cn}
\subjclass[2020]{Primary 17A30; Secondary 17A36, 17D25}
\keywords{Novikov algebra, simple algebra, irreducible bimodule,
positive characteristic, divided-power algebra}
\hypersetup{
  pdftitle={Classification of Finite-Dimensional Simple Novikov Algebras and Their Irreducible Bimodules in Positive Characteristic},
  pdfauthor={Dong Liu, Yufeng Pei, Limeng Xia},
  pdfsubject={Classification of simple Novikov algebras and irreducible bimodules},
  pdfkeywords={Novikov algebra, simple algebra, irreducible bimodule, positive characteristic}
}
\date{}

\begin{document}

\begin{abstract}
We give a complete classification of finite-dimensional simple
Novikov algebras over an algebraically closed field of characteristic
$p>2$, including families missing from earlier classifications in \cite{Osborn1992b, Xu1996}.
We determine explicit normal forms and all isomorphisms between them,
and classify all nontrivial irreducible bimodules over these algebras.
\end{abstract}

\maketitle

\section{Introduction}
\label{sec:introduction}
\raggedbottom

Novikov algebras were introduced by Gelfand and Dorfman in 1979
in the study of Hamiltonian operators~\cite{GD1979}, and independently
by Balinsky and Novikov in 1985 in connection with Poisson brackets of
hydrodynamic type~\cite{BN1985}.
Osborn~\cite{Osborn1992a,Osborn1992b} introduced the term
\emph{Novikov algebra} and studied the structure and classification of
finite-dimensional simple Novikov algebras.
In characteristic zero, Novikov algebras give rise to infinite-dimensional
Lie algebras by affinization, and to Lie conformal algebras and vertex
algebras~\cite{BN1985,BLP2015}.
Novikov bialgebras similarly yield infinite-dimensional Lie bialgebras
and Lie conformal bialgebras~\cite{HBG2023,HBG2025}.
A Novikov algebra is solvable if and only if it
is right nilpotent~\cite[Theorem~3.3]{SZ2020}.
Recent results concern polynomial identities~\cite{DIU2023} and the
classification of finite simple Novikov conformal algebras~\cite{KL2026}.

Zelmanov~\cite{Zelmanov1987} proved that every finite-dimensional simple
Novikov algebra over an algebraically closed field of characteristic
zero is one-dimensional.
Every finite-dimensional simple Novikov superalgebra over a field of
characteristic different from $2$ has zero odd part~\cite{LPX2019}.
In positive characteristic, Osborn proposed a classification of
finite-dimensional simple Novikov algebras containing a nonzero
idempotent~\cite{Osborn1992b}.
Xu proposed the same classification without the assumption of a nonzero
idempotent~\cite[Main Theorem~1]{Xu1996}
and classified the finite-dimensional irreducible bimodules over the
algebras in his classification~\cite[Main Theorem~2]{Xu1996}
(see also~\cite{Osborn1995modules}).
The classification of Osborn and Xu is incomplete in dimensions $p^n$
with $n\ge2$; Remark~\ref{rem:Osborn-reduction} gives a $p^2$-dimensional
simple Novikov algebra outside their family.

For a commutative associative algebra $\mathcal{A}$, a derivation
$\partial\in\Der(\mathcal{A})$, and $a\in\mathcal{A}$, the twisted
Gelfand--Dorfman construction gives the Novikov algebra
$\mathcal{A}(\partial,a)$ with product
\[
 x\circ y=x\bigl(\partial(y)+ay\bigr).
\]
This product is obtained by adding $axy$ to the differential product
$x\partial(y)$ in~\cite{GD1979}.
The scalar twist was introduced in~\cite{Filippov1989}, and the general
twist $a\in\mathcal{A}$ in~\cite{Xu1997,Xu2001}.
A related PBW embedding theorem for Novikov--Poisson algebras with a
unital associative product was proved in~\cite[Theorem~3.3]{BCZ2018}.

By the structure theorem of Zhelyabin and
Zakharov~\cite[Theorem~3 and Corollary~1]{ZZ2024}, every finite-dimensional
simple Novikov algebra over an algebraically closed field of
characteristic $p>2$ is obtained by the twisted Gelfand--Dorfman
construction, with $\mathcal{A}$ a $\partial$-simple truncated polynomial
algebra.  The one-dimensional case is included by taking
$\partial=0$ and $a=1$ on the ground field.
The corresponding existence theorem over arbitrary fields of positive
characteristic was proved in~\cite[Theorem~2.4]{PZ2026}, where
automorphisms were also studied.
An isomorphism criterion for these algebras was subsequently obtained
in~\cite{PZZ2025}.
The remaining problem is to find explicit normal forms for these
algebras and determine when they are isomorphic.
The aim of this paper is to complete this classification.

\Needspace{3\baselineskip}
In what follows, $\kk$ is an algebraically closed field of characteristic
$p>2$, and $\mathbb F_p$ denotes its prime subfield.
For $n\ge0$, let $\OO(1;n)$ be the truncated divided-power algebra with basis
$x^{(0)},\ldots,x^{(p^n-1)}$ and lowering derivation $\delta$.
The definitions and conventions are collected in
Section~\ref{subsec:divided-power-algebras}; in particular,
$\OO(1;0)=\kk$ and $\delta=0$ on $\OO(1;0)$.
For $\alpha=(\alpha_1,\ldots,\alpha_n)\in\kk^n$ and
$\beta\in\kk$, with $\beta=1$ if $n=0$, put
\begin{equation}
 b_{\alpha,\beta}
 =\sum_{j=1}^n\alpha_jx^{(p^j-2)}+\beta x^{(p^n-1)},
 \qquad
 \mathcal{N}(\alpha,\beta)=\OO(1;n)(\delta,b_{\alpha,\beta}).
 \label{eq:normal-form-parameters}
\end{equation}
When $n=0$, the tuple $\alpha$ is empty and
$\mathcal{N}(\alpha,1)=\kk$ with its usual product.
In the basis $y_i=x^{(i+1)}$, $-1\le i\le p^n-2$, the product of
$\mathcal{N}(\alpha,\beta)$ is
\begin{equation}
\begin{split}
 y_i\circ y_j
 &=\binom{i+j+1}{j}y_{i+j}
   +\beta\delta_{i,-1}\delta_{j,-1}y_{p^n-2}\\
 &\quad+\sum_{s=1}^n\alpha_s
   \binom{i+j+2}{i+1}\binom{i+j+p^s}{p^s-2}
   y_{i+j+p^s-1}.
\end{split}
\label{eq:explicit-algebra-product}
\end{equation}
Here $-1\le i,j\le p^n-2$, $y_r=0$ for $r\notin\{-1,\ldots,p^n-2\}$,
and $\delta_{ij}$ is the Kronecker delta.
By Lemma~\ref{lem:normalized-simple}, each algebra
$\mathcal{N}(\alpha,\beta)$ is simple of dimension $p^n$.

\begin{theorema}[Theorem \ref{thm:final-classification} below]
\begin{enumerate}[label=\textup{(\roman*)},leftmargin=2em]
\item
Every finite-dimensional simple Novikov algebra is isomorphic
to $\mathcal{N}(\alpha,\beta)$ for some $n\ge0$ and parameters as above.

\item
Two algebras
$\mathcal{N}(\alpha,\beta)$ and $\mathcal{N}(\alpha',\beta')$
of dimension $p^n$ are isomorphic if and only if there is
$c\in\kk^\times$ such that
\[
 \alpha'_j=c^{p^j-1}\alpha_j\quad(1\le j\le n),
 \qquad \beta'=c^{p^n}\beta.
\]
\end{enumerate}
\end{theorema}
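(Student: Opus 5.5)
Part (i) starts from the structure theorem of Zhelyabin and Zakharov \cite{ZZ2024}. It says every finite-dimensional simple Novikov algebra has the form $\mathcal A(\partial,a)$, with $\mathcal A$ a $\partial$-simple truncated polynomial algebra. By the classical description of $\partial$-simple commutative algebras (Block / Yuan type results), we may take $\mathcal A\cong\OO(1;n)$ and $\partial=u\,\delta$ with $u$ invertible; in fact $\partial$ can be taken to be a special derivation $\sum_j u_j\partial_j$ up to automorphism of $\OO(1;n)$.

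The first step is to normalize $\partial$ to $\delta$, using an automorphism of $\OO(1;n)$ together with the rescaling $\partial\mapsto u^{-1}\partial$. Under the substitution $x\circ y=x(\partial y+ay)$, multiplying $\partial$ by an invertible $u$ corresponds to an isomorphism $z\mapsto vz$ with $a$ changed accordingly. Concretely, for invertible $v$ the map $\phi(x)=vx$ gives
\[
\phi(x)\circ\phi(y)=vx\bigl(\partial(vy)+avy\bigr)=v^2x\bigl(\partial y+(a+v^{-1}\partial v)y\bigr).
\]
Pulling back the product by $\phi$ therefore replaces $(\partial,a)$ by $(v\partial,\,v(a+v^{-1}\partial v))$. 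This is the "gauge" freedom. After reducing to $\partial=\delta$, the remaining freedom is $a\mapsto a+\delta(v)/v$ combined with automorphisms preserving $\delta$ up to scalar.

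The second step is to show that the classes of $a$ modulo logarithmic derivatives $\delta(v)/v$ are represented by the $b_{\alpha,\beta}$. The image of $\delta$ is spanned by $x^{(k)}$ with $k\le p^n-2$, so the top coefficient $\beta$ is an invariant, up to the scaling below. The equation $\delta(w)/w=c$ is solvable in $\OO(1;n)$ exactly when certain $p$-power coefficients vanish, because $\exp$-type solutions fail at the degrees $p^j-1$ of $w$. So I expect the obstruction space to be spanned by $x^{(p^j-2)}$ and $x^{(p^n-1)}$. Concretely, I would solve the equation degree by degree, writing $w=1+\sum w_kx^{(k)}$ and using $\delta(w)=aw$. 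At each step the recursion determines $w_{k+1}$, and at $k+1=p^j-1$ there is a genuine constraint: the divided-power product $x^{(1)}x^{(p^j-2)}$ produces the binomial factor $\binom{p^j-1}{1}=-1$, which does not vanish, so there is no obstruction there. The obstruction has to come from the Frobenius/Artin–Schreier structure: $v\mapsto \delta(v)/v$ is a homomorphism from the units to the additive group, and its cokernel has dimension $n+1$. I would compute this dimension from $\dim\ker=1$ (the constants, since $\ker\delta=\kk$) together with a filtration count on the multiplicative group. I expect this to be the main obstacle: proving that the cosets of $b_{\alpha,\beta}$ are exactly representative, with the cross terms in the $x^{(p^j-2)}$ coordinates handled by induction on $j$.

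The third step combines with the automorphisms $x\mapsto cx$ of $\OO(1;n)$, which send $\delta\mapsto c^{-1}\delta$. Together with rescaling, these produce the torus action $x^{(k)}\mapsto c^kx^{(k)}$. Tracking degrees, $\alpha_j x^{(p^j-2)}$ scales by $c^{p^j-1}$ relative to the normalization and $\beta$ by $c^{p^n}$. This gives the "if" direction of (ii).

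For the "only if" direction, I would use the isomorphism criterion of \cite{PZZ2025}. Alternatively, I would recover $\mathcal A$ and $\partial$ intrinsically from $\mathcal N(\alpha,\beta)$, via the left multiplication structure or the associated Novikov–Poisson structure. Then any isomorphism is induced by a pair consisting of an automorphism of $\OO(1;n)$ normalizing $\delta$ up to a unit, and a gauge $v$. Automorphisms of $\OO(1;n)$ that preserve $\delta$ up to a unit multiple are, modulo gauge, of the form $x\mapsto cx+\dots$, and the higher terms are absorbed into gauge. Since the $b_{\alpha,\beta}$ are cokernel representatives, the coset condition then forces $\alpha'_j=c^{p^j-1}\alpha_j$ and $\beta'=c^{p^n}\beta$.
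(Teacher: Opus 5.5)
There is a genuine gap in your second step, and it carries over to the ``only if'' half of (ii).

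\textbf{The assumed symmetry does not exist.} Once the derivation is $\delta$, you cannot replace $a$ by $a+\delta(v)/v$. By your own pullback formula (Theorem~\ref{thm:change-data} with $\sigma=\Id$), the gauge by $v$ sends $(\delta,a)$ to $(v\delta,\,va+\delta(v))$. It therefore changes the derivation, and it must be paired with an automorphism $\sigma$ satisfying $v\,\sigma\delta\sigma^{-1}=\delta$. The residual group is $G_n$. By Proposition~\ref{prop:residual-admissible-group} it is the group of admissible automorphisms, with $v=\delta(\sigma(x))$ forced and action $b\mapsto v^{-1}\delta(v)+v\,\sigma(b)$. The gauge and the substitution are coupled, not independent.

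\textbf{Your model gives the wrong classification.}
\begin{itemize}
\item Since $\delta+m_{b+\delta(v)/v}=m_v^{-1}(\delta+m_b)m_v$, translation by logarithmic derivatives preserves $\det(\delta+m_b)$.
\item An element $a$ is a logarithmic derivative if and only if $\det(\delta-m_a)=0$, because a nonzero kernel vector of $\delta-m_a$ is a unit by $\delta$-simplicity. So the logarithmic derivatives are cut out by one equation, and the quotient is one-dimensional, not $(n+1)$-dimensional.
\item Concretely, put $s_\alpha=1-\sum_j\alpha_jx^{(p^j-1)}$. Then $b_{\alpha,0}s_\alpha=b_{\alpha,0}$, hence $b_{\alpha,0}=-\delta(s_\alpha)/s_\alpha=\delta(s_\alpha^{-1})/s_\alpha^{-1}$. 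Every $b_{\alpha,0}$ is a logarithmic derivative, so your procedure would make all $\mathcal N(\alpha,0)$ isomorphic, contradicting (ii).
\item The top coefficient is also not invariant merely because $x^{(p^n-1)}\notin\delta(\OO(1;n))$. For $n=1$, $\delta(1+x)/(1+x)=(1+x)^{-1}$ has coefficient $-1$ at $x^{(p-1)}$.
\end{itemize}

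\textbf{What is needed instead.}
\begin{enumerate}
\item Show that $\sigma(x)=\sum_ra_rx^{(r)}$ defines an admissible automorphism exactly when $a_1\ne0$ and $a_{p^j}=0$ for $1\le j<n$. The forbidden coefficients are detected by the coefficient of $x^{(p^n)}$ in $\sigma(x)^{(p^{n-j})}$. This is why the degrees $p^j-2$ survive, not an Artin--Schreier obstruction for $\delta(w)/w=c$.
\item Prove the leading-term estimate of Lemma~\ref{lem:first-changed-coefficient}: if $a_1=1$ and $r$ is the least $s\ge2$ with $a_s\ne0$, then $\sigma\ast b\equiv b+a_rx^{(r-2)}\pmod{F^{r-1}}$.
\item Apply substitutions $x\mapsto x+c\,x^{(m+2)}$ in increasing $m$. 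They kill every coefficient in degree $m\le p^n-3$ with $m\ne p^j-2$ ($1\le j<n$), which puts $b$ into $S_n$.
\item For uniqueness, the same estimate shows that a non-identity element with $a_1=1$ changes the coefficient in degree $r-2$, which is not a degree of $S_n$. After splitting off $\rho_{a_1}$, only scalings map $S_n$ to itself.
\end{enumerate}
Your sentence ``the higher terms are absorbed into gauge'' is exactly what this argument replaces: the higher terms are what act on $b$.

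\textbf{The other steps.} Your scaling computation in the third step is correct. Your first step is only asserted from ``Block/Yuan type results''. The statement is true (Corollary~\ref{cor:d-simple-derivation-orbit}, then conjugate $\Delta_n$ to $\delta$), but it is the delicate point. A non-nilpotent $\partial$ such as $\Delta_r+\sum_{i>r}\nu_i(1+x_i)\partial_i$ must be converted by unit multiples like $(1+x_{r+1})\partial$. This is where the Osborn--Xu reduction failed, so it needs a proof or a precise reference.
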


The proof starts from the structure theorem
in~\cite[Theorem~3 and Corollary~1]{ZZ2024} and the isomorphism criterion
in~\cite[Theorem~3.3]{PZZ2025}.
Using an argument from~\cite{Premet2015}, we reduce $\partial$ to
$\delta$ by coordinate changes and multiplication by units.
Successive coefficient elimination by admissible automorphisms with
linear coefficient $1$ gives a unique normal form $b_{\alpha,\beta}$.
Scalings then give the parameter identifications in Theorem~A.
As a consequence, we classify finite-dimensional Novikov--Poisson
algebras whose Novikov part is simple
(Corollary~\ref{cor:novikov-poisson-classification}).

For the definition and basic properties of Novikov bimodules,
see~\cite{Osborn1995modules,Xu1996,Xu2001}.
A bimodule is called nontrivial if its two actions are not both zero.
For each $\mathcal{N}(\alpha,\beta)$ and $\lambda\in\kk$, let
$\mathcal{M}(\alpha,\beta,\lambda)$ be the $\mathcal{N}(\alpha,\beta)$-bimodule
with basis $v_{-1},\ldots,v_{p^n-2}$ and actions
\begin{align}
 y_i\circ v_j
 &=\binom{i+j+1}{j}v_{i+j}
   +\lambda\binom{i+j+2}{i+1}v_{i+j+1},
 \label{eq:explicit-module-left}\\
 v_i\circ y_j
 &=\binom{i+j+1}{j}v_{i+j}
   +\beta\delta_{i,-1}\delta_{j,-1}v_{p^n-2}\notag\\
 &\quad+\sum_{s=1}^n\alpha_s
   \binom{i+j+2}{i+1}\binom{i+j+p^s}{p^s-2}
   v_{i+j+p^s-1}.
 \label{eq:explicit-module-right}
\end{align}
Here $-1\le i,j\le p^n-2$, and $v_r=0$ for
$r\notin\{-1,\ldots,p^n-2\}$.
For $n=0$, these formulas give $1\circ v_{-1}=\lambda v_{-1}$ and
$v_{-1}\circ1=v_{-1}$.
The parameter $\lambda$ is the unique eigenvalue of the left action of $1$.

\Needspace{6\baselineskip}
\begin{theoremb}[Theorem \ref{thm:irreducible-bimodule-classification} below]
Fix $\mathcal{N}(\alpha,\beta)$ as in Theorem~A.
\begin{enumerate}[label=\textup{(\roman*)},leftmargin=2em]
\item
For each $\lambda\in\kk$, the bimodule
$\mathcal{M}(\alpha,\beta,\lambda)$ is nontrivial and irreducible
of dimension $p^n$.  Every nontrivial irreducible
$\mathcal{N}(\alpha,\beta)$-bimodule is isomorphic to one of these.

\item
The $\mathcal{N}(\alpha,\beta)$-bimodules
$\mathcal{M}(\alpha,\beta,\lambda)$ and
$\mathcal{M}(\alpha,\beta,\lambda')$ are isomorphic if and only if
$\lambda=\lambda'$.
\end{enumerate}
\end{theoremb}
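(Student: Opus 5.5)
Write $N=\mathcal{N}(\alpha,\beta)=\OO(1;n)(\delta,b)$ with $b=b_{\alpha,\beta}$, and $A=\OO(1;n)$. The bimodule $\mathcal{M}(\alpha,\beta,\lambda)$ is best seen coordinate-free. Take $M=A$ as a vector space with $v_i=x^{(i+1)}$. The formulas \eqref{eq:explicit-module-left}--\eqref{eq:explicit-module-right} then read
\[
 m\circ y = m(\delta(y)+by),\qquad y\circ m = y\bigl(\delta(m)+\lambda\,\delta(m')\bigr)\ \text{suitably rewritten.}
\]
More precisely, the left term $\lambda\binom{i+j+2}{i+1}v_{i+j+1}$ is $\lambda\, x^{(i+1)}x^{(j+1)}$, so
\[
 y\circ m = y\,\delta(m)+\lambda\, y m .
\]
The first step is to verify the Novikov bimodule axioms directly in this form. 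These are left-symmetry of the associator and right commutativity $(y\circ z)\circ m=(y\circ m)\circ z$, etc. Both reduce to commutativity of $A$ and the Leibniz rule for $\delta$, exactly as in the proof that $A(\delta,b)$ is Novikov. Nontriviality is clear since $v_{-1}\circ y_{-1}=v_{-1}+\dots\neq0$.

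For irreducibility, let $0\neq W\subseteq M$ be a sub-bimodule. The right action of $y_{-1}=1$ is $m\mapsto \delta(m)+bm$. Since $b$ lies in the augmentation ideal $\mathfrak m$ except for the term $\beta x^{(p^n-1)}$, this operator is a nilpotent-like lowering perturbation. The right actions of $y_j$ give multiplication by $\delta(y)+by$, which generate all of $A$ as $y$ varies (take $y=x^{(1)}$ to get $1+bx$, a unit, etc.).

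So $W$ is closed under multiplication by $A$, i.e. $W$ is an ideal of $A$. It is also closed under $\delta+b$, hence under $\delta$ modulo multiplication. An ideal stable under $\delta$ in the $\delta$-simple algebra $A$ is $0$ or $A$. Thus $W=M$.

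For the classification, start from an arbitrary nontrivial irreducible bimodule $V$. Form the split null extension $N\oplus V$, which is a Novikov algebra. Apply the structural input used for Theorem~A: the right multiplications $R_y$ on a Novikov bimodule commute, and $y\mapsto R_y$ gives $V$ a module structure over the commutative associative algebra $(N,\ast)$. Here $\ast$ is the Novikov--Poisson product recovering $A$ (Corollary on Novikov--Poisson structures, or directly $R_{x\circ y}$ relations). One shows $V$ is a unital $A$-module on which the left action of $1$ acts as $\delta_V+\lambda$ for a connection-like operator $\delta_V$ satisfying Leibniz with respect to $\delta$.

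Thus $V$ is a finite-dimensional $A$-module with a compatible derivation, i.e. a $\delta$-$D$-module over $A$. Irreducibility then forces $V\cong A\otimes V_0$. Since $\mathfrak m$ is nilpotent, a $\delta$-stable submodule argument gives $V_0=V/\mathfrak m V$, and then one-dimensionality of $V_0$ by irreducibility. The scalar by which the left action of $1$ acts on $V_0$ is $\lambda$. Finally, an isomorphism $\mathcal{M}(\lambda)\to\mathcal{M}(\lambda')$ intertwines the left action of $1$, which has unique eigenvalue $\lambda$ (it equals $\delta+\lambda$ with $\delta$ nilpotent). Hence $\lambda=\lambda'$, proving (ii).

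The main obstacle is the structural step in the classification: proving that the right actions on an arbitrary irreducible bimodule factor through the commutative algebra $A$ with the correct twist $b$. The twist is not visible inside $N$ alone. I would try first to show that $R_{y_{-1}}$ is invertible on $V$ (using simplicity of $N$ and nontriviality). I would then define $a\cdot v := R_{y_{-1}}^{-1}R_{a\circ ?}$ via the identity $R_{x\circ y}=R_yR_x$-type relations, and check it is an $A$-action.
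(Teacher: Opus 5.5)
Your verification of the bimodule axioms, the irreducibility of $\mathcal M(\alpha,\beta,\lambda)$, and part (ii) follow the paper. There is one slip: on $\mathcal M(\alpha,\beta,\lambda)$ the right action of $1$ is multiplication by $b$, not $\delta+m_b$. The $\delta$-stability of a subbimodule comes from the left action $\ell(1)=\delta+\lambda\Id$.

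The genuine gap is in the converse, at the step you call the main obstacle. You never construct the associative action $\pi:\mathcal A\to\End_\kk(V)$ with $r(x)=\pi(\delta(x)+bx)$, $\ell(x)=\pi(x)\ell(1)$ and $[\ell(1),\pi(a)]=\pi(\delta(a))$, and the route you propose fails.
\begin{itemize}
\item For $n>0$ one has $b_{\alpha,\beta}\in\mathfrak m$. So already on $\mathcal M(\alpha,\beta,\lambda)$ the operator $r(y_{-1})=r(1)$ is multiplication by a nilpotent element. The invertibility you hope to prove is therefore false for every nontrivial irreducible bimodule.
\item $R_{x\circ y}=R_yR_x$ is not an available identity. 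The axioms give only $\ell(x\circ y)=r(y)\ell(x)$ and $[\ell(x),r(y)]=r(x\circ y)-r(y)r(x)$.
\item The Novikov--Poisson corollary does not apply to $\mathcal N(\alpha,\beta)\oplus V$, which is not simple and carries no given Novikov--Poisson structure.
\end{itemize}

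The missing idea is a transfer of relations. Iterating $r(u)\ell(x)=\ell(R_ux)$ gives $P(r(u_1),\dots,r(u_m))\ell(x)=\ell\bigl(P(R_{u_1},\dots,R_{u_m})x\bigr)$. Since $R_u=m_{\delta(u)+bu}$, every polynomial relation among the elements $\delta(u)+bu$ of $\mathcal A$ then holds among the $r(u)$, provided $\sum_x\ell(x)V=V$. That last fact holds for two reasons. First, $\sum_x\ell(x)V$ is a subbimodule. Second, $\ell=0$ would make $r$ a nonzero homomorphism into a commutative algebra, injective by simplicity, which contradicts $1\circ x^{(1)}-x^{(1)}\circ 1=1$.

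Two further points need repair.
\begin{itemize}
\item You assert that $V$ is finite-dimensional, but the statement covers arbitrary irreducible bimodules, so this must be proved. The paper picks $0\ne v$ with $\mathfrak m v=0$ and shows $V=\kk[T]v$ for $T=\ell(1)$. It then uses that $T^{p^n}$ commutes with $\pi(\mathcal A)$, so $T^{p^n}V$ would be a proper nonzero subbimodule if $V$ were infinite-dimensional.
\item Your descent step is not well defined. $T$ does not preserve $\mathfrak mV$, because $T\pi(a)=\pi(a)T+\pi(\delta(a))$ and $\delta(x)=1$. So there is no induced action of $\ell(1)$ on $V/\mathfrak mV$ whose scalar could be $\lambda$.
\end{itemize}
The simple replacement is to take an eigenvector $Tw=\lambda w$. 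Then $T(aw)=(\delta(a)+\lambda a)w$, so $\mathcal Aw$ is a nonzero subbimodule and hence equals $V$. The map $a\mapsto aw$ is injective by $\delta$-simplicity, which gives $V\cong\mathcal M(\alpha,\beta,\lambda)$.
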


For Theorem~B, we use polynomial relations among right multiplication
operators to recover an action of $\OO(1;n)$ on every nontrivial
irreducible bimodule, following~\cite{Osborn1995modules,Xu1996,Xu2001}.
After proving finite-dimensionality, we show that an eigenvector of
the left action of $1$ generates the bimodule freely over $\OO(1;n)$.
This gives $\mathcal{M}(\alpha,\beta,\lambda)$.

Section~\ref{sec:changing-data} recalls the twisted Gelfand--Dorfman
construction and the isomorphism criterion.
Section~\ref{sec:derivation-normalization} normalizes the derivation.
Sections~\ref{sec:residual-orbit} and~\ref{sec:bimodules} prove
Theorems~A and~B, respectively.

\flushbottom
\section{Twisted Gelfand--Dorfman construction and isomorphisms}
\label{sec:changing-data}

We recall divided-power algebras and the twisted Gelfand--Dorfman
construction, then state the structure and isomorphism theorems.

\subsection{Divided-power algebras}
\label{subsec:divided-power-algebras}

We use $\OO(1)$ and $\OO(1;n)$ for the one-variable divided-power
algebras in~\cite[\S3.2]{PremetStrade2006}.
We write $f^{(r)}$ for divided powers and $f^r$ for ordinary powers.
Binomial coefficients are computed in $\mathbb Z$ and then viewed as
elements of $\kk$, with $\binom ab=0$ unless $0\le b\le a$.

\Needspace{11\baselineskip}
\begin{definition}
The \emph{divided-power algebra} in one variable is
\[
 \OO(1)=\bigoplus_{r\ge0}\kk x^{(r)},
 \qquad x^{(0)}=1,\quad x=x^{(1)},
\]
with multiplication
\begin{equation}
 x^{(r)}x^{(s)}=\binom{r+s}{r}x^{(r+s)}
 \qquad(r,s\ge0).
 \label{eq:divided-power-product}
\end{equation}
For $n\ge1$, the \emph{truncated divided-power algebra in one variable
of height $n$} is
\begin{equation}
 \OO(1;n)=\bigoplus_{0\le r<p^n}\kk x^{(r)}.
 \label{eq:finite-divided-power-algebra}
\end{equation}
By convention, $\OO(1;0)=\kk$.
\end{definition}

The product is commutative and associative; associativity follows from
the integer identity
\[
 \binom{r+s}{r}\binom{r+s+t}{r+s}
 =\binom{s+t}{s}\binom{r+s+t}{r}.
\]
To see that~\eqref{eq:finite-divided-power-algebra} is a subalgebra,
recall Lucas's congruence
\begin{equation}
 \binom ab=\prod_{i\ge0}\binom{a_i}{b_i}
 \qquad\text{in }\kk,
 \label{eq:Lucas-congruence}
\end{equation}
where $a_i,b_i$ are the base-$p$ digits of $a,b\ge0$.
If $r,s<p^n\le r+s$, addition in base $p$ has a carry, so
$\binom{r+s}{r}=0$ in $\kk$.  Thus $\OO(1;n)$ is a unital
subalgebra of $\OO(1)$ of dimension $p^n$, and products of its basis
elements vanish when their indices add to at least $p^n$.
We set $x^{(t)}=0$ for $t<0$.

Following~\cite[\S2.1]{Premet2015}, write
\[
 \OO_n=\kk[X_1,\ldots,X_n]/(X_1^p,\ldots,X_n^p),
 \qquad \OO_0=\kk,
\]
and let $x_i$ denote the residue class of $X_i$.
For $n>0$, Lucas's congruence gives
\begin{equation}
 \bigl(x^{(p^{i-1})}\bigr)^j=j!\,x^{(jp^{i-1})}
 \quad(1\le i\le n,\ 0\le j<p),
 \qquad \bigl(x^{(p^{i-1})}\bigr)^p=0.
 \label{eq:coordinate-powers}
\end{equation}
Hence the assignment
\begin{equation}
 \OO_n\xrightarrow{\ \sim\ }\OO(1;n),
 \qquad x_i\longmapsto x^{(p^{i-1})}
 \label{eq:polynomial-presentation}
\end{equation}
is an isomorphism of commutative associative algebras: it maps the
monomial basis of $\OO_n$ to nonzero scalar multiples of the
divided-power basis of $\OO(1;n)$.
We identify these algebras, so that $x_i=x^{(p^{i-1})}$.
In these polynomial coordinates,
\begin{equation}
 x^{(r)}=\prod_{i=0}^{n-1}\frac{x_{i+1}^{r_i}}{r_i!},
 \qquad r=\sum_{i=0}^{n-1}r_ip^i,
 \qquad 0\le r<p^n,\quad 0\le r_i<p.
 \label{eq:divided-power-digits}
\end{equation}
For $0\le r\le n$, we identify $\OO(1;r)$ with the subalgebra
of $\OO(1;n)$ generated by $x_1,\ldots,x_r$.

For $f=\sum_j a_jx^{(j)}$, write $[x^{(j)}]f=a_j$ and $f(0)=a_0$.
The algebra $\OO(1;n)$ is local with maximal ideal
\[
 \mathfrak m=\{f\in\OO(1;n):f(0)=0\}
 =\operatorname{span}_\kk\{x^{(j)}:1\le j<p^n\}
 =(x_1,\ldots,x_n),
\]
where $\mathfrak m=0$ if $n=0$.
The polynomial presentation and the Frobenius identity give
\begin{equation}
 f^p=f(0)^p1,
 \qquad
 f\in\OO(1;n)^\times\ \Longleftrightarrow\ f(0)\ne0.
 \label{eq:divided-power-units}
\end{equation}
Indeed, if $f(0)=0$, then $f$ is nilpotent; otherwise
$f^{-1}=f(0)^{-p}f^{p-1}$.

The \emph{standard divided-power grading} assigns degree $j$ to
$x^{(j)}$, so $\deg x_i=p^{i-1}$.
For fixed $n$ and every integer $s\ge0$, put
\begin{equation}
 F^s=\operatorname{span}_\kk\{x^{(j)}:s\le j<p^n\}.
 \label{eq:divided-power-filtration}
\end{equation}
These subspaces form the standard divided-power filtration.
We have $F^0=\OO(1;n)$, $F^1=\mathfrak m$, $F^iF^j\subseteq F^{i+j}$
for $i,j\ge0$, and $F^s=0$ for $s\ge p^n$.

The \emph{lowering derivation} on $\OO(1)$ is defined by
\begin{equation}
 \delta(1)=0,\qquad \delta(x^{(r)})=x^{(r-1)}\quad(r\ge1).
 \label{eq:lowering-derivation}
\end{equation}
Pascal's identity proves the Leibniz rule: for $r,s\ge0$ with $r+s>0$,
\[
 \delta(x^{(r)})x^{(s)}+x^{(r)}\delta(x^{(s)})
 =\left(\binom{r+s-1}{r-1}+\binom{r+s-1}{r}\right)x^{(r+s-1)}
 =\delta(x^{(r)}x^{(s)}).
\]
The derivation preserves every $\OO(1;n)$, and we denote its restriction
by the same symbol.  In particular, $\delta=0$ on $\OO(1;0)$.
Its iterates satisfy
\begin{equation}
 \delta^j(x^{(r)})=
 \begin{cases}
  x^{(r-j)},&0\le j\le r,\\
  0,&j>r.
 \end{cases}
 \label{eq:lowering-iterates}
\end{equation}
Thus $\ker\delta=\kk1$ in $\OO(1)$, and on $\OO(1;n)$
the operator $\delta$ has a single nilpotent Jordan block of size $p^n$.
For a derivation $\partial$ of a commutative associative algebra
$\mathcal A$, we call $\mathcal A$ \emph{$\partial$-simple} if it has
no nonzero proper $\partial$-stable ideal.
If a nonzero $\delta$-stable ideal of $\OO(1;n)$ contains
$f=\sum_{r=0}^s c_rx^{(r)}$ with $c_s\ne0$, it contains
$\delta^s(f)=c_s1$.  Hence $\OO(1;n)$ is $\delta$-simple.

For $n>0$, let $\partial_i=\partial/\partial x_i$ be the coordinate
derivations, so that $\partial_i(x_j)=\delta_{ij}$.
The relations $x_j^p=0$ are preserved because $p=0$ in $\kk$.
Every derivation is uniquely determined by its values on the $x_i$,
and therefore
\[
 \Der(\OO(1;n))=\bigoplus_{i=1}^n\OO(1;n)\partial_i.
\]
Formula~\eqref{eq:divided-power-digits} and $(p-1)!=-1$ in $\kk$ give
\begin{equation}
 \delta(x_i)=x^{(p^{i-1}-1)}
 =(-1)^{i-1}\prod_{j<i}x_j^{p-1},
 \qquad
 \delta=\sum_{i=1}^n(-1)^{i-1}
 \left(\prod_{j<i}x_j^{p-1}\right)\partial_i,
 \label{eq:lowering-polynomial-coordinates}
\end{equation}
where an empty product is $1$.
Set $\Delta_0=0$ and, for $1\le r\le n$, define
\begin{equation}
 \Delta_r
 =\partial_1+x_1^{p-1}\partial_2+\cdots+
   x_1^{p-1}\cdots x_{r-1}^{p-1}\partial_r.
 \label{eq:Albert-r}
\end{equation}
This derivation acts trivially on $x_{r+1},\ldots,x_n$; for $r=n$
it is the Albert derivation~\cite{Albert1953,Block1969}.
Define $\tau_r\in\Aut(\OO(1;r))$ by $\tau_r(x_i)=(-1)^{i-1}x_i$.
Since $p$ is odd, $\tau_r(x_j^{p-1})=x_j^{p-1}$, so on $\OO(1;r)$
\begin{equation}
 \tau_r\delta\tau_r^{-1}=\Delta_r.
 \label{eq:albert-lowering-conjugacy}
\end{equation}
Consequently, $\Delta_r$ acts on $\OO(1;r)$ as a single nilpotent
Jordan block of size $p^r$, and $\OO(1;r)$ is $\Delta_r$-simple.

The \emph{divided-power maps} $f\mapsto f^{(r)}$ on
$\bigoplus_{m\ge1}\kk x^{(m)}$ are determined by
\begin{equation}
 \begin{gathered}
  \bigl(x^{(m)}\bigr)^{(r)}
  =\frac{(rm)!}{r!(m!)^r}x^{(rm)}\quad(m\ge1),\\
  (cf)^{(r)}=c^rf^{(r)},\qquad
  (f+g)^{(r)}=\sum_{i=0}^r f^{(i)}g^{(r-i)}
  \quad(r\ge0).
 \end{gathered}
 \label{eq:divided-powers-of-elements}
\end{equation}
Here $f,g$ have zero constant coefficient and $c\in\kk$.
The factorial quotient is an integer: it counts partitions of an
$rm$-element set into $r$ unordered blocks of size $m$.
It is evaluated in $\mathbb Z$ before reduction to $\kk$.
The sum and scalar rules extend the monomial formula to every finite
linear combination, with $f^{(0)}=1$ and $f^{(1)}=f$.
For $0\le r<p$, this gives $f^{(r)}=f^r/r!$.

\begin{lemma}
\label{lem:divided-power-identities}
For $f,g\in\OO(1)$ with zero constant coefficient and integers
$r,s\ge0$,
\[
 f^{(r)}f^{(s)}=\binom{r+s}{r}f^{(r+s)},
 \qquad
 (fg)^{(s)}=f^sg^{(s)},
\]
and
\begin{equation}
 \delta\bigl(f^{(r)}\bigr)=\delta(f)f^{(r-1)}\qquad(r\ge1).
 \label{eq:full-divided-power-chain-rule}
\end{equation}
\end{lemma}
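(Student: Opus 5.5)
The plan is to reduce everything to monomials $f=x^{(m)}$ by using the defining rules \eqref{eq:divided-powers-of-elements}, and then to pass to general $f,g$ using the sum and scalar rules. Throughout I work in characteristic zero first, or over $\mathbb Z$, via the standard lift: $\OO(1)$ is the reduction of the $\mathbb Z$-form spanned by the $x^{(r)}$ inside $\mathbb Q[x]$, with $x^{(r)}=x^r/r!$. All structure constants in the definitions are integers. Over $\mathbb Q$, for $f\in x\mathbb Q[x]$ one has $f^{(r)}=f^r/r!$, and this agrees with \eqref{eq:divided-powers-of-elements}. Indeed, $(x^{(m)})^r/r!=x^{rm}/(r!(m!)^r)=\frac{(rm)!}{r!(m!)^r}x^{(rm)}$, and $(f+g)^r/r!=\sum f^i g^{r-i}/(i!(r-i)!)$. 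So both define the same integral operation on the $\mathbb Z$-form of the augmentation ideal. Uniqueness of the extension holds because the sum and scalar rules determine $f^{(r)}$ from the values on monomials.

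Granting this, each identity becomes a polynomial identity in the integer coefficients of $f$ and $g$, which can be checked over $\mathbb Q$.
\begin{itemize}
\item The first identity reads $\frac{f^r}{r!}\frac{f^s}{s!}=\binom{r+s}{r}\frac{f^{r+s}}{(r+s)!}$.
\item The second reads $\frac{(fg)^s}{s!}=f^s\frac{g^s}{s!}$.
\item The third follows from the ordinary chain rule, since $\delta=d/dx$ on $\mathbb Q[x]$ sends $x^r/r!\mapsto x^{r-1}/(r-1)!$. This gives $\delta(f^r/r!)=\delta(f)f^{r-1}/(r-1)!$.
\end{itemize}
Each identity has both sides given by integer-coefficient polynomial expressions in the coordinates of $f$ and $g$ in the basis $x^{(j)}$. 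Hence it holds over $\mathbb Z$, and therefore after base change to $\kk$. One small point: in the second identity $f$ need not have zero constant term. There, $f^s$ is an ordinary power and $(fg)^{(s)}$ makes sense because $fg$ has zero constant term whenever $g$ does. The same lifting argument covers this case.

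The main obstacle is justifying the lifting step rigorously. Specifically, I must show that the operation $f\mapsto f^{(r)}$ defined by \eqref{eq:divided-powers-of-elements} coincides with the reduction of $f^r/r!$ computed in the $\mathbb Z$-lattice. This amounts to showing that both operations satisfy the same recursive rules (monomial values, scalar rule, sum rule). The sum rule must be verified for the lifted operation, and there it is just the binomial theorem divided by $r!$. An alternative, if one prefers to avoid lattices, is to prove the three identities directly on monomials using binomial identities. For the first identity the needed fact is
\[
\frac{(rm)!}{r!(m!)^r}\frac{(sm)!}{s!(m!)^s}\binom{(r+s)m}{rm}=\binom{r+s}{r}\frac{((r+s)m)!}{(r+s)!(m!)^{r+s}}.
\]
One would then extend by induction on the number of monomial terms using the sum rule. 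The lattice argument seems cleaner, so I would pursue it first.
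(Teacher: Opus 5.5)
Your proposal is correct and follows essentially the same route as the paper: treat the coefficients of $f,g$ as indeterminates, realize the $\mathbb Z$-form of $\OO(1)$ inside $\mathbb Q[x]$ via $x^{(m)}=x^m/m!$ and $f^{(r)}=f^r/r!$, verify the three identities over $\mathbb Q$, and specialize to $\kk$. Your added check that the rule-defined $f^{(r)}$ coincides with $f^r/r!$ on the lattice (monomial values, scalar rule, sum rule) is exactly the detail the paper's one-line proof leaves implicit.
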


\begin{proof}
Regard the coefficients of $f$ and $g$ as indeterminates over
$\mathbb Z$.  Over $\mathbb Q$, use $x^{(m)}=x^m/m!$ and
$f^{(r)}=f^r/r!$.  All coefficients in the divided-power basis are
integral polynomials in these indeterminates, so the identities remain
valid over $\kk$.
\end{proof}

\begin{remark}
By~\eqref{eq:full-divided-power-chain-rule}, $\delta$ is a
\emph{special derivation}~\cite[\S3.2]{PremetStrade2006}.
We use $\Der(\OO(1;n))$ for all derivations of the associative algebra.
The divided powers $f^{(r)}$ are computed in $\OO(1)$,
even when $f\in\OO(1;n)$.  They need not lie in $\OO(1;n)$:
for example, $x^{(p)}\ne0$ in $\OO(1)$ whereas $x^p=0$,
and $x^{(p)}\notin\OO(1;1)$.
\end{remark}

\subsection{Simple Novikov algebras and their isomorphisms}
\label{subsec:twisted-GD-construction}

\begin{definition}
\begin{enumerate}[label=\textup{(\roman*)},leftmargin=2em]
\item
A \emph{Novikov algebra} is a vector space $\mathcal{N}$ with a bilinear
product $\circ$ satisfying, for all $x,y,z\in\mathcal{N}$,
\begin{align}
 (x\circ y)\circ z-x\circ(y\circ z)
  &=(y\circ x)\circ z-y\circ(x\circ z),
 \label{eq:left-symmetry}\\
 (x\circ y)\circ z&=(x\circ z)\circ y.
 \label{eq:right-commutativity}
\end{align}
\item
A Novikov algebra $\mathcal{N}$ is \emph{simple} if
$\mathcal{N}\circ\mathcal{N}\ne0$ and $\mathcal{N}$ has no nonzero proper
two-sided ideal.
\item
For a Novikov algebra $\mathcal{N}$, the left and right multiplication
operators are $\Lop_x(y)=x\circ y$ and $\Rop_x(y)=y\circ x$.
\end{enumerate}
\end{definition}

By~\eqref{eq:right-commutativity}, the right multiplications commute. 
Denote by the associative algebra
\[ \mathcal R(\mathcal N):=
 \operatorname{span}_{\kk}
 \{\Rop_{a_1}\cdots\Rop_{a_r}:r\ge1,\ a_i\in\mathcal{N}\}.
\]

\begin{definition}
\label{def:twisted-GD-construction}
Let $\mathcal{A}$ be a commutative associative algebra,
$\partial\in\Der(\mathcal{A})$, and $a\in\mathcal{A}$.
The \emph{twisted Gelfand--Dorfman construction}~\cite{Xu1997,Xu2001}
assigns to $(\mathcal{A},\partial,a)$ the Novikov algebra
$\mathcal{A}(\partial,a)$ on the vector space $\mathcal{A}$ with product
\begin{equation}
 x\circ y=x\bigl(\partial(y)+ay\bigr)
 \qquad(x,y\in\mathcal{A}).
 \label{eq:GD-product}
\end{equation}
For $a=0$, this is the Gelfand--Dorfman construction~\cite{GD1979}.
\end{definition}

\begin{theorem}[Zhelyabin and Zakharov {\cite[Theorem~3, Corollary~1]{ZZ2024}}]
\label{thm:structural-reduction}
Every finite-dimensional simple Novikov algebra $\mathcal{N}$ is
isomorphic to $\OO(1;n)(\partial,a)$ for some $n\ge0$,
$\partial\in\Der(\OO(1;n))$, and $a\in\OO(1;n)$, where
$\OO(1;n)$ is $\partial$-simple.
The associative algebra 
$\mathcal R(\mathcal{N})$ is also isomorphic to $\OO(1;n)$.
\end{theorem}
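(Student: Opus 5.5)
The plan is to build the commutative associative algebra and the derivation directly from the right multiplications, and then invoke Block's theorem on differentially simple algebras.

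\emph{Step 1: an action of the $\Lop_x$ on $\mathcal R(\mathcal N)$.} Put $A=\mathcal R(\mathcal N)$. By \eqref{eq:right-commutativity} the operators $\Rop_z$ commute, so $A$ is a finite-dimensional commutative associative algebra. It is nonzero because $\mathcal N\circ\mathcal N\neq0$. Rewriting \eqref{eq:left-symmetry} in operator form, using \eqref{eq:right-commutativity} to turn $(y\circ x)\circ z$ into $(y\circ z)\circ x$, gives
\[
 [\Rop_z,\Lop_x]=\Rop_z\Rop_x-\Rop_{x\circ z}.
\]
Hence $D_x:=[\,\cdot\,,\Lop_x]$ maps $A$ into $A$. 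Since $D_x$ is the restriction of an inner derivation of $\End(\mathcal N)$, it is a derivation of $A$.

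\emph{Step 2: transferring simplicity.} If $I\subseteq A$ is an ideal stable under all $D_x$, then $I\mathcal N$ is stable under every $\Rop_z$ and every $\Lop_x$, so it is a two-sided ideal of $\mathcal N$. Simplicity forces $I\mathcal N=0$ or $I\mathcal N=\mathcal N$. Next we show that $A$ acts faithfully on $\mathcal N$ and that $A\mathcal N=\mathcal N$. For this we use that $\operatorname{Ann}_{\mathcal N}(A)$ and $A\mathcal N=\mathcal N\circ\mathcal N$ are ideals, so they equal $0$ and $\mathcal N$ respectively. A Fitting-decomposition argument then puts $\Id$ in $A$ and shows that $A$ is differentially simple with respect to the family $\{D_x\}$.

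\emph{Step 3: Block's theorem and the single derivation.} Block's theorem gives $A\cong\OO_n\cong\OO(1;n)$, the isomorphism \eqref{eq:polynomial-presentation} applied to a differentially simple finite-dimensional commutative algebra in characteristic $p$. The derivations $D_x$ span an $A$-submodule of $\Der(A)$. Indeed, from the displayed identity,
\[
 D_x(\Rop_z)=\Rop_{x\circ z}-\Rop_z\Rop_x,
\]
and replacing $x$ by $y\circ u$ lets one show that the $D_x$ are $A$-multiples of one of them. So there is a single $\partial$, with $D_x=f_x\partial$, under which $A$ is still differentially simple.

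\emph{Step 4: rank one and the product.} We show that $\mathcal N$ is a free $A$-module of rank one. Choose $e$ so that $A\to\mathcal N$, $f\mapsto fe$, is injective; such $e$ exists since $A$ is local and $\mathcal N$ is faithful, so take $e$ not killed by the socle. Surjectivity follows because $Ae$ is a nonzero ideal. Transporting the product through this map and using Step 1 yields
\[
 (fe)\circ(ge)=f\bigl(\partial(g)+ag\bigr)e
\]
for some $a\in A$, which is exactly $\OO(1;n)(\partial,a)$.

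The main obstacle is Step 3 together with the rank-one claim: reducing the family $\{D_x\}$ to a single derivation that keeps $A$ differentially simple, and proving $\dim\mathcal N=\dim A$. For this I would first study the linear map $\mathcal N\to\Der(A)$, $x\mapsto D_x$, and compare dimensions via $\dim\mathcal N\le\dim A$ from a cyclic vector.
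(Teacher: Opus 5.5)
\textbf{Verdict: there is a genuine gap.} Steps 1 and 2 and the appeal to Block's theorem are fine, apart from a sign slip: with $D_x=[\,\cdot\,,\Lop_x]$ one gets $D_x(\Rop_z)=\Rop_z\Rop_x-\Rop_{x\circ z}$. What is missing is the actual content of the theorem, namely that $\mathcal N$ is free of rank one over $A=\mathcal R(\mathcal N)$. The paper does not prove this statement; it imports it from Zhelyabin--Zakharov, so your argument must supply this step itself.

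In Step 4, $Ae$ is stable under the right multiplications. For left multiplications, however,
\[
 x\circ(fe)=f\,(x\circ e)-D_x(f)\,e ,
\]
and nothing forces $x\circ e=\Rop_e x$ into $Ae$. So the claim that $Ae$ is a nonzero ideal is unjustified, and surjectivity of $f\mapsto fe$ is unproved. Step 3 depends on the same point. Substituting $y\circ u$ for $x$ gives $\Lop_{y\circ u}=\Rop_u\Lop_y$, hence $D_{y\circ u}=\Rop_uD_y$. This only shows that $x\mapsto D_x$ is $A$-linear. The $D_x$ are $A$-multiples of a single one exactly when $\mathcal N$ is cyclic over $A$. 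Your fallback, bounding $\dim\mathcal N$ via a cyclic vector, assumes the very cyclic vector that is missing.

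\textbf{How to fix it.} Compute $x\circ(Ty)$ in two ways. Put $\partial_x=-D_x$. Then $\Lop_x(Ty)=T\Lop_xy+\partial_x(T)y$, while $\Lop_x(Ty)=\Rop_{Ty}x$ and $\Lop_xy=\Rop_yx$. Hence
\[
 (\Rop_{Ty}-T\Rop_y)\,x=\partial_x(T)\,y\qquad(x,y\in\mathcal N,\ T\in A).
\]
\emph{Case $A\ne\kk$.}
\begin{enumerate}[label=\textup{(\arabic*)},leftmargin=2em]
\item $A$ is local, and some $\partial_{x_0}(T)$ is a unit. Otherwise $\partial_x(A)\subseteq\mathfrak m$ for all $x$, and $\mathfrak m$ would be a nonzero proper $\{D_x\}$-stable ideal.
\item Taking $x=x_0$ in the identity gives $y\in Ax_0$ for every $y$, so $\mathcal N=Ax_0$.
\item The map $f\mapsto fx_0$ is injective, because its kernel annihilates $Ax_0=\mathcal N$ and $A\subseteq\End(\mathcal N)$.
\item From $D_{fx_0}=fD_{x_0}$, every $D_{x_0}$-stable ideal is $\{D_x\}$-stable, so $A$ is $D_{x_0}$-simple.
\item Writing $x_0\circ x_0=ax_0$, your Step 4 computation gives $\mathcal N\cong A(-D_{x_0},a)$.
\end{enumerate}
\emph{Case $A=\kk$.} Here $x\circ y=\rho(y)x$ for a linear form $\rho$. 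Then $\ker\rho$ is an ideal, and simplicity forces $\dim\mathcal N=1$.

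With this lemma inserted, your argument is complete, and the socle argument in Step 4 becomes unnecessary.
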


\begin{lemma}
\label{lem:normalized-simple}
Let $n\ge0$ and $b\in \mathcal{A}=\OO(1;n)$, with $b=1$ if $n=0$.
The algebra $\mathcal{N}=\mathcal{A}(\delta,b)$ is simple, and $\mathcal R(\mathcal{N})$ consists
of all operators $m_c:x\mapsto cx$, $c\in\mathcal{A}$.
\end{lemma}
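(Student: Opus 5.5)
The proof will first identify $\mathcal R(\mathcal N)$ and then use this description to show that every nonzero ideal is all of $\mathcal N$. The case $n=0$ is immediate. Then $\mathcal N=\kk$ with product $x\circ y=bxy=xy$, so $\mathcal N\circ\mathcal N\ne0$, there are no nonzero proper subspaces, and $\Rop_y=m_y$. Assume $n\ge1$ from now on.

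\textbf{Step 1: $\mathcal R(\mathcal N)=\{m_c:c\in\mathcal A\}$.}
By \eqref{eq:GD-product}, $\Rop_y=m_{\delta(y)+by}$. Hence every product of right multiplications is a multiplication operator, and $\mathcal R(\mathcal N)\subseteq\{m_c\}$.

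For the reverse inclusion, consider the linear map $\phi:\mathcal A\to\mathcal A$, $y\mapsto\delta(y)+by$. Write $b=b_0+b'$ with $b_0=b(0)$ and $b'\in\mathfrak m$.

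\emph{Case $b_0=0$.} Then $\phi(x^{(r)})=x^{(r-1)}+(\text{terms in }F^{r})$. So $\phi$ maps $F^r$ onto $F^{r-1}$ modulo $F^r$, and the images $\phi(x^{(r)})$ for $r\ge1$ have leading terms $x^{(r-1)}$. These images are therefore linearly independent and span a space of dimension $p^n-1$. In particular $\phi(x)=1+(\text{higher terms})$ is a unit, and $m_{\phi(x)}=\Rop_x\in\mathcal R(\mathcal N)$. Since $\mathcal R(\mathcal N)$ is a subalgebra of the finite-dimensional algebra $\{m_c\}\cong\mathcal A$, it contains the inverse of each of its invertible elements: by Cayley--Hamilton, $m_u^{-1}$ is a polynomial in $m_u$ without constant term. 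So $m_1\in\mathcal R(\mathcal N)$. The image of $\phi$ already has codimension $\le1$ and contains a unit, so adding $1$ should give everything. More directly, $\mathcal R(\mathcal N)$ is an ideal-closed subalgebra containing $m_1$ and $m_{\phi(y)}$ for all $y$, and the span of the $\phi(y)$ together with $1$ is all of $\mathcal A$.

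\emph{Case $b_0\ne0$.} Then $\phi=b_0\Id+(\text{nilpotent, filtration-lowering})$ modulo terms that preserve $F^r$. In fact $\phi$ is triangular with respect to the basis ordered by the grading, with diagonal $b_0$: $\delta$ lowers degree and $b'$ raises it, so I would argue via the associated graded. If that triangularity argument is awkward, the fallback is to observe that $m_b$ and $\delta$ are combined and instead use that $\phi(1)=b$ is a unit. Then $m_1\in\mathcal R(\mathcal N)$, and $\phi(y)-by=\delta(y)$, i.e.\ $m_{\delta(y)}=\Rop_y-m_bm_y$. Induct downward on the filtration, starting from $\mathcal R(\mathcal N)\ni m_1$. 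If $m_{F^{s}}\subseteq\mathcal R(\mathcal N)$ is known for large $s$ (trivially for $s\ge p^n$), take $y=x^{(s)}$. Then $m_{x^{(s-1)}}=\Rop_{x^{(s)}}-m_bm_{x^{(s)}}$, and $m_bm_{x^{(s)}}\in\mathcal R(\mathcal N)$ by the induction hypothesis. This downward induction actually works uniformly in both cases once $m_1$ (hence $m_b$ via $\Rop_1=m_b$) is available, and in case $b_0=0$ the element $m_1$ was obtained above. This inductive step is the main technical point.

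\textbf{Step 2: simplicity.}
Since $\mathcal R(\mathcal N)$ consists of all $m_c$, any right ideal $I$, closed under all $\Rop_y$, is an ideal of $\mathcal A$. A two-sided ideal is also stable under $\Lop_1$, where $\Lop_1(y)=\delta(y)+by$. Combined with $b y\in I$, this gives $\delta(I)\subseteq I$. So $I$ is a $\delta$-stable ideal of $\mathcal A$, and $\delta$-simplicity of $\OO(1;n)$ gives $I=0$ or $I=\mathcal A$. Finally, $\mathcal N\circ\mathcal N\ne0$ because $1\circ x=\phi(x)\neq 0$.
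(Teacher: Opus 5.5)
Your $n=0$ case and Step~2 (simplicity) are fine and match the paper. Step~1 has a genuine gap: nothing in it produces the top-degree operator $m_{x^{(p^n-1)}}$.

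\textbf{Why the induction does not start.} Passing from $F^{p^n}=0$ to $F^{p^n-1}$ would require $y=x^{(p^n)}$, which is $0$ in $\OO(1;n)$. Each step $m_{x^{(s-1)}}=\Rop_{x^{(s)}}-m_{bx^{(s)}}$ with $1\le s<p^n$ only reaches degrees $\le p^n-2$.

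\textbf{Why the span argument fails.} Your claim that the span of the elements $\delta(y)+by$ together with $1$ is all of $\mathcal A$ is false in general. For $b=0$, which is allowed when $n\ge1$, that span is $\delta(\mathcal A)=\operatorname{span}\{x^{(0)},\dots,x^{(p^n-2)}\}$, and it already contains $1$. So linear combinations of single right multiplications, even with $m_1$ adjoined, can miss $m_{x^{(p^n-1)}}$. You must use a genuine product of right multiplications. Separately, in your case $b(0)\ne0$, the map $y\mapsto\delta(y)+by$ is not triangular and can be singular. For example, $b=(1+x)^{-1}$ sends $(1+x)^{-1}$ to $0$.

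\textbf{The missing step.} The paper supplies it. Since $p^n\ge3$,
\[
 \Rop_{x^{(2)}}\Rop_{x^{(p^n-1)}}
 =m_{(x+bx^{(2)})(x^{(p^n-2)}+bx^{(p^n-1)})}
 =m_{x\,x^{(p^n-2)}}
 =-m_{x^{(p^n-1)}},
\]
because every term involving $b$ has degree at least $p^n$ and $\binom{p^n-1}{1}=-1$ in $\kk$. With this as the base case, your downward induction works for every $b$. It even yields $m_1$ at the last step, since $\Rop_x=m_1+m_{bx}$. The Cayley--Hamilton detour and the case split on $b(0)$ then become unnecessary. This is exactly the paper's argument: the elements $D(x^{(r)})$ for $1\le r<p^n$, together with $D(x^{(2)})D(x^{(p^n-1)})$, have distinct lowest degrees and so form a basis of $\mathcal A$.
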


\begin{proof}
For $n=0$, the algebra is $\kk$ and $R_1=\Id$.  Suppose $n>0$,
and put $q=p^n$ and $D=\delta+m_b$.
Since $R_y=m_{D(y)}$ and $m_cm_d=m_{cd}$, for the last assertion, it suffices to show that
$D(\mathcal{A})={\rm span}\{  D(x)\mid x\in \mathcal A\}$ generates $\mathcal{A}$ as an associative algebra.
For $1\le r<q$, the element
\[
 D(x^{(r)})=x^{(r-1)}+bx^{(r)}
\]
has lowest-degree term $x^{(r-1)}$.
To obtain the highest-degree element, we use $q\ge3$ and
\[
 D(x^{(2)})D(x^{(q-1)})
 =x\,x^{(q-2)}=-x^{(q-1)},
\]
since all terms containing $b$ have degree at least $q$ and vanish,
and $\binom{q-1}{1}=-1$ in $\kk$.
These $q$ elements have distinct lowest degrees, so they form a basis
of $\mathcal{A}$.

Let $I$ be a Novikov ideal of $\mathcal{N}$.
It is stable under every $m_c$, so it is an associative ideal
of $\mathcal{A}$.  It is stable under $D=L_1$ and therefore under
$\delta=D-m_b$.  Since $\mathcal{A}$ is $\delta$-simple, $I=0$ or $I=\mathcal{A}$.
Finally, $1\circ x=1+bx\ne0$, so $\mathcal{N}$ is simple.
\end{proof}

\Needspace{13\baselineskip}
\begin{theorem}[Pozhidaev, Zakharov and Zhelyabin {\cite[Theorem~3.3, Corollary~3.1]{PZZ2025}}]
\label{thm:change-data}
Let $\mathcal{A}(\partial,a)$ and $\mathcal{A}'(\partial',a')$ be finite-dimensional simple
Novikov algebras, where $\mathcal{A}$ and $\mathcal{A}'$ are unital commutative
associative algebras.
Every Novikov algebra isomorphism
$\phi:\mathcal{A}(\partial,a)\to\mathcal{A}'(\partial',a')$
has a unique expression
\begin{equation}
 \phi(x)=h^{-1}\sigma(x)\qquad(x\in\mathcal{A}),
 \label{eq:all-isomorphisms-form}
\end{equation}
where $\sigma:\mathcal{A}\to \mathcal{A}'$ is an associative algebra isomorphism and
$h\in(\mathcal{A}')^\times$.  With $\partial^\sigma=\sigma \partial\sigma^{-1}$, the data satisfy
\begin{equation}
 \partial'=h \partial^\sigma,
 \qquad a'=\partial^\sigma(h)+h\sigma(a).
 \label{eq:all-isomorphisms-data}
\end{equation}
Conversely, any $\sigma$ and $h$ satisfying~\eqref{eq:all-isomorphisms-data} give an
isomorphism by~\eqref{eq:all-isomorphisms-form}.
\end{theorem}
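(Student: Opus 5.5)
The plan is to reconstruct $\sigma$ and $h$ from $\phi$ by transporting the right multiplication algebras, and then to read off the compatibility relations from the product formula. In $\mathcal N=\mathcal A(\partial,a)$ we have $R_y=m_{D(y)}$ with $D=\partial+m_a$. So $\mathcal R(\mathcal N)$ is contained in the algebra $m_{\mathcal A}\cong\mathcal A$ of multiplication operators. Simplicity should force equality: the span of products of elements of $D(\mathcal A)$ is an ideal. If it were proper, I would argue that $\mathcal A\cdot\mathcal R$-type images give a proper Novikov ideal. Concretely, $\mathcal A\, D(\mathcal A)\cdots$ is stable under left and right multiplications. I would check this via the Leibniz rule, as in the proof of Lemma~\ref{lem:normalized-simple}, using unitality of $\mathcal A$. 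I expect this step, namely $\mathcal R(\mathcal N)=m_{\mathcal A}$ in full generality, to be the main obstacle. Theorem~\ref{thm:structural-reduction} gives it when $\mathcal A\cong\OO(1;n)$, and I would fall back on that case if the general argument resists.

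Granting $\mathcal R(\mathcal N)=m_{\mathcal A}$ and $\mathcal R(\mathcal N')=m_{\mathcal A'}$, the isomorphism $\phi$ intertwines right multiplications: $\phi R_y\phi^{-1}=R'_{\phi(y)}$. Hence conjugation by $\phi$ gives an associative isomorphism $m_{\mathcal A}\to m_{\mathcal A'}$. Call the induced map $\sigma:\mathcal A\to\mathcal A'$; it is defined by $\phi m_c\phi^{-1}=m_{\sigma(c)}$. Applying this to $1$ gives $\phi(cx)=\sigma(c)\phi(x)$. Putting $x=1$ yields $\phi(c)=\sigma(c)\phi(1)$. Since $\phi$ is bijective, $\phi(1)$ must be a unit, because $\phi(\mathcal A)=\mathcal A'\phi(1)$. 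Setting $h=\phi(1)^{-1}$ gives the form~\eqref{eq:all-isomorphisms-form}. Uniqueness is immediate: from $\phi(x)=h^{-1}\sigma(x)$ we get $h^{-1}=\phi(1)$, and then $\sigma(x)=h\phi(x)$.

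For the data relations, I would expand $\phi(x\circ y)=\phi(x)\circ'\phi(y)$. The left side is $h^{-1}\sigma(x)\bigl(\partial^\sigma(\sigma y)+\sigma(a)\sigma(y)\bigr)$. The right side is
\[
h^{-1}\sigma(x)\bigl(\partial'(h^{-1}\sigma y)+a'h^{-1}\sigma y\bigr).
\]
Cancel $h^{-1}\sigma(x)$, which is allowed by taking $x=1$, and write $z=\sigma(y)$. This gives
\[
\partial^\sigma(z)+\sigma(a)z=h^{-1}\partial'(z)+\bigl(\partial'(h^{-1})+a'h^{-1}\bigr)z
\]
for all $z$. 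Putting $z=1$ gives the zero-order identity
\[
\sigma(a)=\partial'(h^{-1})+a'h^{-1}.
\]
Subtracting it leaves the derivation identity $\partial^\sigma=h^{-1}\partial'$, that is, $\partial'=h\partial^\sigma$.

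Substituting $\partial'=h\partial^\sigma$ turns the zero-order identity into $a'=h\sigma(a)-h^2\partial^\sigma(h^{-1})$. Since $-h^2\partial^\sigma(h^{-1})=\partial^\sigma(h)$, this is $a'=\partial^\sigma(h)+h\sigma(a)$, which is~\eqref{eq:all-isomorphisms-data}. For the converse, the same computation read backwards shows that $\phi(x)=h^{-1}\sigma(x)$ preserves products. It is bijective because $\sigma$ is bijective and $h$ is a unit.
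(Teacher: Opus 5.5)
The paper does not prove this statement; it quotes it from \cite{PZZ2025}, so your argument has to stand on its own. Everything after your first paragraph is correct. The relation $\phi R_y\phi^{-1}=R'_{\phi(y)}$ makes conjugation by $\phi$ an isomorphism $\mathcal R(\mathcal N)\to\mathcal R(\mathcal N')$. The resulting $\sigma$, the unit $h=\phi(1)^{-1}$, uniqueness via $\sigma(1)=1$, the extraction of $\partial'=h\partial^\sigma$ and $a'=\partial^\sigma(h)+h\sigma(a)$ from the case $x=1$, and the converse all check out.

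The genuine gap is the step $\mathcal R(\mathcal N)=m_{\mathcal A}$, and the direct argument you sketch for it cannot work. Here $\mathcal R(\mathcal N)=m_B$, where $B$ is the \emph{subalgebra} generated by $D(\mathcal A)$, and you need $B=\mathcal A$. Your Novikov-ideal argument shows only that the associative \emph{ideal} $\mathcal A D(\mathcal A)$ is a nonzero two-sided ideal of $\mathcal N$, hence equal to $\mathcal A$. That says nothing about $B$: the subspace $\kk1$ also generates $\mathcal A$ as an ideal. Since $\mathcal AB\supseteq\mathcal AD(\mathcal A)=\mathcal A$, no ideal generated by a set containing $D(\mathcal A)$ can be proper, so simplicity used this way yields no contradiction. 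The claim that the span of products of elements of $D(\mathcal A)$ is an ideal is exactly what needs proof.

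Your fallback does close the gap, and not only when $\mathcal A\cong\OO(1;n)$. Theorem~\ref{thm:structural-reduction} gives $\dim\mathcal R(\mathcal N)=p^n=\dim\mathcal N=\dim m_{\mathcal A}$ for \emph{every} presentation $\mathcal N=\mathcal A(\partial,a)$. Together with $\mathcal R(\mathcal N)\subseteq m_{\mathcal A}$, this forces equality; you should state this dimension count explicitly.

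If you want a self-contained argument instead, proceed as follows. The $\partial$-stable ideals of $\mathcal A$ are Novikov ideals, so $\ker\partial=\kk1$. Every $0\ne y\in\ker D$ is a unit, because $\partial(y)=-ay$ makes $\mathcal Ay$ a $\partial$-stable ideal. Two such elements differ by a factor in $\ker\partial$, so $\dim\ker D\le1$. If $\ker D=0$, then $D(\mathcal A)=\mathcal A$ and you are done.

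Otherwise $\ker D=\kk y$ and $D=m_y\partial m_y^{-1}$. Then $B\ne\mathcal A$ would force the codimension-one subspace $y\partial(\mathcal A)$ to equal $B$, hence to be closed under multiplication. Choose $\theta\ne0$ vanishing on $\partial(\mathcal A)$. The radical of $(u,v)\mapsto\theta(uv)$ is a proper $\partial$-stable ideal, hence zero, so $(u,v)\mapsto\theta(yuv)$ is nondegenerate. But closure gives $\theta(y\,\partial(f)\partial(g))=0$, so $\partial(\mathcal A)$ is a totally isotropic subspace of codimension one, and $\dim\mathcal A\le2$.

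That is impossible for a simple algebra. If $\dim\mathcal A=1$, then $\partial=0$, and $\ker D\ne0$ forces $a=0$, so $\mathcal N\circ\mathcal N=0$. If $\dim\mathcal A=2$, then since $p>2$ every derivation preserves a proper ideal, so $\mathcal A$ is not $\partial$-simple.
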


\Needspace{8\baselineskip}
\section{Normalization of derivations}
\label{sec:derivation-normalization}

We reduce the algebras in Theorem~\ref{thm:structural-reduction} to
$\OO(1;n)(\delta,b)$ by coordinate changes and multiplication by units.
We use the polynomial coordinates $x_i$ and the derivations
$\partial_i,\Delta_r$ from Section~\ref{subsec:divided-power-algebras}.

For $\partial\in\Der(\OO(1;n))$, write
\[
 \partial=\partial_{\mathrm{ss}}+\partial_{\mathrm{nil}},
 \qquad [\partial_{\mathrm{ss}},\partial_{\mathrm{nil}}]=0
\]
for its Jordan decomposition as a linear operator.
Both components are derivations.  Indeed, the Leibniz rule gives
\[
 \bigl(\partial-(\lambda+\mu)\Id\bigr)^m(fg)
 =\sum_{i=0}^m\binom mi
   (\partial-\lambda\Id)^i(f)
   (\partial-\mu\Id)^{m-i}(g).
\]
If $(\partial-\lambda\Id)^s(f)=0$ and
$(\partial-\mu\Id)^t(g)=0$, the right-hand side vanishes for
$m\ge s+t-1$.  Thus the product of generalized eigenvectors of
eigenvalues $\lambda,\mu$ lies in the generalized eigenspace of
$\lambda+\mu$, and
\[
 \partial_{\mathrm{ss}}(fg)
 =(\lambda+\mu)fg
 =\partial_{\mathrm{ss}}(f)g+f\partial_{\mathrm{ss}}(g).
\]
The assertion for $\partial_{\mathrm{nil}}=\partial-\partial_{\mathrm{ss}}$
follows.

For $(p,n)\ne(3,1)$, the next normal form follows
from~\cite[Theorem~2(ii)--(iii)]{Premet2015}.
We give a direct proof for all $p>2$.

\begin{lemma}
\label{lem:d-simple-derivation-form}
For $n>0$, let $\partial\in\Der(\OO(1;n))$ be such that $\OO(1;n)$ is $\partial$-simple.
Then there exist an integer $r$ with $0\le r\le n$, an automorphism
$\sigma\in\Aut(\OO(1;n))$, and elements
$\nu_{r+1},\ldots,\nu_n\in\kk$ linearly independent over
$\mathbb F_p$ such that
\begin{equation}
 \sigma \partial\sigma^{-1}
 =\Delta_r+\sum_{i=r+1}^n
   \nu_i(1+x_i)\partial_i.
 \label{eq:d-simple-derivation-form}
\end{equation}
The integer $r$ is determined by
$\dim\ker \partial_{\mathrm{ss}}=p^r$.
\end{lemma}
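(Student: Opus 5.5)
The plan is to work with the Jordan decomposition $\partial=\partial_{\mathrm{ss}}+\partial_{\mathrm{nil}}$ established just above. Both parts are derivations, and both are polynomials in $\partial$, so any $\partial$-stable subspace is stable under each of them. First I normalize the semisimple part $\partial_{\mathrm{ss}}$. Then I treat the nilpotent part on the kernel $B=\ker\partial_{\mathrm{ss}}$. Finally I remove the remaining cross terms by a further coordinate change.

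\emph{Step 1: normal form of $\partial_{\mathrm{ss}}$.} Write $\OO=\OO(1;n)$ and let $\OO_\mu$ be the $\mu$-eigenspace of $\partial_{\mathrm{ss}}$.
\begin{itemize}
\item By the computation preceding the lemma, $\OO_\lambda\OO_\mu\subseteq\OO_{\lambda+\mu}$.
\item Hence $B=\OO_0$ is a subalgebra, and the set $\Lambda$ of eigenvalues is closed under addition.
\item If $u\in\OO_\mu$ is a unit, then $u^p=u(0)^p$ is a nonzero constant lying in $\OO_{p\mu}=\OO_0$, which gives no obstruction. Moreover $u^{-1}=u(0)^{-p}u^{p-1}$ lies in $\OO_{-\mu}$, so $\OO_\mu=uB$ and each such eigenspace is a free $B$-module of rank one.
\end{itemize}
Next I use $\partial$-simplicity to show that every eigenspace contains a unit. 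If $\OO_\mu\subseteq\mathfrak m$ for some $\mu\neq0$, I would show that the ideal generated by all non-unit eigenvectors of nonzero eigenvalue is proper. It is $\partial$-stable because $\partial$ commutes with $\partial_{\mathrm{ss}}$, which would be a contradiction. This is the step that excludes toral summands of type $x_i\partial_i$.

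Now choose units $u_{r+1},\dots,u_n$ in eigenspaces whose eigenvalues $\nu_{r+1},\dots,\nu_n$ form an $\mathbb F_p$-basis of $\Lambda$. Set $x_i=u_i/u_i(0)-1$ for $i>r$. Choose $x_1,\dots,x_r\in B\cap\mathfrak m$ so that together these elements give a basis of $\mathfrak m/\mathfrak m^2$. Dimension counting then gives $\dim\OO=p^r|\Lambda|$, so $\dim B=p^r$ and $B=\kk[x_1,\dots,x_r]$. In these coordinates $\partial_{\mathrm{ss}}=\sum_{i>r}\nu_i(1+x_i)\partial_i$. This also shows that $r$ is determined by $\dim\ker\partial_{\mathrm{ss}}=p^r$.

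\emph{Step 2: the nilpotent part on $B$.} The derivation $\partial$ preserves $B$ and acts on it as $\partial_{\mathrm{nil}}|_B$. If $J$ is a $\partial$-stable ideal of $B$, then $J\OO=\bigoplus_\mu J u_\mu$ is a $\partial$-stable ideal of $\OO$. Hence $B\cong\OO(1;r)$ is $D$-simple for the nilpotent derivation $D=\partial|_B$.

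I then show that such a $D$ is conjugate to $\Delta_r$. The kernel of $D$ is a subalgebra and a $D$-simple local algebra, and I expect to show it is $\kk$. Granting that, $D$ is a single Jordan block of size $p^r$. I choose $y$ with $D^{p^r-1}(y)=1$ and put $z=D^{p^r-2}(y)$, normalized so that $z(0)=0$. The elements $z_i=$ suitable divided-power-type iterates, mimicking $x^{(p^{i-1})}$, then serve as new coordinates in which $D=\delta$. Finally I apply $\tau_r$ via \eqref{eq:albert-lowering-conjugacy} to obtain $\Delta_r$.

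\emph{Step 3: removing cross terms (expected main obstacle).} After Steps 1--2 we have
\[
 \partial=\Delta_r+\sum_{i\le r}g_i\partial_i+\sum_{i>r}\bigl(\nu_i+h_i\bigr)(1+x_i)\partial_i .
\]
The condition $[\partial_{\mathrm{ss}},\partial_{\mathrm{nil}}]=0$ forces the coefficients $g_i$ and $h_i$ to have eigenvalue $0$, so they lie in $B$. The $g_i$ are absorbed into Step 2, since $\partial|_B$ was already normalized. The $h_i\in B$ make $\partial_{\mathrm{nil}}$ act on $1+x_i$ by $h_i(1+x_i)$, and nilpotency forces $h_i\in\mathfrak m_B$.

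To kill $h_i$, I replace $1+x_i$ by $w_i(1+x_i)$ with $w_i\in B^\times$ solving $\Delta_r(w_i)=-h_iw_i$. I would write $w_i$ as an exponential-type expression of a $\Delta_r$-primitive of $h_i$. The difficulty is that $\Delta_r$ is not surjective on $B$: its image is $\mathfrak m_B$-complementary to $x^{(p^r-1)}$. So I would first try to solve the equation directly using the single Jordan block structure, handling the top-degree obstruction separately by using the freedom to modify $\nu_i$ and $x_1,\dots,x_r$.
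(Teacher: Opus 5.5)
Your proposal has a genuine gap. Step~3, which you flag as the main obstacle, is left unresolved, and the fix you suggest points the wrong way. The difficulty comes from taking the units $u_i$ to be eigenvectors of $\partial_{\mathrm{ss}}$ rather than of $\partial$.

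The missing idea, which is how the paper argues, is this. Each generalized eigenspace $\OO_\nu$ is a nonzero $\partial$-stable subspace, so it contains a genuine eigenvector $w$ with $\partial w=\nu w$. Every such $w$ is a unit, because $\partial(aw)=(\partial(a)+\nu a)w$ makes $\OO w$ a nonzero $\partial$-stable ideal. Normalize $w_i(0)=1$ and set $x_i=w_i-1$. Then $\partial(x_i)=\nu_i(1+x_i)$ exactly, so the $h_i$ never appear.

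The same fact proves your Step~1 unit claim directly. As literally phrased, your ideal generated by all non-unit eigenvectors need not be $\partial$-stable, since $\partial$ can send a non-unit of $\OO_\mu$ to a unit. Use $\OO\cdot\OO_\mu$ for an eigenspace $\OO_\mu\subseteq\mathfrak m$ instead.

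In your own notation, $(\partial-\nu_i\Id)\,m_{1+x_i}=m_{1+x_i}(\Delta_r+m_{h_i})$ on $B$. Hence $\Delta_r+m_{h_i}$ is nilpotent and has a nonzero kernel vector $w$. This $w$ is a unit, because $Bw$ is $\Delta_r$-stable.

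Your reduction of this nilpotency to the condition $h_i\in\mathfrak m_B$ throws away exactly the information needed. Take $h=\beta x^{(p^r-1)}$ with $\beta\ne0$. Then $\Delta_r(w)=-hw$ has no unit solution, since $\operatorname{im}\Delta_r$ misses $x^{(p^r-1)}$. Such $h$ is excluded only because $(\Delta_r+m_h)^{p^r}=\beta\Id$ is not nilpotent. So the ``top-degree obstruction'' never occurs. It could not be repaired by changing $\nu_i$ in any case, since these are the eigenvalues of $\partial_{\mathrm{ss}}$.

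Step~2 also leaves its core unproved. If the $z_i$ are meant to be iterates $D^k(y)$ of a cyclic vector, they do not in general give coordinates in which $D=\delta$. In the model $B=\OO(1;r)$ with $D=\delta$ and $r\ge2$, the choice $y=x^{(p^r-1)}+x^{(p^r-2)}$ gives $z=x$. But then $D^{p^r-p}(y)=x^{(p-1)}+x^{(p-2)}\ne -z^{p-1}$, which is the value required for the second coordinate.

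The coordinates must be built inductively, as in the paper:
\begin{itemize}
\item Suppose $u_1,\dots,u_j\in B\cap\mathfrak m$ have been found, with $D$ acting as $\Delta_j$ on $\kk[u_1,\dots,u_j]$.
\item Then $t_j=\prod_{i\le j}u_i^{p-1}$ satisfies $D^{p^j}(t_j)=0$.
\item Since $D|_B$ is a single Jordan block and $p^j\le p^r-1$, this gives $t_j\in\ker D^{p^r-1}=\operatorname{im}D$.
\item Take $u_{j+1}$ with $D(u_{j+1})=t_j$ and zero constant term.
\item Injectivity of $x_i\mapsto u_i$ follows from $\Delta_{j+1}$-simplicity.
\end{itemize}
This produces $\Delta_r$ directly, with no need for $\tau_r$. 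Your claim $\ker D=\kk1$ is immediate: for $f\in\ker D$, the ideal $B(f-f(0))$ is $D$-stable and proper, so it is zero.
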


\begin{proof}
Write $\mathcal A=\OO(1;n)$.
Differential simplicity first determines the kernel of $\partial$:
if $f\in\ker\partial$, then
$\mathcal A(f-f(0))\subseteq\mathfrak m$ is a proper $\partial$-stable ideal.
Hence $f=f(0)$ and $\ker\partial=\kk1$.
Let $\Lambda$ be the spectrum of $\partial$, and put
\[
 \mathcal A_\lambda=\ker(\partial-\lambda\Id)^{p^n},
 \qquad
 \mathcal A=\bigoplus_{\lambda\in\Lambda}\mathcal A_\lambda.
\]
In particular, $\mathcal A_0=\ker\partial_{\mathrm{ss}}$ is a unital
$\partial$-stable subalgebra.

To compare the generalized eigenspaces with $\mathcal A_0$, we use
the fact that every nonzero eigenvector is a unit.  Indeed, if
$0\ne w\in\ker(\partial-\lambda\Id)$, then
\[
 \partial(aw)=\bigl(\partial(a)+\lambda a\bigr)w
 \qquad(a\in\mathcal A).
\]
Thus $\mathcal Aw$ is a nonzero $\partial$-stable ideal, so
$\mathcal Aw=\mathcal A$ and $w$ is a unit.
We may therefore choose eigenvectors $z_\lambda$ with $z_\lambda(0)=1$.
Then $z_\lambda^p=1$ and
\[
 (\partial-\lambda\Id)^j(z_\lambda f)
 =z_\lambda\partial^j(f)
 \qquad(f\in\mathcal A,\ j\ge0).
\]
Since $z_\lambda$ is a unit, this gives
$\mathcal A_\lambda=z_\lambda\mathcal A_0$.
Moreover,
\[
 \partial(z_\lambda z_\mu)=(\lambda+\mu)z_\lambda z_\mu,
 \qquad
 \partial(z_\lambda^{-1})=-\lambda z_\lambda^{-1}.
\]
The products and inverses are nonzero, so $\Lambda$ is a finite
$\mathbb F_p$-subspace of $\kk$.  Consequently, for some $0\le r\le n$,
\[
 p^n=|\Lambda|\dim\mathcal A_0,
 \qquad \dim\mathcal A_0=p^r,
 \qquad \dim_{\mathbb F_p}\Lambda=n-r.
\]
Choose an $\mathbb F_p$-basis $\nu_{r+1},\ldots,\nu_n$ of $\Lambda$
and eigenvectors $z_i=z_{\nu_i}$.  For $0\le a_i<p$,
\[
 \partial\left(\prod_{i=r+1}^n z_i^{a_i}\right)
 =\left(\sum_{i=r+1}^n a_i\nu_i\right)
   \prod_{i=r+1}^n z_i^{a_i}.
\]
These eigenvalues exhaust $\Lambda$ without repetition, so the
generalized eigenspace decomposition takes the form
\[
 \mathcal A=\bigoplus_{0\le a_{r+1},\ldots,a_n<p}
 \mathcal A_0\prod_{i=r+1}^n z_i^{a_i}.
\]

This decomposition reduces the construction of coordinates to
the subalgebra $\mathcal A_0$.
The restriction $\partial_0=\partial|_{\mathcal A_0}$ is nilpotent
with one-dimensional kernel, so it has a single Jordan block and
\[
 \partial_0^{p^r}=0,
 \qquad \ker\partial_0=\kk1,
 \qquad \operatorname{im}\partial_0=\ker\partial_0^{p^r-1}.
\]
Starting with $\psi_0:\kk\hookrightarrow\mathcal A_0$, we construct
injective unital algebra homomorphisms
\[
 \psi_j:\OO(1;j)\longrightarrow\mathcal A_0,
 \qquad x_i\longmapsto u_i\in\mathcal A_0\cap\mathfrak m,
 \qquad \partial_0\psi_j=\psi_j\Delta_j.
\]
Suppose $\psi_j$ has been constructed and $j<r$.
For the next coordinate, we need a preimage under $\partial_0$ of
$t_j=\prod_{i=1}^j u_i^{p-1}$, with $t_0=1$.
Since $\Delta_j^{p^j}=0$ on $\OO(1;j)$,
\[
 \partial_0^{p^j}(t_j)
 =\psi_j\left(\Delta_j^{p^j}
   \left(\prod_{i=1}^j x_i^{p-1}\right)\right)=0.
\]
As $p^j\le p^r-1$, we obtain
\[
 t_j\in\ker\partial_0^{p^j}
 \subseteq\ker\partial_0^{p^r-1}
 =\operatorname{im}\partial_0.
\]
Choose $v\in\mathcal A_0$ with $\partial_0(v)=t_j$ and subtract its
constant term to obtain
\[
 u_{j+1}=v-v(0)\in\mathcal A_0\cap\mathfrak m,
 \qquad u_{j+1}^p=0,
 \qquad \partial_0(u_{j+1})=t_j.
\]
Thus $x_{j+1}\mapsto u_{j+1}$ extends $\psi_j$ to a unital algebra
homomorphism $\psi_{j+1}$ satisfying
$\partial_0\psi_{j+1}=\psi_{j+1}\Delta_{j+1}$.
Its kernel is a proper $\Delta_{j+1}$-stable ideal, so it is zero by
Section~\ref{subsec:divided-power-algebras}.
This completes the induction.  Since
$\dim\OO(1;r)=\dim\mathcal A_0=p^r$, we have
$\mathcal A_0=\kk[u_1,\ldots,u_r]$.

The remaining coordinates come from the chosen eigenvectors:
for $i>r$, put $u_i=z_i-1$.  Then
\[
 u_i^p=z_i^p-1=0,
 \qquad \partial(u_i)=\nu_i(1+u_i).
\]
The preceding decomposition gives $\mathcal A=\kk[u_1,\ldots,u_n]$.
Hence $\phi(x_i)=u_i$ defines a surjective algebra endomorphism of
$\mathcal A$, which is an automorphism by finite-dimensionality.
On the generators,
\[
 \phi^{-1}\partial\phi(x_i)=
 \begin{cases}
  \displaystyle\prod_{\ell=1}^{i-1}x_\ell^{p-1},&1\le i\le r,\\[4pt]
  \nu_i(1+x_i),&r<i\le n.
 \end{cases}
\]
Thus $\sigma=\phi^{-1}$ gives~\eqref{eq:d-simple-derivation-form}.
\end{proof}

\Needspace{10\baselineskip}
\begin{corollary}
\label{cor:d-simple-derivation-orbit}
Let $n>0$ and $\partial\in\Der(\OO(1;n))$.  Then $\OO(1;n)$ is $\partial$-simple if and only if
there exist $\sigma\in\Aut(\OO(1;n))$ and $h\in \OO(1;n)^\times$ such that
\begin{equation}
 h\,\sigma \partial\sigma^{-1}=\Delta_n.
 \label{eq:single-derivation-orbit}
\end{equation}
\end{corollary}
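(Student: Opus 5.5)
The converse direction is formal, so I treat it first. Suppose $h\,\sigma\partial\sigma^{-1}=\Delta_n$ with $h$ a unit. Let $I$ be a $\partial$-stable ideal. Then $\sigma(I)$ is stable under $\sigma\partial\sigma^{-1}=h^{-1}\Delta_n$. Because $\sigma(I)$ is an ideal and $h$ is a unit, it is also stable under $\Delta_n$. By \eqref{eq:albert-lowering-conjugacy}, $\OO(1;n)$ is $\Delta_n$-simple, so $\sigma(I)$ is $0$ or everything, and hence so is $I$.

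For the forward direction, I start from Lemma~\ref{lem:d-simple-derivation-form} and may assume
\[
 \partial=\Delta_r+\sum_{i>r}\nu_i(1+x_i)\partial_i .
\]
The aim is to multiply by a unit and change coordinates so that this becomes $\Delta_n$. If $r=n$ there is nothing to do. Otherwise I proceed by descending induction, or directly as follows.

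Consider $z_i=1+x_i$ for $i>r$; these are eigenvectors with $\partial z_i=\nu_iz_i$. I would introduce a toral-to-nilpotent change. I look for a unit $h$ and new coordinates $u_1,\dots,u_n$ with $u_i^p=0$ such that $D:=h\partial$ satisfies
\[
 D(u_1)=1,\qquad D(u_i)=\prod_{j<i}u_j^{p-1}.
\]
Since any derivation $D=h\partial$ is determined by its values on generators, it suffices to produce a triangular system. Following Premet's argument, I treat the case $n-r=1$ first: the derivation $\nu(1+x)\partial_x$ on $\OO(1;1)$. Multiplying by $h=\nu^{-1}(1+x)^{-1}$ gives $\partial_x$. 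In general, take
\[
 h=\prod_{i>r}\text{(suitable units)}\cdot\ldots
\]
This does not work for several independent $\nu_i$ at once, because $h$ can only cancel one eigenvalue. So the real construction is recursive. Write $\mathcal A=\OO(1;r)\otimes B$, with $B$ generated by $x_{r+1},\dots,x_n$. I would find $h$ with $h\partial(y)=1$ for some nilpotent $y$, then show that $\ker(h\partial)$ contains a smaller algebra, and iterate.

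Concretely, the plan is to show that some unit multiple $D=h\partial$ is \emph{nilpotent as an operator} with one-dimensional kernel. Once that holds, the inductive construction in the proof of Lemma~\ref{lem:d-simple-derivation-form} with $r=n$ (the $\mathcal A_0$ part) applies verbatim. It gives $\sigma$ with $\sigma D\sigma^{-1}=\Delta_n$, since $D$ is still $\mathcal A$-simple because $h$ is a unit. The kernel condition comes for free from simplicity, exactly as in that proof. So the whole problem reduces to finding a unit $h$ such that $(h\partial)_{\mathrm{ss}}=0$, that is, such that $h\partial$ has only the eigenvalue $0$.

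To choose $h$, I would take
\[
 h=\Bigl(1+\sum_{i>r}c_i\,t\,x_i\Bigr)^{-1},
\]
or more naturally $h=(1+x_{r+1})^{-1}$ combined with a shift of the nilpotent block. The test case is $r=0$, $n=1$: $\nu(1+x)\partial$ times $\nu^{-1}(1+x)^{-1}$ gives $\partial_1=\Delta_1$. For larger $n-r$, I would proceed one toral coordinate at a time. At stage $k$, suppose the derivation is already $\Delta_k$ on $\OO(1;k)$, plus toral terms on the remaining variables. Multiply by the unit
\[
 h=1-x_1^{p-1}\cdots x_k^{p-1}\,\cdot\,\mu\,(1+x_{k+1})^{-1}\ldots
\]
chosen so that the new derivation maps $x_{k+1}$ to $\prod_{j\le k}x_j^{p-1}$ plus higher terms. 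Then I verify that the resulting derivation is again of the form in Lemma~\ref{lem:d-simple-derivation-form}, now with $r$ increased by one; this can be checked via $\dim\ker(\cdot)_{\mathrm{ss}}$. This step is the main obstacle: exhibiting the right unit and checking that the eigenvalue count drops, meaning the semisimple part acquires a bigger kernel. I expect it is handled by computing the $p$-th power $(h\partial)^p$ through Hochschild's formula,
\[
 (hD)^p=h^pD^p+(hD)^{p-1}(h)D,
\]
and then choosing $h$ to make this $p$-nilpotent.
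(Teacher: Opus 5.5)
Your converse direction is correct and matches the paper's argument. The forward direction is not proved. You reduce it correctly to finding a unit $h$ that lowers the toral rank, equivalently making some unit multiple of $\partial$ nilpotent, after which Lemma~\ref{lem:d-simple-derivation-form} yields $\Delta_n$. But you leave this step open, and the candidate you sketch cannot work as stated. For any unit $h$ one has $h\partial(x_{k+1})=\nu_{k+1}h(1+x_{k+1})$, which is a unit. So it can never equal $\prod_{j\le k}x_j^{p-1}$ plus higher-degree terms when $k\ge1$. The new nilpotent coordinate has to come from re-applying the Lemma, not from the old $x_{k+1}$. Likewise $\ker(h\partial)=\kk1$ always, by simplicity, so the recursion you describe through kernels can only make sense for $\ker(h\partial)_{\mathrm{ss}}$.

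The missing idea is that the right unit at each stage is just the eigenvector $u=1+x_{r+1}$, with no correction terms. Its inverse, which you used in the base case, works equally well. Put $\nu=\nu_{r+1}$. Since $\partial(u)=\nu u$ and $u^p=1$, the relation $\partial m_u=m_u(\partial+\nu\Id)$ gives by induction
\[
 (u\partial)^p=\prod_{c\in\mathbb F_p}(\partial+c\nu\Id)=\partial^p-\nu^{p-1}\partial .
\]
Your Hochschild formula gives the same result, because $(u\partial)^{p-1}(u)=(p-1)!\,\nu^{p-1}u^p=-\nu^{p-1}$.

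This operator is a polynomial in $\partial$, so it preserves each generalized eigenspace $\mathcal A_\lambda$ of $\partial$. On $\mathcal A_\lambda$ we have $\dim\mathcal A_\lambda=p^r$ and $(\partial-\lambda\Id)^{p^r}=0$. Since $r\le n-1$, this gives
\[
 (u\partial)^{p^n}|_{\mathcal A_\lambda}=(\lambda^p-\nu^{p-1}\lambda)^{p^{n-1}}\Id,
\]
which vanishes exactly when $\lambda\in\mathbb F_p\nu$. Hence $\dim\ker(u\partial)_{\mathrm{ss}}=p^{r+1}$. Since $u\partial$ is still differentially simple, applying the Lemma to it raises $r$ by one.

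You were right that one unit removes only one eigenvalue direction. This suffices, because the Lemma re-normalizes coordinates between steps. The composite changes $g\,\rho(h)\,\partial^{\rho\sigma}$ are again a unit times a conjugate of $\partial$. Without this computation, or an equivalent one, the forward implication remains unproved.
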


\begin{proof}
Write $\mathcal A=\OO(1;n)$ and suppose that $\mathcal A$ is
$\partial$-simple.  By Lemma~\ref{lem:d-simple-derivation-form},
after conjugation we may assume that
\begin{equation}
 \partial=\Delta_r+\sum_{i=r+1}^n\nu_i(1+x_i)\partial_i,
 \label{eq:absorption-start}
\end{equation}
where $\nu_{r+1},\ldots,\nu_n$ are linearly independent over $\mathbb F_p$.
If $r=n$, then $\partial=\Delta_n$.
Suppose $r<n$.  We will use a unit multiple of $\partial$ to increase
$r$ by one in the normal form.  Put
\[
 u=1+x_{r+1},\qquad \nu=\nu_{r+1}\ne0,\qquad E=u\partial.
\]
For every ideal $I$ of $\mathcal A$, the fact that $u$ is a unit gives
\[
 E(I)\subseteq I\quad\Longleftrightarrow\quad\partial(I)\subseteq I.
\]
Thus $\mathcal A$ is $E$-simple.
Since $\partial(u)=\nu u$ and $u^p=1$, we have
\[
 \partial m_u=m_u(\partial+\nu\Id),
 \qquad
 E^s=m_{u^s}\prod_{j=0}^{s-1}(\partial+j\nu\Id)
 \quad(s\ge1),
\]
where the second identity follows by induction.
Taking $s=p$ gives
\begin{equation}
 E^p=\prod_{c\in\mathbb F_p}(\partial+c\nu\Id)
     =\partial^p-\nu^{p-1}\partial,
 \label{eq:eigenunit-p-power}
\end{equation}
using $\prod_{c\in\mathbb F_p}(X+c\nu)=X^p-\nu^{p-1}X$.

To determine how this multiplication changes $r$, we compute the
generalized zero eigenspace of $E$.
By Lemma~\ref{lem:d-simple-derivation-form} and its proof,
\[
 \Lambda=\operatorname{span}_{\mathbb F_p}\{\nu_{r+1},\ldots,\nu_n\},
 \qquad \mathcal A=\bigoplus_{\lambda\in\Lambda}\mathcal A_\lambda,
\]
and on each generalized eigenspace,
\[
 \dim\mathcal A_\lambda=p^r,
 \qquad
 \left.(\partial-\lambda\Id)^{p^r}\right|_{\mathcal A_\lambda}=0.
\]
Since $r\le n-1$, equation~\eqref{eq:eigenunit-p-power} yields
\begin{align*}
 \left.E^{p^n}\right|_{\mathcal A_\lambda}
 &=\left.(\partial^p-\nu^{p-1}\partial)^{p^{n-1}}
   \right|_{\mathcal A_\lambda}\\
 &=\bigl(\lambda^{p^n}-\nu^{(p-1)p^{n-1}}
                 \lambda^{p^{n-1}}\bigr)\Id\\
 &=\bigl(\lambda^p-\nu^{p-1}\lambda\bigr)^{p^{n-1}}\Id.
\end{align*}
The scalar vanishes exactly when $\lambda\in\mathbb F_p\nu$.
Since $\dim\mathcal A=p^n$, we therefore obtain
\begin{equation}
 \begin{gathered}
  \ker E_{\mathrm{ss}}=\ker E^{p^n}
       =\bigoplus_{c\in\mathbb F_p}\mathcal A_{c\nu},\\
  \dim\ker E_{\mathrm{ss}}=p\cdot p^r=p^{r+1}.
 \end{gathered}
 \label{eq:toral-rank-drop}
\end{equation}
Applying Lemma~\ref{lem:d-simple-derivation-form} to $E$ replaces $r$
by $r+1$ in~\eqref{eq:absorption-start}.
Repeating this step $n-r$ times gives $\Delta_n$.
The successive changes combine into a single change of the required
form: for automorphisms $\sigma,\rho$ and units $h,g$, we have
\[
 g(h\partial^\sigma)^\rho
 =g\,\rho(h)\,\partial^{\rho\sigma}
\]
and $g\,\rho(h)$ is again a unit.

Conversely, if $h\partial^\sigma=\Delta_n$, then for every ideal $I$
of $\mathcal A$,
\[
 \partial(I)\subseteq I
 \quad\Longleftrightarrow\quad
 \Delta_n\bigl(\sigma(I)\bigr)\subseteq\sigma(I).
\]
The $\Delta_n$-simplicity of $\mathcal A$ gives $I=0$ or $I=\mathcal A$.
\end{proof}

\begin{proposition}
\label{prop:normalized-differential-data}
Every finite-dimensional simple Novikov algebra over $\kk$ is
isomorphic to
\begin{equation}
 \OO(1;n)(\delta,b),\qquad
 x\circ y=x\bigl(\delta(y)+by\bigr),
 \label{eq:fixed-Albert-normal-form}
\end{equation}
for some $n\ge0$ and $b\in\OO(1;n)$, with $b=1$ if $n=0$.
\end{proposition}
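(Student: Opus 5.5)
We combine Theorem~\ref{thm:structural-reduction}, Corollary~\ref{cor:d-simple-derivation-orbit}, the conjugacy~\eqref{eq:albert-lowering-conjugacy}, and the converse half of Theorem~\ref{thm:change-data}. By Theorem~\ref{thm:structural-reduction}, a finite-dimensional simple Novikov algebra $\mathcal N$ is isomorphic to $\OO(1;n)(\partial,a)$ with $\OO(1;n)$ $\partial$-simple.

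\emph{The case $n=0$.} Here $\partial=0$ and $\mathcal N\cong\kk(0,a)$, whose product is $x\circ y=axy$. Simplicity requires $\mathcal N\circ\mathcal N\ne0$, so $a\ne0$. Take $\sigma=\Id$ and $h=a^{-1}$. Then $h\partial^\sigma=0$ and $\partial^\sigma(h)+h\sigma(a)=1$, so Theorem~\ref{thm:change-data} gives $\kk(0,a)\cong\kk(0,1)$. Alternatively, the map $x\mapsto ax$ is an explicit isomorphism.

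\emph{The case $n>0$.}
\begin{enumerate}[label=\textup{(\arabic*)},leftmargin=2em]
\item By Corollary~\ref{cor:d-simple-derivation-orbit}, there exist $\sigma\in\Aut(\OO(1;n))$ and $h\in\OO(1;n)^\times$ with $h\partial^\sigma=\Delta_n$.
\item By~\eqref{eq:albert-lowering-conjugacy}, $\tau_n^{-1}\Delta_n\tau_n=\delta$. Put $\sigma'=\tau_n^{-1}\sigma$ and $h'=\tau_n^{-1}(h)$, which is again a unit. Then
\[
 h'\partial^{\sigma'}=\tau_n^{-1}\bigl(h\partial^\sigma\bigr)\tau_n=\tau_n^{-1}\Delta_n\tau_n=\delta.
\]
This is the composition rule recorded at the end of the proof of Corollary~\ref{cor:d-simple-derivation-orbit}.
\item Define $b=\partial^{\sigma'}(h')+h'\sigma'(a)$. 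The pair $(\sigma',h')$ then satisfies~\eqref{eq:all-isomorphisms-data} with target data $(\delta,b)$.
\item The converse direction of Theorem~\ref{thm:change-data} shows that $\phi(x)=h'^{-1}\sigma'(x)$ is an isomorphism $\OO(1;n)(\partial,a)\to\OO(1;n)(\delta,b)$.
\end{enumerate}

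The only point needing care is step (4). The converse in Theorem~\ref{thm:change-data} is stated for simple algebras, while simplicity of the target is what we want to conclude. If the converse is read as requiring simplicity of both sides, we can instead check directly that $\phi$ is a homomorphism, which is a short computation:
\[
 \phi(x\circ y)=h'^{-1}\sigma'(x)\bigl(\partial^{\sigma'}(\sigma' y)+\sigma'(a)\sigma'(y)\bigr).
\]
On the other side,
\[
 \phi(x)\circ\phi(y)=h'^{-1}\sigma'(x)\bigl(\delta(h'^{-1}\sigma' y)+b\,h'^{-1}\sigma' y\bigr).
\]
Substitute $\delta=h'\partial^{\sigma'}$, expand by the Leibniz rule, and use the definition of $b$; the two expressions agree. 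Since $\phi$ is bijective, it is an isomorphism.

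Simplicity of the target is in any case available independently from Lemma~\ref{lem:normalized-simple}. So the main obstacle was already handled by the absorption step in Corollary~\ref{cor:d-simple-derivation-orbit}, and what remains is bookkeeping.
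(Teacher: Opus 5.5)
Your proposal is correct and follows the paper's proof essentially step for step. You use Corollary~\ref{cor:d-simple-derivation-orbit}, conjugate by $\tau_n$ via~\eqref{eq:albert-lowering-conjugacy} to turn $\Delta_n$ into $\delta$, define $b$ by the transformation rule, and apply the converse of Theorem~\ref{thm:change-data}, with simplicity of the target supplied by Lemma~\ref{lem:normalized-simple}; your explicit treatment of $n=0$ (forcing $a\ne0$ and rescaling) and the direct Leibniz-rule check that $h'^{-1}\sigma'$ is a homomorphism are harmless additions that the paper leaves implicit.
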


\begin{proof}
For $\mathcal N\cong\kk$, take $n=0$ and $b=1$.
By Theorem~\ref{thm:structural-reduction}, it remains to consider
$\OO(1;n)(\partial,a)$ with $n>0$ and $\OO(1;n)$ $\partial$-simple.
Corollary~\ref{cor:d-simple-derivation-orbit} gives
$\sigma_0\in\Aut(\OO(1;n))$ and $h_0\in\OO(1;n)^\times$ with
$h_0\partial^{\sigma_0}=\Delta_n$.
Using $\tau_n\delta\tau_n^{-1}=\Delta_n$
from~\eqref{eq:albert-lowering-conjugacy}, put
\[
 \sigma=\tau_n^{-1}\sigma_0,
 \qquad h=\tau_n^{-1}(h_0).
\]
Then
\[
 h\partial^\sigma
 =\tau_n^{-1}(h_0\partial^{\sigma_0})\tau_n
 =\tau_n^{-1}\Delta_n\tau_n
 =\delta.
\]
Set
\begin{equation}
 b=\partial^\sigma(h)+h\sigma(a)
  =h^{-1}\delta(h)+h\sigma(a).
 \label{eq:normalized-b}
\end{equation}
By Lemma~\ref{lem:normalized-simple}, $\OO(1;n)(\delta,b)$ is simple.
Theorem~\ref{thm:change-data} now gives
\[
 h^{-1}\sigma:\OO(1;n)(\partial,a)
 \xrightarrow{\ \sim\ }\OO(1;n)(\delta,b).\qedhere
\]
\end{proof}

\Needspace{8\baselineskip}

\section{Classification of simple algebras}
\label{sec:residual-orbit}

By Proposition~\ref{prop:normalized-differential-data}, the classification
of finite-dimensional simple Novikov algebras reduces to that of
$\OO(1;n)(\delta,b)$.  We use admissible automorphisms to reduce $b$ to
$b_{\alpha,\beta}$ and prove that all isomorphisms between the resulting
normal forms arise from scalings.  This proves Theorem~A and also yields
their automorphism groups and a classification of Novikov--Poisson
algebras with simple Novikov part.

\subsection{Normal forms}

Let $n>0$.  By Lemma~\ref{lem:normalized-simple} and
Theorem~\ref{thm:change-data}, the isomorphisms among the algebras
$\OO(1;n)(\delta,b)$, $b\in\OO(1;n)$, are the maps $h^{-1}\sigma$
with $(\sigma,h)\in G_n$, where
\begin{equation}
 G_n=
 \bigl\{(\sigma,h)\in\Aut(\OO(1;n))\times\OO(1;n)^\times:
 h\delta^\sigma=\delta\bigr\},
 \qquad \delta^\sigma=\sigma\delta\sigma^{-1}.
 \label{eq:residual-group}
\end{equation}
This is a group under the multiplication
\[
 (\tau,g)(\sigma,h)=(\tau\sigma,g\tau(h)).
\]
Indeed,
\[
 (g^{-1}\tau)(h^{-1}\sigma)
 =(g\tau(h))^{-1}\tau\sigma.
\]
The corresponding action on $\OO(1;n)$ is
\begin{equation}
 (\sigma,h)\mathbin{\ast}b
 =\delta^\sigma(h)+h\sigma(b).
 \label{eq:residual-b-action}
\end{equation}

An automorphism $\sigma$ of $\OO(1;n)$ is \emph{admissible} if
$\sigma(x^{(r)})=\sigma(x)^{(r)}$ in $\OO(1)$ for all $0\le r<p^n$.
Wilson described these automorphisms for $p\ge5$ in
\cite[Lemmas~9--10 and Corollaries~1--2 to Lemma~9]{Wilson1971}.
We prove the following criterion for $p>2$.

\begin{proposition}
\label{prop:residual-admissible-group}
The map $(\sigma,h)\mapsto\sigma$ is an isomorphism from $G_n$ onto
the group of admissible automorphisms of $\OO(1;n)$.  The map
$\sigma\mapsto\sigma(x)$ is a bijection from this group onto the set of
elements
\begin{equation}
 f=\sum_{r=1}^{p^n-1}a_rx^{(r)},
 \qquad
 a_1\ne0,\qquad
 a_{p^j}=0\quad(1\le j<n).
 \label{eq:admissible-substitution}
\end{equation}
The corresponding second component is
\begin{equation}
 h=\delta(f),
 \label{eq:residual-Jacobian}
\end{equation}
and the action \eqref{eq:residual-b-action} is
\begin{equation}
 \sigma\mathbin{\ast}b
 =h^{-1}\delta(h)+h\sigma(b).
 \label{eq:connection-form-residual-action}
\end{equation}
\end{proposition}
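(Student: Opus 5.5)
The plan is to show first that in any pair $(\sigma,h)\in G_n$ the unit $h$ is forced, and that the resulting constraint on $\sigma$ is exactly admissibility. Apply $h\delta^\sigma=\delta$ to $\sigma(x)$: since $\delta^\sigma(\sigma(x))=\sigma(\delta(x))=\sigma(1)=1$, we get $h=\delta(\sigma(x))$. So $h$ is determined by $\sigma$, which gives \eqref{eq:residual-Jacobian} and the injectivity of $(\sigma,h)\mapsto\sigma$. The projection is a group homomorphism by the multiplication rule. Substituting $\delta^\sigma=h^{-1}\delta$ into \eqref{eq:residual-b-action} gives \eqref{eq:connection-form-residual-action} at once. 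What remains is to identify the image and to parametrize it.

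For the image, I would rewrite the condition $h\delta^\sigma=\delta$ as $\delta(\sigma(g))=h\,\sigma(\delta(g))$ for all $g$. Put $f=\sigma(x)$, so $h=\delta(f)$.

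Suppose first that $\sigma$ is admissible. Then $\sigma(x^{(r)})=f^{(r)}$. The chain rule \eqref{eq:full-divided-power-chain-rule} gives $\delta(f^{(r)})=\delta(f)f^{(r-1)}=h\,\sigma(x^{(r-1)})$. This is the required identity on the basis $x^{(r)}$, so the condition holds. We also need $h$ to be a unit, i.e.\ $a_1\ne0$. This holds because $\sigma$ is an automorphism, so $f\notin\mathfrak m^2$. Note that $\mathfrak m^2$ is spanned by the $x^{(r)}$ with $r\ne1,p,\dots,p^{n-1}$; in fact I will need the finer fact below.

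Conversely, suppose the identity holds. I would prove $\sigma(x^{(r)})=f^{(r)}$ by induction on $r$. Both $\sigma(x^{(r)})$ and $f^{(r)}$ have $\delta$-image $h\sigma(x^{(r-1)})=hf^{(r-1)}$, and both have zero constant term. Since $\ker\delta=\kk1$, they agree, provided $f^{(r)}$ makes sense as an element of $\OO(1;n)$. This is the subtle point, since divided powers are computed in $\OO(1)$. I would argue that the difference $f^{(r)}-\sigma(x^{(r)})$ lies in $\OO(1)$, has zero constant term and lies in $\ker\delta$ of $\OO(1)$, which is also $\kk1$. So the two agree in $\OO(1)$, and in particular $f^{(r)}\in\OO(1;n)$ for all $r<p^n$.

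For the parametrization, the main obstacle is to show that, for $f$ with $a_1\ne0$, the condition $f^{(r)}\in\OO(1;n)$ for all $r<p^n$, together with $f\mapsto x^{(r)}\mapsto f^{(r)}$ defining an automorphism, is equivalent to $a_{p^j}=0$ for $1\le j<n$. For necessity I would use $f^{(p^j)}$. Its coefficient of $x^{(p^{j+1})}$, which lies outside $\OO(1;j+1)$ when $j+1$ is compared with the appropriate height, picks up $a_{p^j}^{p}\cdot\frac{(p^{j+1})!}{p!(p^j!)^p}\ne0$ modulo $p$, together with contributions from the other terms. I expect this computation to be delicate. An alternative for necessity is to use $\sigma(x_{j+1})=\sigma(x^{(p^j)})=f^{(p^j)}$ and require $(f^{(p^j)})^p=0$. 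A cleaner route might use $\sigma(x_{j+1})^{p}=0$ via the identity $(x^{(p^j)})^{p}$ versus $f^{(p^j)}$.

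For sufficiency, I would write $f=a_1x+g$. Here $g$ involves only $x^{(r)}$ with $r$ not a power of $p$, so $g$ is in the ideal generated by products. Then I would show by Lemma~\ref{lem:divided-power-identities} and the sum rule that $f^{(p^j)}$ is a polynomial in $x_1,\dots,x_n$ with leading term $a_1^{p^j}x_{j+1}$ modulo $\mathfrak m^2$. Hence $x_{j+1}\mapsto f^{(p^j)}$ defines an endomorphism that is surjective on $\mathfrak m/\mathfrak m^2$, hence an automorphism. It is admissible because products of $f^{(p^j)}$ reproduce $f^{(r)}$ by the digit formula \eqref{eq:divided-power-digits} and the product identity in Lemma~\ref{lem:divided-power-identities}. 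Finally, $\sigma\mapsto\sigma(x)$ is injective, because $\sigma(x)$ determines every $\sigma(x^{(r)})=f^{(r)}$.
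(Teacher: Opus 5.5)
Most of your outline matches the paper: forcing $h=\delta(\sigma(x))$, the induction $\sigma(x^{(r)})=f^{(r)}$ using $\ker\delta=\kk1$ in $\OO(1)$, the chain-rule converse, and the action formula. Your sufficiency construction through $x_{j+1}\mapsto f^{(p^j)}$ is a workable variant, once you check $f^{(p^j)}\in\OO(1;n)$ using $(uv)^{(s)}=u^sv^{(s)}$ and $u^p=0$.

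The genuine gap is the necessity of $a_{p^j}=0$ for $1\le j<n$. The quantity $a_{p^j}^p\,(p^{j+1})!/\bigl(p!\,((p^j)!)^p\bigr)$ comes from the pure term $(a_{p^j}x^{(p^j)})^{(p)}$, which belongs to $f^{(p)}$, not to $f^{(p^j)}$. More seriously, a nonzero coefficient of $x^{(p^{j+1})}$ contradicts nothing when $j+1<n$, because $x^{(p^{j+1})}\in\OO(1;n)$. Admissibility only requires $f^{(r)}\in\OO(1;n)$, i.e.\ vanishing of the coefficients of $x^{(s)}$ with $s\ge p^n$; membership in $\OO(1;j+1)$ is irrelevant. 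So your computation, correctly indexed, covers only $j=n-1$. Your alternatives via $(f^{(p^j)})^p=0$ carry no information, since $u^p=p!\,u^{(p)}=0$ for every $u\in\OO(1)$ with zero constant term.

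The missing idea is to choose the divided power that lands exactly in degree $p^n$. Take $r=p^{n-j}<p^n$ and look at $[x^{(p^n)}]f^{(r)}$. For $u,v\in\OO(1)$ with zero constant term, $[x^{(p^n)}](uv)=0$, because $\binom{p^n}{i}=0$ in $\kk$ for $0<i<p^n$. Hence every mixed term in the expansion of $f^{(r)}$ drops out, which disposes of the ``contributions from the other terms'' you expected to be delicate. Among the pure terms $(a_kx^{(k)})^{(r)}$, only $k=p^j$ has degree $p^n$. This yields $0=\overline{C}\,a_{p^j}^{p^{n-j}}$ with $C=(p^n)!/\bigl((p^{n-j})!\,((p^j)!)^{p^{n-j}}\bigr)$, and counting factors of $p$ in numerator and denominator shows $p\nmid C$.

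Your proof of $a_1\ne0$ rests on this same unproved fact. The condition $f\notin\mathfrak m^2$ alone does not give $a_1\ne0$: for $n\ge2$, the automorphism exchanging $x_1$ and $x_2$ sends $x$ to $x^{(p)}$. The direct argument is that $\sigma(x^{(r)})=f^{(r)}\in F^r$ has coefficient $a_1^r$ at $x^{(r)}$. So the matrix of $\sigma$ is triangular with diagonal $1,a_1,\dots,a_1^{p^n-1}$, and invertibility forces $a_1\ne0$.
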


\begin{proof}
Every automorphism $\sigma$ of $\OO(1;n)$ preserves its maximal ideal, so
$f=\sigma(x)$ has zero constant coefficient.
The condition $h\delta^\sigma=\delta$ is equivalent to
\begin{equation}
 \delta\sigma=h\sigma\delta.
 \label{eq:chain-rule-residual-group}
\end{equation}
Applying it to $x$ gives $h=\delta(f)$.  We prove
$\sigma(x^{(r)})=f^{(r)}$ by induction on $r$.  The case $r=0$ is immediate.
For $1\le r<p^n$, the induction hypothesis,
\eqref{eq:chain-rule-residual-group}, and
Lemma~\ref{lem:divided-power-identities} give
\[
 \delta\bigl(\sigma(x^{(r)})-f^{(r)}\bigr)
 =h\sigma(x^{(r-1)})-\delta(f)f^{(r-1)}=0.
\]
Since $\ker\delta=\kk1$ in $\OO(1)$ and both terms have zero
constant coefficient, their difference is zero.  Thus $\sigma$ is admissible.
Conversely, let $\sigma$ be an admissible automorphism and write
$f=\sigma(x)=\sum_{m=1}^{p^n-1}a_mx^{(m)}$.  The divided-power expansion
shows that $f^{(r)}$ has no terms of degree less than $r$, and its
degree-$r$ coefficient is $a_1^r$.  Thus the matrix of $\sigma$ is triangular
with diagonal $1,a_1,\ldots,a_1^{p^n-1}$.  Invertibility gives $a_1\ne0$,
so $h=\delta(f)$ is a unit.  The chain rule gives
\eqref{eq:chain-rule-residual-group}, and hence $(\sigma,h)\in G_n$.
This proves the asserted identification of groups and
\eqref{eq:residual-Jacobian}.  Substituting $\delta^\sigma=h^{-1}\delta$
in~\eqref{eq:residual-b-action} gives
\eqref{eq:connection-form-residual-action}.

The restrictions on the coefficients of $f$ arise from requiring
$f^{(r)}\in\OO(1;n)$ for $0\le r<p^n$.  We obtain the necessary
restrictions by examining the coefficient of $x^{(p^n)}$.
For any two elements $u,v$ of
$\OO(1)$ with zero constant coefficient,
$[x^{(p^n)}](uv)=0$, since $\binom{p^n}{i}=0$ in $\kk$ for $0<i<p^n$.
Fix $1\le j<n$ and put $r=p^{n-j}$ and $m=p^j$, so that $rm=p^n$.
Every mixed term in the divided-power expansion of $f^{(r)}$ contains at
least two factors with zero constant coefficient and hence contributes
nothing to $x^{(p^n)}$.  Among the pure terms, only
$(a_mx^{(m)})^{(r)}$ can have degree $p^n$.  Admissibility therefore gives
\begin{equation}
 0=[x^{(p^n)}]f^{(r)}=\overline C_j a_{p^j}^{p^{n-j}},
 \qquad
 C_j=\frac{(p^n)!}{r!(m!)^r}\in\mathbb Z,
 \label{eq:forbidden-coefficient-expansion}
\end{equation}
where $\overline C_j$ is the image of $C_j$ in $\kk$.
Counting multiples of $p,p^2,\ldots$ shows that the exponents of $p$ in
the numerator and denominator of $C_j$ are, respectively,
\[
 \frac{p^n-1}{p-1}
 \quad\text{and}\quad
 \frac{r-1}{p-1}+r\frac{m-1}{p-1}
 =\frac{rm-1}{p-1}=\frac{p^n-1}{p-1}.
\]
Hence $p\nmid C_j$, so $\overline C_j\ne0$, and
\eqref{eq:forbidden-coefficient-expansion} implies $a_{p^j}=0$.

Conversely, suppose that $f$ satisfies
\eqref{eq:admissible-substitution}.  To construct the corresponding
automorphism, we first verify that
$f^{(r)}\in\OO(1;n)$ for $0\le r<p^n$.
If $1<m<p^n$ is not a power of $p$, choose a nonzero digit $m_j$ in its
base-$p$ expansion.  Then $m-p^j>0$, and Lucas's congruence gives
\[
 x^{(m)}=m_j^{-1}x^{(p^j)}x^{(m-p^j)}.
\]
Since $(x^{(p^j)})^p=p!\,(x^{(p^j)})^{(p)}=0$, the identity
$(uv)^{(s)}=u^sv^{(s)}$ gives
\[
 \bigl(x^{(m)}\bigr)^{(s)}=0\quad(s\ge p),
 \qquad
 \bigl(x^{(m)}\bigr)^{(s)}=\frac{(x^{(m)})^s}{s!}\in\OO(1;n)
 \quad(0\le s<p).
\]
In the divided-power expansion of $f^{(r)}$, every factor arising from a
nonzero term $a_mx^{(m)}$ with $m>1$ therefore lies in $\OO(1;n)$.
The factors arising from $a_1x$ are $a_1^s x^{(s)}$ with $s\le r<p^n$,
so they also lie in $\OO(1;n)$.  Since $\OO(1;n)$ is a subalgebra, the claim follows.

We can therefore define $\sigma(x^{(r)})=f^{(r)}$ for $0\le r<p^n$.
The divided-power product identity, together with
$\binom{r+s}{r}=0$ when $r,s<p^n\le r+s$, shows that
$\sigma$ is an algebra endomorphism.  The triangularity established above
and $a_1\ne0$ imply that $\sigma$ is invertible.  It is admissible by
construction and uniquely determined by $f$.
\end{proof}

We write $\sigma$ for $(\sigma,\delta(\sigma(x)))\in G_n$ and use
the filtration $F^s$ from~\eqref{eq:divided-power-filtration}.

\begin{lemma}
\label{lem:first-changed-coefficient}
Let $\sigma\ne\Id$ be an admissible automorphism, with coefficients
$a_s$ as in~\eqref{eq:admissible-substitution} and $a_1=1$.  Put
$r=\min\{s:2\le s<p^n,\ a_s\ne0\}$.  Then, for every $b\in\OO(1;n)$,
\begin{equation}
 \sigma\mathbin{\ast}b
 \equiv b+a_rx^{(r-2)}\pmod{F^{r-1}}.
 \label{eq:first-changed-coefficient}
\end{equation}
\end{lemma}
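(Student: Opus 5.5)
We use formula \eqref{eq:connection-form-residual-action}, $\sigma\ast b=h^{-1}\delta(h)+h\sigma(b)$ with $h=\delta(f)$ and $f=\sigma(x)$. We estimate each of the two terms modulo $F^{r-1}$.

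\emph{The twist term.} By hypothesis
\[
 f=x+\sum_{s\ge r}a_sx^{(s)},
 \qquad\text{so}\qquad
 h=\delta(f)=1+a_rx^{(r-1)}+(\text{terms in }F^{r}).
\]
Hence $h\equiv1\pmod{F^{r-1}}$ and $h^{-1}\equiv1\pmod{F^{r-1}}$; the latter holds because $h^{-1}=h(0)^{-p}h^{p-1}$ and $F^iF^j\subseteq F^{i+j}$. Also $\delta(h)=a_rx^{(r-2)}+(\text{terms in }F^{r-1})$, since $\delta(F^s)\subseteq F^{s-1}$. Therefore
\[
 h^{-1}\delta(h)\equiv a_rx^{(r-2)}\pmod{F^{r-1}},
\]
where any extra term has the form $(h^{-1}-1)\delta(h)\in F^{r-1}F^{r-2}\subseteq F^{r-1}$.

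\emph{The term $h\sigma(b)$.} Since $h\equiv1\pmod{F^{r-1}}$, we have $h\sigma(b)\equiv\sigma(b)\pmod{F^{r-1}}$, and it remains to show $\sigma(b)\equiv b\pmod{F^{r-1}}$. By linearity it suffices to prove $\sigma(x^{(m)})-x^{(m)}\in F^{r-1}$ for each $m$. I would in fact show the stronger statement $f^{(m)}-x^{(m)}\in F^{m+r-1}$. Write $f=x+g$ with $g\in F^r$. The sum rule in \eqref{eq:divided-powers-of-elements} gives
\[
 f^{(m)}=\sum_{i}x^{(m-i)}g^{(i)}.
\]
Each summand with $i\ge1$ has degree at least $(m-i)+ir\ge m+r-1$, because the divided-power expansion shows that $g^{(i)}$ has no terms of degree below $ir$.

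The one point needing care is that these degree estimates take place in $\OO(1)$. Each $f^{(m)}$ lies in $\OO(1;n)$ by Proposition~\ref{prop:residual-admissible-group}, and the filtration inequality is compatible with truncation. So the difference lies in $F^{m+r-1}\subseteq F^{r-1}$ of $\OO(1;n)$. This is the only mild obstacle.

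Combining the two estimates yields \eqref{eq:first-changed-coefficient}. When $r=2$, the $x^{(0)}$-coefficient shifts by $a_2$, consistent with the congruence.
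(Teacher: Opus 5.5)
Your proof is correct and follows the paper's argument essentially step for step: the same split $\sigma\ast b=h^{-1}\delta(h)+h\sigma(b)$, the same estimate $(h^{-1}-1)\delta(h)\in F^{r-1}$ for the twist term, and the same expansion $f^{(m)}-x^{(m)}=\sum_{i\ge1}x^{(m-i)}g^{(i)}\in F^{m+r-1}$ for $h\sigma(b)$. You also make explicit a point the paper leaves implicit: the degree count is done in $\OO(1)$ and transfers to $\OO(1;n)$ because $f^{(m)}\in\OO(1;n)$.
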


\begin{proof}
Put $f=\sigma(x)$ and $h=\delta(f)$.
We estimate the two terms in~\eqref{eq:connection-form-residual-action}
separately.  By the choice of $r$,
\[
 h\equiv1+a_rx^{(r-1)}\pmod{F^r},
 \qquad
 \delta(h)\equiv a_rx^{(r-2)}\pmod{F^{r-1}}.
\]
Since $h^{-1}-1\in F^{r-1}$ and $r\ge2$,
\[
 (h^{-1}-1)\delta(h)\in F^{2r-3}\subseteq F^{r-1},
\]
so
\begin{equation}
 h^{-1}\delta(h)
 \equiv a_rx^{(r-2)}\pmod{F^{r-1}}.
 \label{eq:log-Jacobian-leading-term}
\end{equation}
For the term $h\sigma(b)$, write $u=f-x\in F^r$.
The divided-power expansion in $\OO(1)$ gives, for $1\le s<p^n$,
\[
 f^{(s)}-x^{(s)}
 =\sum_{i=1}^s x^{(s-i)}u^{(i)}.
\]
Every term in the $i$th summand has degree at least
$(s-i)+ir\ge s+r-1$; hence
$\sigma(x^{(s)})-x^{(s)}\in F^{s+r-1}$.
Since $\sigma(1)=1$, it follows that $\sigma(b)-b\in F^r$ and
$h\sigma(b)-b\in F^{r-1}$.
Combining these inclusions with~\eqref{eq:log-Jacobian-leading-term}
proves~\eqref{eq:first-changed-coefficient}.
\end{proof}

Define the $(n+1)$-dimensional subspace
\begin{equation}
 S_n=
 \bigoplus_{j=1}^n\kk x^{(p^j-2)}
 \mathbin{\oplus}\kk x^{(p^n-1)},
 \label{eq:residual-slice}
\end{equation}
and let $G_n^1\subseteq G_n$ consist of the elements with $a_1=1$
in~\eqref{eq:admissible-substitution}.  This is a subgroup because
the coefficient $a_1$ is multiplicative under composition.

\Needspace{17\baselineskip}
\begin{proposition}
\label{prop:residual-orbit}
For every $b\in\OO(1;n)$, there is a unique $g\in G_n^1$ such that
$g\mathbin{\ast}b\in S_n$.
For $\alpha,\alpha'\in\kk^n$ and $\beta,\beta'\in\kk$, the elements
$b_{\alpha,\beta}$ and $b_{\alpha',\beta'}$
in~\eqref{eq:normal-form-parameters} lie in the same
$G_n$-orbit if and only if there is $c\in\kk^\times$ such that
\begin{equation}
 \alpha'_j=c^{p^j-1}\alpha_j\quad(1\le j\le n),
 \qquad
 \beta'=c^{p^n}\beta.
 \label{eq:weighted-parameter-equivalence}
\end{equation}
Every element of $G_n$ carrying one point of $S_n$ to another is
$(\rho_c,c)$ for some $c\in\kk^\times$, where
$\rho_c(x^{(r)})=c^r x^{(r)}$.
\end{proposition}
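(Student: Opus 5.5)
We prove existence and uniqueness of the normal form by successive coefficient elimination based on Lemma~\ref{lem:first-changed-coefficient}, and then compute the stabilizer-type statement by hand. The degrees $r$ that an element of $G_n^1$ can affect at the leading order are exactly those with $2\le r<p^n$ and $r\ne p^j$ ($1\le j<n$), by Proposition~\ref{prop:residual-admissible-group}. The coefficient changed is then that of $x^{(r-2)}$. As $r$ ranges over this set, $r-2$ ranges over $\{0,\ldots,p^n-3\}\setminus\{p^j-2:1\le j<n\}$. This is precisely the complement of the indices $p^j-2$ ($1\le j\le n$) and $p^n-1$ appearing in $S_n$; note that $p^n-2$ belongs to $S_n$ via $j=n$.

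\emph{Existence.} We proceed by induction on the filtration. Suppose $b$ agrees with an element of $S_n$ modulo $F^{k}$. If $k\notin\{p^j-2\}\cup\{p^n-1\}$, let $\sigma$ be the admissible automorphism with $f=x+a\,x^{(k+2)}$. Lemma~\ref{lem:first-changed-coefficient} lets us choose $a$ so that the coefficient of $x^{(k)}$ is killed without changing lower coefficients. If $k$ is a slice index, we do nothing and move on to $k+1$. Composing these steps gives $g\in G_n^1$ with $g\ast b\in S_n$.

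\emph{Uniqueness.} Suppose $g\ast b$ and $g'\ast b$ both lie in $S_n$. Then $\tau=g'g^{-1}\in G_n^1$ maps a point of $S_n$ to a point of $S_n$. If $\tau\ne\Id$, Lemma~\ref{lem:first-changed-coefficient} shows that the first changed coefficient sits in degree $r-2$ with $r$ a permitted degree, hence at a non-slice index. The image therefore has a nonzero non-slice coefficient, which is a contradiction. So $\tau=\Id$, and since $(\sigma,h)$ is determined by $\sigma$, this gives $g=g'$.

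\emph{Scalings.} Let $(\sigma,h)\in G_n$ with $f=\sigma(x)$ and $a_1=c$. Then $(\sigma,h)=(\rho_c,c)\circ\sigma_1$, where $\sigma_1=\rho_c^{-1}\sigma\in G_n^1$; here $\rho_c$ corresponds to $f=cx$ and $h=\delta(cx)=c$. A direct computation from \eqref{eq:connection-form-residual-action} gives $\rho_c\ast b=c\rho_c(b)$. Hence $\rho_c\ast x^{(m)}=c^{m+1}x^{(m)}$, which preserves $S_n$ and scales the coefficient of $x^{(p^j-2)}$ by $c^{p^j-1}$ and that of $x^{(p^n-1)}$ by $c^{p^n}$. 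This is exactly \eqref{eq:weighted-parameter-equivalence}.

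Now suppose some $g\in G_n$ carries $s\in S_n$ to $s'\in S_n$. Write $g=\rho_c\sigma_1$. Then $\sigma_1\ast s=\rho_c^{-1}\ast s'\in S_n$, and $\sigma_1$ also carries $s$ to $s\in S_n$ via the identity. By uniqueness, $\sigma_1=\Id$, so $g=(\rho_c,c)$. This yields both directions of the orbit criterion.

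The main obstacle is the bookkeeping in the existence step. One must check that the modification at degree $k$ leaves the coefficients below $k$ untouched; this is exactly what the congruence modulo $F^{r-1}$ in Lemma~\ref{lem:first-changed-coefficient} provides. One must also check that the index set matches. The degree $r=1$ is excluded, and $r=p^n$ is not allowed, so index $p^n-2$ can never be eliminated; this is consistent with the slice. The index $p^n-1$ would require $r=p^n+1$ and is likewise never reached. Finally, no two of these obstructions overlap.
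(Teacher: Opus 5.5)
Your proposal is correct and follows essentially the same approach as the paper: degree-by-degree elimination using $\sigma(x)=x+c\,x^{(m+2)}$ and Lemma~\ref{lem:first-changed-coefficient}, uniqueness by showing that a nonidentity element of $G_n^1$ creates a nonzero coefficient at the non-slice degree $r-2$, and the factorization $\sigma=\rho_c\tau$ with $\rho_c\ast x^{(s)}=c^{s+1}x^{(s)}$ to reduce to the case $a_1=1$. One small point: at any step where the coefficient to be cancelled is already zero, take the identity, since Lemma~\ref{lem:first-changed-coefficient} assumes $\sigma\ne\Id$.
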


\begin{proof}
We eliminate the coefficients outside the degrees occurring in $S_n$,
starting with the lowest degree.
For each integer $m$ with $0\le m\le p^n-3$ and
\[
 m\notin\{p^j-2\mid 1\le j<n\},
\]
Proposition~\ref{prop:residual-admissible-group} gives an admissible
automorphism $\sigma_m$ with
\[
 \sigma_m(x)=x+c_mx^{(m+2)}.
\]
Lemma~\ref{lem:first-changed-coefficient} gives
\[
 [x^{(s)}](\sigma_m\ast b)=
 \begin{cases}
  [x^{(s)}]b,&0\le s<m,\\
  [x^{(m)}]b+c_m,&s=m.
 \end{cases}
\]
At each step, choose $c_m=-[x^{(m)}]b$ for the current $b$.
The displayed identity shows that this cancels the coefficient of
degree $m$ without changing the coefficients already eliminated.
Take $\sigma_m=\Id$ when $c_m=0$.
Their composition gives $g\in G_n^1$ with $g\mathbin{\ast}b\in S_n$.

For uniqueness, we show that an element of $G_n^1$ carrying one point
of $S_n$ to another must be the identity.
Suppose that $b,b'\in S_n$ and that $b'=\sigma\mathbin{\ast}b$, where
$\sigma\in G_n^1$.  If $\sigma$ is not the identity, put
$r=\min\{s\mid 2\le s<p^n,\ a_s\ne0\}$.
By~\eqref{eq:admissible-substitution}, $r\ne p^j$ for $1\le j<n$.
Since $r\le p^n-1$, we also have $r-2\le p^n-3$.
Thus $r-2$ is not one of the degrees occurring in $S_n$.  But
Lemma~\ref{lem:first-changed-coefficient} shows that the
coefficient of $x^{(r-2)}$ changes from $0$ to $a_r\ne0$, a contradiction.
Thus $\sigma$ is the identity.
If $g_1,g_2\in G_n^1$ both carry a given $b\in\OO(1;n)$ into $S_n$, then
$g_2g_1^{-1}$ sends $g_1\mathbin{\ast}b$ to $g_2\mathbin{\ast}b$.
It is therefore the identity, and $g_1=g_2$.

It remains to account for automorphisms whose linear coefficient
need not be $1$.
For $c\in\kk^\times$, let $\rho_c$ be the admissible automorphism
$\rho_c(x)=cx$.  Here $h=c$, so $\rho_c\mathbin{\ast}b=c\rho_c(b)$ and
\begin{equation}
 \rho_c\mathbin{\ast}x^{(s)}=c^{s+1}x^{(s)}.
 \label{eq:weighted-scaling-on-divided-powers}
\end{equation}
In particular, $\rho_c$ preserves $S_n$.
Every admissible automorphism $\sigma$ has a unique factorization
$\sigma=\rho_c\tau$, with $c=a_1$ and $\tau=\rho_c^{-1}\sigma$;
the linear coefficient of $\tau(x)$ is $1$.
If $b,b'\in S_n$ and $b'=(\rho_c\tau)\mathbin{\ast}b$, then
\[
 \tau\mathbin{\ast}b=\rho_c^{-1}\mathbin{\ast}b'\in S_n
 \quad\Longrightarrow\quad \tau=\Id.
\]
Thus every element carrying one point of $S_n$ to another is a scaling,
and~\eqref{eq:weighted-scaling-on-divided-powers} gives
\eqref{eq:weighted-parameter-equivalence}.
Conversely, $\rho_c$ realizes every relation
\eqref{eq:weighted-parameter-equivalence}.
\end{proof}

\begin{theorem}[Theorme A]
\label{thm:final-classification}
Every finite-dimensional simple Novikov algebra over $\kk$ is isomorphic
to $\mathcal{N}(\alpha,\beta)$ for some $n\ge0$, $\alpha\in\kk^n$, and
$\beta\in\kk$, with $\beta=1$ if $n=0$.
Two such normal forms are isomorphic if and only if they have the same
dimension and their parameters satisfy
\eqref{eq:weighted-parameter-equivalence} for some $c\in\kk^\times$.
Every isomorphism $\mathcal{N}(\alpha,\beta)\to\mathcal{N}(\alpha',\beta')$ is
\[
 \phi_c=c^{-1}\rho_c,\qquad
 \phi_c(y_i)=c^iy_i\quad(-1\le i\le p^n-2),
\]
for a unique such scalar $c$, and every such scalar gives an isomorphism.
For $n=0$, set $\rho_c=\Id_\kk$.
\end{theorem}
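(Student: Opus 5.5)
The plan is to assemble Theorem A from Proposition~\ref{prop:normalized-differential-data}, Theorem~\ref{thm:change-data}, Proposition~\ref{prop:residual-admissible-group} and Proposition~\ref{prop:residual-orbit}.

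\emph{Existence.} By Proposition~\ref{prop:normalized-differential-data}, every finite-dimensional simple Novikov algebra is isomorphic to $\OO(1;n)(\delta,b)$ for some $n\ge0$, with $b=1$ when $n=0$. For $n=0$ this is $\mathcal N(\emptyset,1)$. For $n>0$, Proposition~\ref{prop:residual-orbit} supplies $g\in G_n^1$ with $g\ast b\in S_n$. Every element of $S_n$ has the form $b_{\alpha,\beta}$. Since $g=(\sigma,h)$ lies in $G_n$, Theorem~\ref{thm:change-data} shows that $h^{-1}\sigma$ is an isomorphism $\OO(1;n)(\delta,b)\to\mathcal N(\alpha,\beta)$.

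\emph{Isomorphism criterion.} Isomorphic algebras have equal dimension, and $\dim\mathcal N(\alpha,\beta)=p^n$, so we may fix $n$. The case $n=0$ is trivial: both algebras are $\kk$, the only isomorphism is the identity, and this is $\phi_c$ with $c=1$ under the convention $\rho_c=\Id_\kk$. Note that $c$ is not unique in this case unless one reads the statement as vacuous; I would remark on this.

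Now let $n>0$. By Lemma~\ref{lem:normalized-simple} both algebras are simple. Theorem~\ref{thm:change-data}, with $\mathcal A=\mathcal A'=\OO(1;n)$ and $\partial=\partial'=\delta$, shows that every isomorphism has the form $h^{-1}\sigma$ with $(\sigma,h)\in G_n$ and $(\sigma,h)\ast b_{\alpha,\beta}=b_{\alpha',\beta'}$. The representation $h^{-1}\sigma$ is unique. Conversely, every such pair gives an isomorphism. The last part of Proposition~\ref{prop:residual-orbit} forces $(\sigma,h)=(\rho_c,c)$. Formula~\eqref{eq:weighted-scaling-on-divided-powers} then gives exactly the relations~\eqref{eq:weighted-parameter-equivalence}. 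Conversely, $(\rho_c,c)$ realizes any such relation.

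\emph{Explicit form and uniqueness of $c$.} With $y_i=x^{(i+1)}$,
\[
\phi_c(y_i)=c^{-1}\rho_c(x^{(i+1)})=c^{-1}c^{i+1}x^{(i+1)}=c^iy_i .
\]
For uniqueness of $c$, note that $\phi_c(y_0)=y_0$ and $\phi_c(y_1)=c\,y_1$. Here $y_1=x^{(2)}$ exists because $p^n\ge3$. So $c$ is read off from the isomorphism. Equivalently, the uniqueness of $(\sigma,h)$ in Theorem~\ref{thm:change-data} gives $c=h$ directly.

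The only delicate step is making sure that the orbit statement in Proposition~\ref{prop:residual-orbit} is applied to pairs $(\sigma,h)$ and not only to $\sigma$. This is legitimate because Proposition~\ref{prop:residual-admissible-group} identifies $G_n$ with the group of admissible automorphisms via $h=\delta(\sigma(x))$. For $\rho_c$ this gives $h=c$. Everything else is bookkeeping.
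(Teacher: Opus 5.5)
Your proposal is correct and is essentially the paper's proof. Existence comes from Proposition~\ref{prop:normalized-differential-data} together with the slice $S_n$ of Proposition~\ref{prop:residual-orbit}. The isomorphism criterion and the form $c^{-1}\rho_c$ come from Theorem~\ref{thm:change-data} and the last assertion of Proposition~\ref{prop:residual-orbit}. The paper reads off $c$ from $\phi_c(1)=c^{-1}1$ rather than from $y_1$, which avoids needing $p^n\ge3$.

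One correction: your aside about $n=0$ is mistaken. There \eqref{eq:weighted-parameter-equivalence} reduces to $\beta'=c^{p^0}\beta=c\beta$ with $\beta=\beta'=1$, which forces $c=1$. So $c$ is unique, as the paper notes; consistently, $c^{-1}\Id_\kk$ is an algebra automorphism of $\kk$ only for $c=1$.
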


\begin{proof}
For $n=0$, the model is $\kk$ and $\beta=\beta'=1$, so
\eqref{eq:weighted-parameter-equivalence} gives $c=1$.
Suppose $n>0$.  By Proposition~\ref{prop:normalized-differential-data},
every simple Novikov algebra of dimension $p^n$ is isomorphic to
$\OO(1;n)(\delta,b)$.
By Proposition~\ref{prop:residual-orbit}, we may reduce $b$ to
$b_{\alpha,\beta}$.
The divided-power product gives
\begin{align*}
 x^{(i+1)}\delta(x^{(j+1)})
 &=\binom{i+j+1}{j}x^{(i+j+1)},\\
 x^{(i+1)}x^{(j+1)}x^{(p^s-2)}
 &=\binom{i+j+2}{i+1}\binom{i+j+p^s}{p^s-2}
   x^{(i+j+p^s)},\\
 x^{(i+1)}x^{(j+1)}x^{(p^n-1)}
 &=\delta_{i,-1}\delta_{j,-1}x^{(p^n-1)}.
\end{align*}
For $j=-1$, both sides of the first equality are zero by our convention
for binomial coefficients.  In the last equality, every term with $i>-1$ or $j>-1$
has degree at least $p^n$ and vanishes.
With $y_i=x^{(i+1)}$, substituting these identities into
$u\circ v=u(\delta(v)+b_{\alpha,\beta}v)$ gives~\eqref{eq:explicit-algebra-product}.

By Theorem~\ref{thm:change-data} and
Proposition~\ref{prop:residual-orbit}, the isomorphism criterion is
\eqref{eq:weighted-parameter-equivalence}, and every isomorphism between
normal forms is $c^{-1}\rho_c$.
The dimension determines $n$, and $\phi_c(1)=c^{-1}1$ determines $c$.
\end{proof}

\subsection{Further properties}
\label{subsec:classification-consequences}

\begin{corollary}
\label{cor:automorphism-group}
Let $J=\{j\mid1\le j\le n,\ \alpha_j\ne0\}$.  Via $c\mapsto c^{-1}\rho_c$, the
$\kk$-algebra automorphism group of $\mathcal{N}(\alpha,\beta)$ is
\[
 \Aut(\mathcal{N}(\alpha,\beta))\cong
 \begin{cases}
  \{1\},&\beta\ne0,\\
  \kk^\times,&\beta=0,\ J=\varnothing,\\
  \{c\in\kk^\times:c^{p^g-1}=1\},
       &\beta=0,\ J\ne\varnothing,\quad g=\gcd J.
 \end{cases}
\]
In the last case it is cyclic of order $p^g-1$.
\end{corollary}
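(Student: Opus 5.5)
By Theorem~\ref{thm:final-classification}, applied with $(\alpha',\beta')=(\alpha,\beta)$, every automorphism of $\mathcal{N}(\alpha,\beta)$ is $\phi_c=c^{-1}\rho_c$ for a unique $c\in\kk^\times$. The admissible scalars are exactly those satisfying \eqref{eq:weighted-parameter-equivalence} with $\alpha'=\alpha$ and $\beta'=\beta$:
\[
 c^{p^j-1}\alpha_j=\alpha_j\quad(1\le j\le n),\qquad c^{p^n}\beta=\beta .
\]
Since $\rho_c\rho_d=\rho_{cd}$, we have $\phi_c\phi_d=\phi_{cd}$. Hence $c\mapsto\phi_c$ is an injective homomorphism from the subgroup of these scalars in $\kk^\times$ onto $\Aut(\mathcal{N}(\alpha,\beta))$. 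For $n=0$ the group is trivial, and $\beta=1\ne0$, so this fits the first case.

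It remains to solve the equations, which I split into three cases.

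\emph{Case $\beta\ne0$.} The condition $c^{p^n}=1$ forces $c=1$, since $x\mapsto x^{p^n}$ is injective in characteristic $p$. The group is therefore trivial.

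\emph{Case $\beta=0$, $J=\varnothing$.} There are no conditions on $c$, so the group is all of $\kk^\times$.

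\emph{Case $\beta=0$, $J\ne\varnothing$.} Here $c$ must satisfy $c^{p^j-1}=1$ for every $j\in J$. I would use the standard identity
\[
 \gcd(p^a-1,p^b-1)=p^{\gcd(a,b)}-1,
\]
proved by the Euclidean algorithm on exponents via $p^a-1=p^{a-b}(p^b-1)+(p^{a-b}-1)$. Iterating it over $J$ gives
\[
 \gcd\{p^j-1:j\in J\}=p^g-1,\qquad g=\gcd J .
\]
The set of $c$ with $c^{p^j-1}=1$ for all $j\in J$ is then the set of $c$ with $c^{p^g-1}=1$. This follows from writing $p^g-1$ as an integer combination of the $p^j-1$, and conversely from $p^g-1\mid p^j-1$.

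Finally, $p^g-1$ is prime to $p$, so $X^{p^g-1}-1$ is separable. Because $\kk$ is algebraically closed, it has exactly $p^g-1$ roots. A finite subgroup of $\kk^\times$ is cyclic, so the group is cyclic of order $p^g-1$.

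The only mildly nontrivial step is the gcd identity. Everything else is read off directly from Theorem~\ref{thm:final-classification}.
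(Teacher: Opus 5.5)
Your proposal is correct and follows essentially the same route as the paper's proof. Both read off the scalar conditions from Theorem~\ref{thm:final-classification}, use injectivity of the $p^n$-th power map for $\beta\ne0$ (the paper writes this as $(c-1)^{p^n}=0$), apply $\gcd_{j\in J}(p^j-1)=p^{\gcd J}-1$ in the last case, and get the group structure from $\phi_c\phi_d=\phi_{cd}$. Beyond the paper, you also prove the gcd identity and justify cyclicity via separability and the fact that finite subgroups of $\kk^\times$ are cyclic; the paper leaves both implicit.
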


\begin{proof}
Theorem~\ref{thm:final-classification} identifies the automorphisms with
scalars satisfying $c^{p^j-1}=1$ for $j\in J$ and $c^{p^n}\beta=\beta$.
If $\beta\ne0$, then $(c-1)^{p^n}=0$, so $c=1$.
For $\beta=0$ and $J\ne\varnothing$, use
$\gcd_{j\in J}(p^j-1)=p^{\gcd J}-1$.
Finally, $\phi_c\phi_d=\phi_{cd}$, so this identification respects the
group operation.
\end{proof}

\Needspace{7\baselineskip}
\begin{corollary}
\label{cor:automatic-idempotent}
Every finite-dimensional simple Novikov algebra over $\kk$ contains a nonzero
idempotent.  In $\mathcal{N}(\alpha,\beta)$, one may take $1$ for $n=0$
and $x=x^{(1)}$ for $n>0$.
\end{corollary}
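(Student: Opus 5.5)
By Theorem~\ref{thm:final-classification}, every finite-dimensional simple Novikov algebra is isomorphic to some $\mathcal N(\alpha,\beta)$. Isomorphisms carry idempotents to idempotents. So it suffices to check the two stated elements in the normal forms. For $n=0$ the algebra is $\kk$ with its usual product, and $1\circ1=1\ne0$.

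For $n>0$, I will compute $x\circ x$ directly from the product $u\circ v=u(\delta(v)+b_{\alpha,\beta}v)$. Since $\delta(x)=1$,
\[
 x\circ x=x\bigl(1+b_{\alpha,\beta}x\bigr)=x+x^2b_{\alpha,\beta}.
\]
I need $x^2b_{\alpha,\beta}=0$. We have $x^2=2x^{(2)}$. Each term of $b_{\alpha,\beta}$ has degree $p^j-2$ (for $j\ge1$) or $p^n-1$. Hence each term of $x^{(2)}b_{\alpha,\beta}$ has degree $p^j$ or $p^n+1$.

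The term of degree $p^n+1$ vanishes in $\OO(1;n)$. For a term of degree $p^j$, Lucas's congruence gives
\[
 x^{(2)}x^{(p^j-2)}=\binom{p^j}{2}x^{(p^j)}.
\]
The coefficient $\binom{p^j}{2}$ is $0$ in $\kk$: the digits of $2$ are not dominated by those of $p^j$, because $p>2$ and $j\ge1$. When $j=n$, the product also vanishes simply because its degree is $p^n$.

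Thus $x\circ x=x\neq0$, so $x$ is a nonzero idempotent. In the basis of \eqref{eq:explicit-algebra-product} this is $y_0\circ y_0=y_0$, which gives a consistency check.

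The only delicate point is the binomial vanishing for the $\alpha_j$ terms with $j<n$, where the degree $p^j$ is still below $p^n$. There the vanishing comes from the characteristic rather than from truncation. This is where $p>2$ enters: for $p=2$, the coefficient $\binom{2}{2}=1$ would not vanish.
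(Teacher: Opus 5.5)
Your proposal is correct and follows the paper's own proof: reduce to the normal form $\mathcal{N}(\alpha,\beta)$, use $\delta(x)=1$ and $x^2=2x^{(2)}$, and kill $b_{\alpha,\beta}x^{(2)}$ because $\binom{p^j}{2}=0$ for $p>2$ and the $x^{(p^n-1)}$ term is truncated. One small attribution slip: the identity $x^{(2)}x^{(p^j-2)}=\binom{p^j}{2}x^{(p^j)}$ comes from the divided-power product rule, and Lucas's congruence is only needed to see that this coefficient vanishes.
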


\begin{proof}
The case $n=0$ is immediate.  For $n>0$, one has
$\delta(x)=1$ and $x^2=2x^{(2)}$.
For $1\le j\le n$, the divided-power product gives
\[
 x^{(p^j-2)}x^{(2)}=\binom{p^j}{2}x^{(p^j)}=0,
 \qquad x^{(p^n-1)}x^{(2)}=0.
\]
The first binomial coefficient vanishes because $p>2$;
the second product vanishes by truncation.
Thus $b_{\alpha,\beta}x^2=0$, and
$x\circ x=x\delta(x)+b_{\alpha,\beta}x^2=x$.
\end{proof}

\Needspace{9\baselineskip}
\begin{proposition}
\label{prop:characteristic-polynomial}
Let $n\ge0$, $\alpha,\alpha'\in\kk^n$, and $\beta,\beta'\in\kk$,
with $\beta=\beta'=1$ if $n=0$.
\begin{enumerate}[label=\textup{(\roman*)},leftmargin=2em]
\item
For the element
\[
 s_\alpha=1-\sum_{j=1}^n\alpha_jx^{(p^j-1)}\in\OO(1;n),
\]
the characteristic and minimal polynomials of the left multiplication
operator $\Lop_{s_\alpha}$ on $\mathcal{N}(\alpha,\beta)$ are both
\begin{equation}
 P_{\alpha,\beta}(X)
 =X^{p^n}+\sum_{j=1}^n\alpha_j^{\,p^{n-j}}X^{p^{n-j}}-\beta.
 \label{eq:canonical-characteristic-polynomial}
\end{equation}
\item
The algebras $\mathcal{N}(\alpha,\beta)$ and
$\mathcal{N}(\alpha',\beta')$ are isomorphic
if and only if there is $c\in\kk^\times$ such that
\begin{equation}
 P_{\alpha',\beta'}(X)=c^{p^n}P_{\alpha,\beta}(X/c).
 \label{eq:characteristic-polynomial-scaling}
\end{equation}
\end{enumerate}
\end{proposition}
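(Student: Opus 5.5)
The plan is to compute $\Lop_{s_\alpha}$ as a first-order differential operator on $\mathcal A=\OO(1;n)$ and to apply the Jacobson $p$-power formulas. The case $n=0$ is trivial: $\Lop_1=\Id$ and $P=X-1$. For $n>0$, the key observation is that $b_{\alpha,\beta}=-\delta(s_\alpha)+\beta x^{(p^n-1)}$, because $\delta(x^{(p^j-1)})=x^{(p^j-2)}$. Writing $s=s_\alpha$, we also have $sx^{(p^n-1)}=x^{(p^n-1)}$ by truncation. Hence
\[
 \Lop_s=m_s(\delta+m_b)=E+m_c,\qquad E=s\delta,\qquad c=-s\delta(s)+\beta x^{(p^n-1)} .
\]
Here $E$ is a derivation, and $\mathcal A$ is $E$-simple since $s$ is a unit.

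First I would show that $E$ satisfies the $p$-polynomial identity
\[
 E^{p^n}+\sum_{j=1}^n\alpha_j^{p^{n-j}}E^{p^{n-j}}=0 .
\]
The powers $E^{p^k}$ are derivations. Using Hochschild's formula $(s\delta)^p=s^p\delta^p+(s\delta)^{p-1}(s)\,\delta$, or more directly by evaluating on the generators $x_i=x^{(p^{i-1})}$, one checks that the left-hand side kills every $x_i$. This reduces to a computation of $E^{p^k}(x)$ in low degrees, where the correction terms of $s$ enter exactly at degrees $p^j-1$. This is the step I expect to be the main obstacle, and I would try first to compute $E^{p^k}$ modulo the filtration $F^\bullet$.

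Next I would apply Jacobson's formula $(E+m_c)^p=E^p+m_{c^p+E^{p-1}(c)}$ iteratively. This gives
\[
 P_{\alpha,\beta}(\Lop_s)=m_{\gamma}\quad\text{for some }\gamma\in\mathcal A .
\]
I would then show that $\gamma=0$, that is, that the $\beta x^{(p^n-1)}$ term produces exactly the constant $\beta$ absorbed by $-\beta$. The cleanest check is to evaluate both sides on $1$: since $P(\Lop_s)$ is of the form $m_\gamma$, it suffices to compute $P(\Lop_s)(1)$, which can be done by tracking degrees.

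For cyclicity, I would argue that for every $\lambda\in\kk$ the operator $\Lop_s-\lambda\Id=s\delta+m_{c-\lambda}$ has rank at least $p^n-1$. Indeed, its matrix in the divided-power basis has the entries $s(0)=1$ on the subdiagonal lowering $x^{(r)}\mapsto x^{(r-1)}$, plus terms in the other direction that do not affect the minor. So every eigenvalue has geometric multiplicity $1$, and the minimal polynomial equals the characteristic polynomial, which has degree $p^n$. The minimal polynomial divides the monic degree-$p^n$ polynomial $P$, so both polynomials coincide with $P$.

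For (ii), I would use Theorem~\ref{thm:final-classification}. Comparing coefficients, we have
\[
 c^{p^n}P_{\alpha,\beta}(X/c)=X^{p^n}+\sum_j(c^{p^j-1}\alpha_j)^{p^{n-j}}X^{p^{n-j}}-c^{p^n}\beta .
\]
Since Frobenius is injective on $\kk$, equation~\eqref{eq:characteristic-polynomial-scaling} holds if and only if $\alpha'_j=c^{p^j-1}\alpha_j$ and $\beta'=c^{p^n}\beta$, which is exactly~\eqref{eq:weighted-parameter-equivalence}.
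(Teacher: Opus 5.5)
\textbf{There is a genuine gap: the identity $P_{\alpha,\beta}(\Lop_{s})=0$, on which everything rests, is never proved.}

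The outer structure of your argument is sound. Your minor argument for $\Lop_{s}-\lambda\Id$ correctly shows that $\Lop_{s}$ is cyclic, so $P_{\alpha,\beta}(\Lop_{s})=0$ would finish (i). Your part (ii) is the paper's argument.

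Your Step~1 is not a reduction, because the claimed identity is the statement itself in disguise. Since $bs_\alpha=b$ and $D(s_\alpha)=\beta x^{(p^n-1)}$, one has $m_s^{-1}\Lop_{s}m_s=s\delta+\beta m_{x^{(p^n-1)}}$. So for $\beta=0$, $\Lop_{s}$ is conjugate to $E=s\delta$. By Cayley--Hamilton and cyclicity, the identity $E^{p^n}+\sum_j\alpha_j^{p^{n-j}}E^{p^{n-j}}=0$ is equivalent to the case $\beta=0$ of (i). ``One checks'' therefore hides the entire content.

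The route you indicate is also not viable as described, for two reasons:
\begin{enumerate}[label=\textup{(\alph*)},leftmargin=2em]
\item A derivation of $\OO(1;n)$ with $n\ge2$ is not determined by its value on $x$; for example $\partial_2(x)=0$.
\item Hochschild's formula does not iterate. Using Lucas's congruence, one application gives
\[
 E^p+\alpha_1E=\delta^p+g\,\delta,\qquad
 g=-\sum_{j\ge2}\alpha_jx^{(p^j-p)}+\alpha_1\sum_{j\ge2}\alpha_jx^{(p^j-1)}.
\]
The next step is then the $p$-th power of a sum of two noncommuting derivations. That needs Jacobson's commutator terms and an inductive invariant you have not supplied.
\end{enumerate}

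Your Step~2 ($\gamma=0$ ``by tracking degrees'') is also unproved, but it is easy to repair.
\begin{enumerate}[label=\textup{(\alph*)},leftmargin=2em]
\item Since $m_\gamma=P_{\alpha,\beta}(\Lop_{s})$ commutes with $\Lop_{s}=s\delta+m_b$, you get $m_{s\delta(\gamma)}=0$. Hence $\gamma\in\kk$.
\item Cyclicity then makes $P_{\alpha,\beta}-\gamma$ the characteristic polynomial.
\item $\Lop_{s}$ is conjugate to $s\delta+\beta m_{x^{(p^n-1)}}=m_s\bigl(\delta+\beta m_{x^{(p^n-1)}}\bigr)$. Here $m_s$ is unitriangular, and the second factor is a weighted $p^n$-cycle with $p^n$ odd. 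So $\det\Lop_{s}=\beta$, and comparing constant terms forces $\gamma=0$.
\end{enumerate}

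The missing idea is for Step~1, and the paper avoids $p$-power formulas entirely. It writes the conjugated operator explicitly via Lucas's congruence: for $1\le r<p^n$, $E(x^{(r)})=x^{(r-1)}-\sum_{p^j\mid r-1}\alpha_jx^{(r+p^j-2)}$, omitting terms of degree at least $p^n$. For $\alpha_n=\beta=0$ this matrix is block upper triangular, with $p$ diagonal blocks equal to the matrix for $(\alpha_1,\ldots,\alpha_{n-1})$. Restoring $\alpha_n$ and $\beta$ changes only the last row, with cofactors $X$ and $1$. This gives $C_n=C_{n-1}^p+\alpha_nX$ and $\det(X\Id-E)=C_n-\beta$ directly.
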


\begin{proof}
\textup{(i)} For $n=0$, one has $s_\alpha=1$, $\Lop_{s_\alpha}=\Id$, and
$P_{\alpha,1}(X)=X-1$.  Suppose $n>0$, and put $q=p^n$.
The lowest base-$p$ digit of $p^i-1$ is $p-1$, and that of $p^j-2$
is $p-2$.  Since $p>2$, Lucas's congruence gives
\[
 x^{(p^i-1)}x^{(p^j-2)}=0
 \qquad(1\le i,j\le n).
\]
Also $x^{(q-1)}$ annihilates the maximal ideal.  With
$b=b_{\alpha,\beta}$ and $D=\delta+m_b$, we therefore have
\[
 bs_\alpha=b,\qquad
 \delta(s_\alpha)=-\sum_{j=1}^n\alpha_jx^{(p^j-2)},\qquad
 D(s_\alpha)=\beta x^{(q-1)}.
\]
Since $s_\alpha(0)=1$, we may conjugate $\Lop_{s_\alpha}$ by
multiplication with $s_\alpha$.  The identities
$\Lop_{s_\alpha}=m_{s_\alpha}D$ and
$Dm_{s_\alpha}=m_{s_\alpha}\delta+m_{D(s_\alpha)}$ give
\begin{equation}
 m_{s_\alpha}^{-1}\Lop_{s_\alpha}m_{s_\alpha}
 =m_{s_\alpha}\delta+\beta m_{x^{(q-1)}}=:E.
 \label{eq:canonical-operator-conjugation}
\end{equation}

We use the ordered basis $x^{(0)},\ldots,x^{(q-1)}$, with rows and columns
indexed by $0,\ldots,q-1$.
For $1\le r<q$ and $1\le j\le n$, Lucas's congruence gives, in $\kk$,
\[
 \binom{r+p^j-2}{p^j-1}
 =
 \begin{cases}
  1,&p^j\mid r-1,\\
  0,&p^j\nmid r-1.
 \end{cases}
\]
Thus $E(1)=\beta x^{(q-1)}$ and, for $1\le r<q$,
\begin{equation}
 E(x^{(r)})=x^{(r-1)}-
 \sum_{\substack{1\le j\le n,\ p^j\mid r-1\\r+p^j-2<q}}
 \alpha_jx^{(r+p^j-2)}.
 \label{eq:canonical-operator-matrix}
\end{equation}
In particular, the entries immediately above the diagonal are $1$,
and all entries farther above the diagonal vanish.
To compare the minimal and characteristic polynomials, note that
$E(x^{(r)})\equiv x^{(r-1)}\pmod{F^r}$ for $1\le r<q$.
Induction gives
\[
 E^j(x^{(q-1)})\equiv x^{(q-1-j)}\pmod{F^{q-j}}
 \qquad(0\le j<q).
\]
These vectors form a basis, so the minimal polynomial of $E$ has
degree $q$ and therefore equals its characteristic polynomial.
The same holds for $\Lop_{s_\alpha}$ by~\eqref{eq:canonical-operator-conjugation}.

We now compute the characteristic polynomial recursively in $n$.
Let $C_n(X)$ be the characteristic polynomial of $E$ when $\beta=0$, and
set $C_0(X)=X$.
If also $\alpha_n=0$, grouping consecutive basis vectors into $p$
blocks of size $p^{n-1}$ makes the matrix block upper triangular.
A term indexed by $j<n$ contributes an entry in column
$mp^j+1$ and row $(m+1)p^j-1$, so it stays within one such block.  The only entries
joining different blocks come from $\delta$ and lie above the block
diagonal.  Translation by a multiple of $p^{n-1}$ preserves the
divisibility conditions in~\eqref{eq:canonical-operator-matrix}, so each
diagonal block is the matrix for $(\alpha_1,\ldots,\alpha_{n-1})$
in dimension $p^{n-1}$.
The characteristic polynomial is therefore $C_{n-1}(X)^p$.

Restoring $\alpha_n$ changes only the entry in row $q-1$, column $1$
by $-\alpha_n$, while restoring $\beta$ changes only row $q-1$, column $0$
by $\beta$.
Since both changes are in the last row, $\det(X\Id-E)$ is affine linear
in $\alpha_n$ and $\beta$, with cofactors independent of both parameters.
The minor for column $0$ is lower triangular with diagonal entries $-1$.
For column $1$, expansion along column $0$ gives a factor $X$ and a
lower triangular minor with diagonal entries $-1$.  Thus the cofactors are
\[
 \operatorname{cof}_{q-1,0}(X\Id-E)=(-1)^{q-1}(-1)^{q-1}=1,
 \qquad
 \operatorname{cof}_{q-1,1}(X\Id-E)=(-1)^qX(-1)^{q-2}=X.
\]
Thus
\begin{align*}
 C_n(X)&=C_{n-1}(X)^p+\alpha_nX
       =X^{p^n}+\sum_{j=1}^n\alpha_j^{p^{n-j}}X^{p^{n-j}},\\
 \det(X\Id-E)&=C_n(X)-\beta,
\end{align*}
which proves~\eqref{eq:canonical-characteristic-polynomial}.

\textup{(ii)} For two normal forms of dimension $p^n$,
\eqref{eq:characteristic-polynomial-scaling} is equivalent to
\[
 (\alpha'_j)^{p^{n-j}}=c^{p^n-p^{n-j}}\alpha_j^{p^{n-j}},
 \qquad \beta'=c^{p^n}\beta.
\]
Since the Frobenius map $t\mapsto t^p$ on $\kk$ is injective,
the first equalities give
$\alpha'_j=c^{p^j-1}\alpha_j$.
Theorem~\ref{thm:final-classification} now proves~\textup{(ii)}.
\end{proof}

\begin{remark}
For $n>0$ and $a,b\in\kk$, write the family
in~\cite{Osborn1992b,Xu1996} in the form
\begin{equation}
 \mathcal{X}_n(a,b)
 =\OO(1;n)\bigl((1+b z)\delta,a z\bigr),
 \qquad z=x^{(p^n-1)}.
 \label{eq:Xu-family}
\end{equation}
The parameters $a,b$ and the basis $y_i=x^{(i+1)}$ agree with
\cite[(1.5), p.~907]{Xu1996}.
The parameters $a,c$ in~\cite[Theorem~1.3(a), p.~2730]{Osborn1992b}
correspond to our $a,b$.
For $\partial=(1+bz)\delta$ and $h=1-bz=(1+bz)^{-1}$, one has
\[
 h\partial=\delta,\qquad
 \partial(h)+haz
 =-b(1+bz)x^{(p^n-2)}+az
 =-bx^{(p^n-2)}+az.
\]
Theorem~\ref{thm:change-data}, with $\sigma=\Id$, therefore gives
\begin{equation}
 \mathcal{X}_n(a,b)
 \cong \OO(1;n)\bigl(\delta,-b x^{(p^n-2)}+a z\bigr).
 \label{eq:X-normal-form}
\end{equation}
Thus this family corresponds to $\alpha_j=0$ for $1\le j<n$,
$\alpha_n=-b$, and $\beta=a$.
\end{remark}

\begin{remark}
\label{rem:Osborn-reduction}
The family~\eqref{eq:Xu-family} therefore exhausts the simple Novikov
algebras of dimension $p$, but not those of dimension $p^n$ for $n\ge2$.
By Corollary~\ref{cor:automatic-idempotent}, every normal form contains
a nonzero idempotent, so the idempotent hypothesis does not exclude
the missing classes.
For an explicit example, let
\[
 K=\kk[x_2]/(x_2^p),\qquad T=(1+x_2)\partial_2,\qquad
 \partial_*=\partial_1+x_1^{p-1}T.
\]
Consider the $p^2$-dimensional Novikov algebra
$\mathcal{N}_*=\OO(1;2)(\partial_*,0)$.  For $c\in K$, we have
\[
 \partial_*(c)=x_1^{p-1}T(c),\qquad
 \partial_*(cx_1^i)=icx_1^{i-1}\quad(1\le i<p).
\]
In particular, $\partial_*(cx_1)=c$ and
$\partial_*(x_1^2/2)=x_1$, so the right multiplications generate
all multiplication operators on $\OO(1;2)$.
Thus every Novikov ideal is an associative
$\partial_*$-stable ideal.  Since $(p-1)!=-1$, the displayed formulas give
$\partial_*^p(c)=-T(c)$ and $\partial_*^p(x_1)=0$.  Hence
\[
 \partial_*^p=-T,\qquad T^p=T\ne0.
\]
Every such ideal is therefore stable under both $\partial_1$ and $\partial_2$,
and is either $0$ or $\OO(1;2)$.  Hence $\mathcal{N}_*$ is simple.

To distinguish this algebra from $\mathcal X_2(a,b)$, we compare
right identities and right annihilators.
Its right annihilator $\{a:\mathcal N_*\circ a=0\}$ is
$\ker\partial_*=\kk1$.
The right identities of $\mathcal N_*$ are precisely
\[
 \partial_*(e)=1
 \quad\Longleftrightarrow\quad
 e=x_1+c,\qquad c\in\kk.
\]
For $\mathcal X_2(a,b)$, put $z=x^{(p^2-1)}$.
Comparing divided-power coefficients gives
\[
 (1+bz)\delta(e)+aze=1
 \quad\Longleftrightarrow\quad
 e=x+c,\qquad ac+b=0.
\]
An isomorphism preserves right identities.  Since $\mathcal N_*$
has more than one, an isomorphism
$\phi:\mathcal N_*\to\mathcal X_2(a,b)$ would force $a=b=0$.
Both right annihilators are then $\kk1$, so $\phi(1)=d1$ for some
$d\in\kk^\times$, and
\[
 \phi\partial_*\phi^{-1}
 =\phi L_1\phi^{-1}=L_{\phi(1)}=d\delta.
\]
This is impossible, since
\[
 \partial_*^{p^2}=(-T)^p=-T\ne0,
 \qquad (d\delta)^{p^2}=0.
\]
Thus $\mathcal N_*$ is not isomorphic to any $\mathcal X_2(a,b)$.

We can also identify its normal form by computing the characteristic
polynomial of $\Lop_1$.  On each $p$-dimensional invariant subspace
$(1+x_2)^j\kk[x_1]/(x_1^p)$, $0\le j<p$, we have
$\partial_*^p=-j\Id$ and hence $(\partial_*+j\Id)^p=0$.
Its characteristic polynomial is therefore $(X+j)^p=X^p+j$.
Consequently,
\begin{equation}
 \det(X\Id-\Lop_1)=\prod_{j=0}^{p-1}(X^p+j)=X^{p^2}-X^p.
 \label{eq:counterexample-characteristic-polynomial}
\end{equation}
In its normal form, the right annihilator is $\ker D$, where
$D=\delta+m_{b_{\alpha,\beta}}$.  Let $s_\alpha$ be as in
Proposition~\ref{prop:characteristic-polynomial}.
Since $s_\alpha$ is a unit and $P_{\alpha,\beta}(0)=-\beta$, the identity
$\Lop_{s_\alpha}=m_{s_\alpha}D$ shows that $\ker D\ne0$ precisely when $\beta=0$.
In that case $D(s_\alpha u)=s_\alpha\delta(u)$ for every $u\in\OO(1;2)$, so
$\ker D=s_\alpha\ker\delta=\kk s_\alpha$.
An isomorphism $\mathcal N_*\to\mathcal N(\alpha,0)$ therefore sends
$1$ to $cs_\alpha$ for some $c\in\kk^\times$.
Comparing characteristic polynomials gives
\[
 X^{p^2}-X^p=c^{p^2}P_{\alpha,0}(X/c),
 \qquad \alpha_2=0,\quad c^{p-1}\alpha_1=-1.
\]
Theorem~\ref{thm:final-classification} therefore gives
$\mathcal{N}_*\cong\mathcal{N}((-1,0),0)$,
which lies outside the family~\eqref{eq:Xu-family}.

The same example locates the gap in Osborn's reduction.
Taking the right identity $e=x_1$ and $u=1$ in the construction
of~\cite[Lemmas~3.5--3.6, pp.~2740--2742]{Osborn1992b}
gives the auxiliary algebra $\mathcal{A}_0=Kx_1\cong K$, with product
$(cx_1)\circ(dx_1)=cdx_1$ for $c,d\in K$.
On p.~2742, the auxiliary derivation $\partial$ is defined by
\[
 u\circ(u\circ t)=v\circ\partial(t)\quad(t\in\mathcal{A}_0),
 \qquad v=\frac{x_1^{p-1}}{(p-1)!}=-x_1^{p-1}.
\]
Here $u,v$ are denoted by $d,x_{p-2}$ in that proof.
For $c\in K$, we have
\[
 u\circ(u\circ(cx_1))=x_1^{p-1}T(c)
 =v\circ(-T(c)x_1).
\]
Since $t\mapsto v\circ t$ is injective on $\mathcal{A}_0$, this gives
\begin{equation}
 \partial(cx_1)=-T(c)x_1\quad(c\in K),\qquad
 \partial^p=\partial\ne0.
 \label{eq:Osborn-nonnilpotent-derivation}
\end{equation}
Since $T$ is a unit multiple of $\partial_2$, the algebra
$\mathcal{A}_0$ is $\partial$-simple, and $\ker\partial=\kk e$.
After proving $\partial$-simplicity, the argument uses a basis
$\{t^{(k)}:0\le k<p^r\}$ satisfying
\[
 t^{(0)}=e,\qquad
 \partial(t^{(k)})=t^{(k-1)}\quad(1\le k<p^r).
\]
Here $\dim\mathcal{A}_0=p$, so $r=1$ and this would force
$\partial^p=0$, contrary to~\eqref{eq:Osborn-nonnilpotent-derivation}.
Thus $\partial$-simplicity does not justify this choice of basis.
The use of~\cite[Lemma~3.6]{Osborn1992b} immediately before
\cite[Theorem~2.9, p.~914]{Xu1996} brings the same gap into the latter reduction.
\end{remark}

\subsection{Novikov--Poisson algebras with simple Novikov part}
\label{subsec:novikov-poisson-classification}

\begin{definition}[\cite{Xu1996,Xu1997}]
A \emph{Novikov--Poisson algebra} is a vector space $\mathcal A$ with a
commutative associative product $\cdot$ and a Novikov product $\circ$
satisfying
\begin{align*}
 (x\cdot y)\circ z&=x\cdot(y\circ z),\\
 (x\circ y)\cdot z-x\circ(y\cdot z)
 &=(y\circ x)\cdot z-y\circ(x\cdot z)
\end{align*}
for all $x,y,z\in\mathcal A$.
\end{definition}

\begin{corollary}
\label{cor:novikov-poisson-classification}
Every finite-dimensional Novikov--Poisson algebra whose Novikov part
is simple is isomorphic to $\OO(1;n)$ with products
\begin{equation}
 f\circ g=f\bigl(\delta(g)+b_{\alpha,\beta}g\bigr),
 \qquad f\cdot g=u_\gamma fg,
 \qquad u_\gamma=\sum_{r=0}^{p^n-1}\gamma_rx^{(r)},
 \label{eq:novikov-poisson-normal-form}
\end{equation}
where $(n,\alpha,\beta)$ are as in
Theorem~\ref{thm:final-classification},
$\gamma\in\kk^{p^n}$ is arbitrary, and juxtaposition denotes the
divided-power product.
Two such algebras are isomorphic if and only if they have the same $n$
and there exists $c\in\kk^\times$ such that
\begin{equation}
 \alpha'_j=c^{p^j-1}\alpha_j,\qquad
 \beta'=c^{p^n}\beta,\qquad
 \gamma'_r=c^{r+1}\gamma_r
 \label{eq:novikov-poisson-isomorphism}
\end{equation}
for $1\le j\le n$ and $0\le r<p^n$.
\end{corollary}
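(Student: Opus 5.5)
By Theorem~\ref{thm:final-classification}, it suffices to determine all commutative associative products $\cdot$ on $\mathcal N=\mathcal N(\alpha,\beta)$ compatible with $\circ$ in the Novikov--Poisson sense. Then we classify the pairs up to the isomorphisms $\phi_c$.

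\textbf{Step 1: the associative product.} The first axiom $(x\cdot y)\circ z=x\cdot(y\circ z)$ says that $\Rop_z(x\cdot y)=x\cdot\Rop_z(y)$ for all $x,y,z$. Equivalently, each left multiplication $\ell_x(y)=x\cdot y$ commutes with every right multiplication $\Rop_z$. By Lemma~\ref{lem:normalized-simple}, the operators $\Rop_z$ span $\mathcal R(\mathcal N)=\{m_c:c\in\OO(1;n)\}$, so $\ell_x$ commutes with every $m_c$. Taking $\ell_x(y)=\ell_x(m_y1)=m_y\ell_x(1)$ gives $x\cdot y=y\,(x\cdot1)$. Commutativity then gives $x\cdot y=y\,(x\cdot 1)=x\,(y\cdot 1)$. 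Put $u=1\cdot 1$. Setting $y=1$ in $x\cdot1=x\,(1\cdot1)$ yields $x\cdot 1=ux$, hence $x\cdot y=uxy$ with $u=u_\gamma$ arbitrary. This product is automatically commutative and associative.

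\textbf{Step 2: compatibility.} Next we check the second axiom for $x\cdot y=uxy$. The first axiom holds directly, since
\[
(uxy)(\delta z+bz)=ux\bigl(y(\delta z+bz)\bigr).
\]
For the second, expand with $D=\delta+m_b$:
\[
(x\circ y)\cdot z-x\circ(y\cdot z)=u\,xD(y)z-xD(uyz).
\]
Since $D(uyz)=\delta(u)yz+u\delta(y)z+uy\delta(z)+buyz$, this equals $-xyz\,\delta(u)-uxy\,\delta(z)$. This expression is symmetric in $x,y$, so the second axiom holds for every $u$. Hence every $\gamma$ occurs, which gives the normal form \eqref{eq:novikov-poisson-normal-form}.

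\textbf{Step 3: isomorphisms.} A Novikov--Poisson isomorphism is in particular a Novikov isomorphism. By Theorem~\ref{thm:final-classification}, it is therefore $\phi_c=c^{-1}\rho_c$ for a unique $c$ with \eqref{eq:weighted-parameter-equivalence}; it remains to impose $\phi_c(u\,fg)=u'\phi_c(f)\phi_c(g)$. Since $\rho_c$ is an algebra automorphism, the left side is $c^{-1}\rho_c(u)\rho_c(f)\rho_c(g)$ and the right side is $c^{-2}u'\rho_c(f)\rho_c(g)$. Taking $f=g=1$ shows the condition is equivalent to $u'=c\rho_c(u)$, that is, $\gamma'_r=c^{r+1}\gamma_r$. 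Conversely, any $c$ satisfying \eqref{eq:novikov-poisson-isomorphism} gives an isomorphism.

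The case $n=0$ is included: $\mathcal N=\kk$, $c=1$ is forced, $u=\gamma_0$, and the conditions reduce to $\gamma_0'=\gamma_0$.

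The only nontrivial point is Step 1, namely identifying the centralizer of $\mathcal R(\mathcal N)$. This is exactly where Lemma~\ref{lem:normalized-simple} is used.
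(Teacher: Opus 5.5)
Your proposal is correct and follows the paper's proof essentially step for step. It uses the same centralizer argument via Lemma~\ref{lem:normalized-simple} to get $f\cdot g=ufg$, and the same symmetry check of the second axiom (your $-xyz\,\delta(u)-uxy\,\delta(z)$ is the paper's $-xy\,\delta(uz)$). It also imposes the same condition $u'=c\rho_c(u)$ for $\phi_c$ to preserve $\cdot$.

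One wording fix: the operators $\Rop_z=m_{D(z)}$ generate the algebra of multiplication operators rather than span it, since for $\beta=0$ the map $D$ has a kernel. Commuting with generators is all you use, so the argument stands.
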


\begin{proof}
By Theorem~\ref{thm:final-classification}, we may identify the Novikov
part with $\mathcal N(\alpha,\beta)$.  Put $M_f(g)=f\cdot g$.
The first compatibility identity gives $[M_f,R_g]=0$.
By Lemma~\ref{lem:normalized-simple}, the right multiplications generate
all multiplication operators of $\OO(1;n)$.  Hence each $M_f$
commutes with these operators, and commutativity gives
\[
 f\cdot g=gM_f(1)=gM_1(f)=fgM_1(1).
\]
Thus $f\cdot g=ufg$ for a unique $u\in\OO(1;n)$.
Conversely, every such product is commutative and associative and
satisfies the first compatibility identity.  The second follows from
\[
 (f\circ g)\cdot h-f\circ(g\cdot h)=-fg\,\delta(uh).
\]
Finally, by Theorem~\ref{thm:final-classification}, all isomorphisms
between the Novikov parts are $\phi_c=c^{-1}\rho_c$, where
$\rho_c(x^{(r)})=c^rx^{(r)}$.
Preservation of $\cdot$ is equivalent to
$u_{\gamma'}=c\rho_c(u_\gamma)$, which gives
\eqref{eq:novikov-poisson-isomorphism}.
\end{proof}

\Needspace{12\baselineskip}
\section{Classification of irreducible bimodules}
\label{sec:bimodules}

For the algebras in Theorem~A, we recover an action of the underlying
associative algebra on every nontrivial irreducible bimodule and prove
that each such bimodule is finite-dimensional.
We then classify these bimodules and determine their isomorphisms.

\begin{definition}
Let $\mathcal{N}$ be a Novikov algebra.
\begin{enumerate}[label=\textup{(\roman*)},leftmargin=2em]
\item
A \emph{bimodule} over $\mathcal{N}$ is a vector space $V$ with linear
maps $\ell,r:\mathcal{N}\to\End_\kk(V)$ such that the split null extension
$\mathcal{N}\oplus V$, with product
\[
 (x+u)\circ(y+v)=x\circ y+\ell(x)v+r(y)u,
\]
is a Novikov algebra.  Equivalently, for all $x,y\in\mathcal{N}$,
\begin{align}
 \ell(x\circ y)&=r(y)\ell(x),
 \label{eq:bimodule-RC}\\
 [r(x),r(y)]&=0,
 \label{eq:bimodule-right-commuting}\\
 [\ell(x),\ell(y)]&=\ell(x\circ y-y\circ x),
 \label{eq:bimodule-left-Lie}\\
 [\ell(x),r(y)]&=r(x\circ y)-r(y)r(x).
 \label{eq:bimodule-mixed}
\end{align}
\item
A nonzero bimodule is \emph{irreducible} if it has no nonzero proper
subspace stable under both actions.
\item
A bimodule is \emph{nontrivial} if its two actions are not both zero.
\item
The \emph{regular bimodule} is $\mathcal{N}$ with
$\ell(x)=L_x$ and $r(x)=R_x$.
\end{enumerate}
\end{definition}

The only trivial irreducible bimodule, up to isomorphism, is the
one-dimensional bimodule with both actions zero.
We do not assume that bimodules are finite-dimensional.

Fix $n\ge0$ and write
\[
 \mathcal{A}=\OO(1;n),\qquad D=\delta+m_b,\qquad
 \mathcal{N}(b)=\mathcal{A}(\delta,b)\quad(b\in \mathcal{A}).
\]
When $n=0$, take $b=1$.
For the normal forms in Theorem~A, we have
$\mathcal{N}(\alpha,\beta)=\mathcal{N}(b_{\alpha,\beta})$.
By Lemma~\ref{lem:normalized-simple}, $\mathcal{N}(b)$ is simple and
its right multiplications generate all multiplication operators
on $\mathcal{A}$.

\begin{lemma}
\label{lem:bimodule-reconstruction}
Let $V$ be a nontrivial irreducible $\mathcal{N}(b)$-bimodule and put
$T=\ell(1)$.  There is a unique injective unital algebra homomorphism
$\pi:\mathcal{A}\to\End_\kk(V)$ such that
\begin{align}
 \ell(x)&=\pi(x)T,& r(x)&=\pi(D(x)),
 \label{eq:module-reconstructed-actions}\\
 [T,\pi(a)]&=\pi(\delta(a))&& (a\in\mathcal{A}).
 \label{eq:module-derivation-relation}
\end{align}
A subspace of $V$ is a subbimodule if and only if it is stable under
$\pi(\mathcal{A})$ and $T$.
\end{lemma}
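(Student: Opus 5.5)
The plan is to build $\pi$ from the right action $r$. The guiding facts are that the right multiplications $R_y=m_{D(y)}$ generate $\mathcal R(\mathcal N(b))\cong\mathcal A$ (Lemma~\ref{lem:normalized-simple}), and that $r$ satisfies the same commutation relations as $R$.

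\textbf{Presentation of $\mathcal A$.} Let $S^+$ be the augmentation ideal of the symmetric algebra $\mathrm{Sym}(\mathcal N(b))$. Consider the algebra homomorphism $\Phi:S^+\to\End_\kk(\mathcal A)$ with $y\mapsto R_y$. By Lemma~\ref{lem:normalized-simple}, $\Phi(S^+)=\{m_c:c\in\mathcal A\}\cong\mathcal A$. Put $K=\ker\Phi$ and fix $e\in S^+$ with $\Phi(e)=m_1$.

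By \eqref{eq:bimodule-right-commuting}, $y\mapsto r(y)$ extends to a homomorphism $\Psi:S^+\to\End_\kk(V)$ with commutative image. The goal is to show $\Psi(K)=0$; then $\pi(c):=\Psi(s)$ for $\Phi(s)=m_c$ is a well-defined homomorphism with $r(y)=\pi(D(y))$.

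For this, use \eqref{eq:bimodule-mixed}. For each $x$, the operator $\operatorname{ad}\ell(x)$ acts on $\Psi(S^+)$ as a derivation sending $r(y)\mapsto r(x\circ y)-r(y)r(x)$. The same formula holds for $\operatorname{ad}L_x$ on $\Phi(S^+)$, because the regular bimodule satisfies \eqref{eq:bimodule-mixed}. So both are induced by one derivation $d_x$ of $S^+$, namely $y\mapsto x\circ y-yx$, and $\Phi\circ d_x=[L_x,\Phi(\cdot)]$ shows $d_x(K)\subseteq K$. Hence
\[
U=\{v\in V:\Psi(K)v=0\}
\]
is stable under all $r(y)$ and $\ell(x)$, so it is a subbimodule.

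\textbf{Main obstacle: showing $U\ne0$, hence $U=V$.} I would first try to avoid needing $U\ne0$ directly, and instead show that the subbimodule $\Psi(K)V$ must be $0$. Note that $K$ contains $ye-y$ and $e^2-e$. Put $P=\Psi(e)$. The idea is to prove that $P$ is an idempotent commuting with all $\ell(x)$, using $d_x(e)\in K$ because $[L_x,m_1]=0$. Then one would argue that $\Psi(K)V$ cannot be all of $V$, since on $PV$ the relations $r(y)P=r(y)$ should force $K$ to act nilpotently. This interplay with finite codimension of $K$ is where I expect the real work. If it fails, the fallback is to derive explicit polynomial identities among the $r(y)$, following \cite{Xu1996}.

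Once $\Psi(K)=0$, nontriviality gives $r\ne0$. Indeed, if $r=0$, then \eqref{eq:bimodule-RC} gives $\ell(\mathcal N\circ\mathcal N)=0$, and $\mathcal N\circ\mathcal N=\mathcal N$ by simplicity, so $\ell=0$ too. Then $P=\pi(1)$ is a nonzero idempotent commuting with both actions. Thus $PV$ is a nonzero subbimodule, so $PV=V$ and $P=\Id$, and $\pi$ is unital.

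\textbf{Remaining assertions.}

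1. The derivation relation \eqref{eq:module-derivation-relation} follows from \eqref{eq:bimodule-mixed} with $x=1$. There $\operatorname{ad}T$ corresponds to $\operatorname{ad}L_1=\operatorname{ad}(m_1D)$, which sends $m_c\mapsto m_{\delta(c)}$; transporting through $\pi$ gives the relation.

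2. For $\ell(x)=\pi(x)T$, apply \eqref{eq:bimodule-RC} in the form $\ell(xD(y))=\pi(D(y))\ell(x)$. Starting from $\ell(1)=T$, this propagates the formula to all products of elements of $D(\mathcal A)$. These products span $\mathcal A$ because $D(\mathcal A)$ generates $\mathcal A$ and $1$ is among the products (Lemma~\ref{lem:normalized-simple}).

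3. For injectivity, $\ker\pi$ is a $\delta$-stable ideal by \eqref{eq:module-derivation-relation}, so it is $0$ or $\mathcal A$ by $\delta$-simplicity; it is not $\mathcal A$ since $\pi(1)=\Id$.

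4. For uniqueness, any such $\pi$ agrees with the one constructed on $D(\mathcal A)$, which generates $\mathcal A$.

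5. The subbimodule criterion is immediate from \eqref{eq:module-reconstructed-actions}, given that $T=\ell(1)$ and that $\pi(\mathcal A)$ is generated by the $r(y)$.
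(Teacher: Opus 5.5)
Your proposal has a genuine gap, and it is exactly the step you flag. You never prove $\Psi(K)=0$, and everything downstream depends on it: unitality, \eqref{eq:module-derivation-relation}, $\ell(x)=\pi(x)T$, and injectivity. Your sketch for this step is circular. Idempotence of $P=\Psi(e)$ means $\Psi(e^2-e)=0$, and $r(y)P=r(y)$ means $\Psi(ye-y)=0$. Since $e^2-e$ and $ye-y$ lie in $K$, both are already instances of the claim you want. You also cannot appeal to finite codimension of $K$ or to finite-dimensionality of $V$: bimodules are not assumed finite-dimensional, and the paper deduces finite-dimensionality only afterwards, from this lemma. The derivations $d_x$ show only that $U=\{v:\Psi(K)v=0\}$ and $\Psi(K)V$ are subbimodules. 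They give no way to decide which alternative occurs.

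The missing idea is to use \eqref{eq:bimodule-RC} in the form $r(y)\ell(x)=\ell(R_yx)$, which is how the paper argues.
\begin{itemize}
\item Iterating gives $\Psi(s)\ell(x)=\ell(\Phi(s)x)$, and this holds even for polynomials with constant term. Hence $\Psi(K)$ annihilates $W=\sum_x\ell(x)V$, so $W\subseteq U$.
\item By \eqref{eq:bimodule-RC} and \eqref{eq:bimodule-left-Lie}, $W$ is a subbimodule. So it suffices to show $\ell\ne0$.
\item Suppose $n>0$ and $\ell=0$. Then \eqref{eq:bimodule-mixed} reads $r(x\circ y)=r(y)r(x)$, so $\ker r$ is a two-sided ideal. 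Nontriviality and simplicity make $r$ injective into a commutative algebra. This contradicts $1\circ x-x\circ1=1$ for $x=x^{(1)}$.
\item For $n=0$ the algebra $\kk$ is commutative, so this argument fails. There, however, $\ell=0$ gives $r(1)^2=r(1)$ directly from \eqref{eq:bimodule-mixed}, and that is all of $\Psi(K)=0$ in this case.
\end{itemize}
Hence $W=V$ and $\Psi(K)=0$. Because the transfer also applies to polynomial relations with constant term, you get $\pi(1)=\Id$ at once, so your separate idempotent argument for unitality is unnecessary. With this step supplied, your remaining steps go through.
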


\begin{proof}
For $n=0$, put $L=\ell(1)$ and $R=r(1)$.
The bimodule identities give $RL=L$ and $[L,R]=R-R^2$.
The subspace $\ker(R-\Id)$ is stable under $L$ and $R$ and is nonzero:
it contains $\operatorname{im}L$ if $L\ne0$, and
$\operatorname{im}R\ne0$ if $L=0$, since then $R^2=R$.
Irreducibility gives $R=\Id$, so take $\pi(a)=a\Id$.

Suppose now that $n>0$.
To recover an associative action on $V$, we first show that the images
of the left actions span $V$.  Let
$U=\sum_{x\in\mathcal{A}}\ell(x)V$.  The bimodule identities give
\[
 r(y)\ell(x)=\ell(x\circ y),\qquad
 \ell(y)\ell(x)=\ell(x)\ell(y)+\ell(y\circ x-x\circ y),
\]
so $U$ is a subbimodule.
If $U=0$, then $\ell=0$, and
\eqref{eq:bimodule-right-commuting} and~\eqref{eq:bimodule-mixed}
make $r$ a nonzero homomorphism into a commutative associative algebra.
Simplicity makes $r$ injective, contradicting
$1\circ x^{(1)}-x^{(1)}\circ1=1$.
Thus $U=V$.

We can now transfer polynomial relations from the regular right
multiplications to the right actions on $V$.
As in~\cite{Osborn1995modules} and~\cite[proof of Lemma~4.1]{Xu2001},
repeated use of $r(u)\ell(x)=\ell(R_ux)$ gives, for
$u_1,\ldots,u_m\in\mathcal{A}$ and $P\in\kk[t_1,\ldots,t_m]$,
\begin{equation}
 P(r(u_1),\ldots,r(u_m))\ell(x)
 =\ell\bigl(P(R_{u_1},\ldots,R_{u_m})x\bigr).
 \label{eq:right-relation-transfer}
\end{equation}
If $P(R_{u_1},\ldots,R_{u_m})=0$, then
$P(r(u_1),\ldots,r(u_m))$ annihilates $U=V$.
Since $R_u=m_{D(u)}$, we have
\[
 P(R_{u_1},\ldots,R_{u_m})
 =m_{P(D(u_1),\ldots,D(u_m))}.
\]
It follows that every polynomial relation among the $D(u)$ holds
among the $r(u)$.
Since $D(\mathcal{A})$ generates $\mathcal{A}$ by
Lemma~\ref{lem:normalized-simple}, the assignment $D(y)\mapsto r(y)$
extends uniquely to a unital algebra homomorphism
$\pi:\mathcal{A}\to\End_\kk(V)$.
Explicitly,
\[
 \pi\bigl(P(D(u_1),\ldots,D(u_m))\bigr)
 =P(r(u_1),\ldots,r(u_m)),\qquad \pi(1)=\Id_V.
\]
Equation~\eqref{eq:right-relation-transfer} also gives
$\pi(a)\ell(x)=\ell(ax)$.  Taking $x=1$ yields $\ell(a)=\pi(a)T$.

The remaining commutator relation follows from
\eqref{eq:bimodule-left-Lie}, which gives
$[T,\ell(x)]=\ell(\delta(x))$.
Using $\pi(a)\ell(x)=\ell(ax)$, we obtain
\[
 [T,\pi(a)]\ell(x)
 =\ell(\delta(ax))-\ell(a\delta(x))
 =\pi(\delta(a))\ell(x).
\]
Since $U=V$, this proves~\eqref{eq:module-derivation-relation}.
This relation also proves faithfulness: $\ker\pi$ is a proper
$\delta$-stable ideal and is zero by
$\delta$-simplicity.
The final assertion follows because $\pi(\mathcal{A})$ is generated
by the right actions and $\ell(x)=\pi(x)T$.
\end{proof}

\begin{lemma}
\label{lem:automatic-finite-dimensionality}
Every nontrivial irreducible $\mathcal{N}(b)$-bimodule is finite-dimensional.
\end{lemma}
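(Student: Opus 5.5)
We work with the description from Lemma~\ref{lem:bimodule-reconstruction}. Let $V$ be a nontrivial irreducible $\mathcal{N}(b)$-bimodule, let $T=\ell(1)$, and let $\pi:\mathcal{A}\to\End_\kk(V)$ be the injective homomorphism with $[T,\pi(a)]=\pi(\delta(a))$. By that lemma, the subbimodules of $V$ are exactly the subspaces stable under $\pi(\mathcal{A})$ and $T$. So $V$ is an irreducible module over the associative algebra $W$ generated by $\pi(\mathcal{A})$ and $T$. For $n=0$ we have $\pi(\mathcal A)=\kk\Id$, and $V$ is an irreducible module over the polynomial algebra $\kk[T]$. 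It is therefore one-dimensional, because $\kk$ is algebraically closed: a simple module over $\kk[T]$ is $\kk[T]/(P)$ with $P$ irreducible, hence linear.

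Assume now $n>0$. The relation $[T,\pi(a)]=\pi(\delta(a))$ shows that
\[
 W=\sum_{k\ge0}\pi(\mathcal{A})T^k ,
\]
since $T\pi(a)=\pi(a)T+\pi(\delta(a))$ lets us move every power of $T$ to the right. Thus $W$ is a quotient of a skew polynomial (Ore) extension $\mathcal{A}[T;\delta]$. The key step is to find a central element of $W$ of the form $T^{N}+(\text{lower terms})$. Since $\delta$ is nilpotent with $\delta^{p^n}=0$, and $\delta^{p^n}$ is a derivation (being a $p$-th power iterate in characteristic $p$), we have
\[
 [T^{p^n},\pi(a)]=\pi(\delta^{p^n}(a))=0 .
\]
Here we use $\operatorname{ad}(T)^{p^n}=\operatorname{ad}(T^{p^n})$ in characteristic $p$. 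Hence $Z=T^{p^n}$ commutes with $\pi(\mathcal{A})$ and with $T$, so $Z$ is central in $W$.

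It follows that $W$ is a finitely generated module over the commutative subalgebra $\kk[Z]$, with generators $\pi(x^{(i)})T^k$ for $0\le i<p^n$ and $0\le k<p^n$. Pick any nonzero $v\in V$; by irreducibility $V=Wv$. The hard step is to show that $Z$ acts by a scalar even though $V$ may a priori be infinite-dimensional. For this I would invoke the standard argument for affine algebras that are finite over their centre, which avoids any appeal to Schur's lemma requiring countable dimension. Since $W$ is a finite module over $\kk[Z]$, the irreducible module $V=Wv$ is a finitely generated $\kk[Z]$-module. If $\mathfrak p$ is a maximal ideal of $\kk[Z]$, then $\mathfrak pV$ is a $W$-submodule because $Z$ is central. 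By Nakayama's lemma applied to the finitely generated $\kk[Z]$-module $V$ (localised at $\mathfrak p$), we get $\mathfrak pV\ne V$ for some maximal $\mathfrak p$; here we use that $V\ne0$ is finitely generated, so its support is nonempty. Irreducibility then forces $\mathfrak pV=0$. As $\kk$ is algebraically closed, $\mathfrak p=(Z-\mu)$ for some $\mu\in\kk$, so $T^{p^n}=\mu\Id$ on $V$. We must also rule out the degenerate case where $V$ is a torsion-free $\kk[Z]$-module, for instance $\kk[Z]$ itself. This is excluded because a nonzero finitely generated module always has a maximal ideal $\mathfrak p$ with $V/\mathfrak pV\ne0$.

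Consequently $V=Wv$ is spanned by the vectors $\pi(x^{(i)})T^kv$ with $0\le i,k<p^n$. Therefore $\dim V\le p^{2n}<\infty$, which proves Lemma~\ref{lem:automatic-finite-dimensionality}. The main point to verify carefully is the centrality of $T^{p^n}$: this requires $\delta^{p^n}=0$ on $\mathcal{A}$, which holds by \eqref{eq:lowering-iterates}, together with the characteristic-$p$ identity $\operatorname{ad}(T)^{p^n}=\operatorname{ad}(T^{p^n})$, which holds because $\operatorname{ad}(T)=L_T-R_T$ with $L_T$ and $R_T$ commuting.
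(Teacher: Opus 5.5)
Your proof is correct, but it uses the key fact differently from the paper. Both arguments rest on the same input: $T^{p^n}$ commutes with $\pi(\mathcal{A})$, because $\binom{p^n}{j}=0$ for $0<j<p^n$ and $\delta^{p^n}=0$.

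The paper first uses the nilpotence of $\mathfrak m$ to choose $0\ne v$ with $\mathfrak m v=0$. Moving $\pi(a)$ past powers of $T$ then shows that $V=\kk[T]v$ is cyclic over the polynomial ring $\kk[T]$. If $V$ were infinite-dimensional it would be free of rank one over $\kk[T]$. Then $T^{p^n}V$ would be a nonzero proper subspace stable under $T$ and $\pi(\mathcal{A})$, contradicting irreducibility.

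You instead regard all of $W=\sum_k\pi(\mathcal{A})T^k$ as a finite module over the central subalgebra $\kk[T^{p^n}]$. Nakayama then forces $T^{p^n}$ to act by a scalar. This is the standard ``finite over the centre'' argument. It needs no special choice of vector, and it also tells you that $T^{p^n}$ is scalar on $V$. It rightly avoids the countable-dimension (Dixmier) form of Schur's lemma, which is unavailable when $\kk$ is countable, e.g. $\kk=\overline{\mathbb F}_p$.

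The paper's route is more elementary, using nothing beyond $\kk[T]$. It also gives the sharper structural statement $V=\kk[T]v$, whereas your bound $\dim V\le p^{2n}$ is cruder, though enough for this lemma.

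Two cosmetic points. The remark that $\delta^{p^n}$ is a derivation is unnecessary, since it is zero. The torsion-free ``degenerate case'' is not a separate case, because your Nakayama step already covers every nonzero finitely generated $\kk[Z]$-module.
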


\begin{proof}
Let $V$ be such a bimodule, with $\pi$ and $T=\ell(1)$ as in
Lemma~\ref{lem:bimodule-reconstruction}.  Write $av=\pi(a)v$.
Let $\mathfrak m$ be the maximal ideal of $\mathcal{A}=\OO(1;n)$.
We first use its nilpotence to choose a vector that will generate $V$
under $T$.  Take the largest $s\ge0$ such that $\mathfrak m^sV\ne0$
and choose $0\ne v\in\mathfrak m^sV$.
Then $\mathfrak m v=0$, so $av=a(0)v$ for every $a\in \mathcal{A}$.
Induction using $aT=Ta-\delta(a)$ gives, for $r\ge0$,
\[
 aT^rv
 =\sum_{j=0}^r(-1)^j\binom rj T^{r-j}\delta^j(a)v
 =\sum_{j=0}^r(-1)^j\binom rj\delta^j(a)(0)T^{r-j}v.
\]
The subspace $\kk[T]v$ is nonzero and stable under $\mathcal{A}$ and $T$,
so it is a subbimodule and $V=\kk[T]v$ by irreducibility.

Put $q=p^n$.  If $V$ were infinite-dimensional, the vectors $T^rv$ would be linearly
independent: any polynomial relation would express all sufficiently high
powers in terms of finitely many lower ones.
In that case
\[
 T^qV=\operatorname{span}_{\kk}\{T^rv:r\ge q\}
\]
would be nonzero and would not contain $v$.
To contradict irreducibility, it remains to check stability under
$\mathcal A$.
Repeated use of~\eqref{eq:module-derivation-relation} gives
\[
 T^q\pi(a)=\sum_{j=0}^q\binom qj\pi(\delta^j(a))T^{q-j}
          =\pi(a)T^q,
\]
since $\binom qj=0$ in $\kk$ for $0<j<q$ and $\delta^q=0$.
Hence $T^qV$ is stable under $\mathcal{A}$ and $T$.  It is a nonzero proper
subbimodule, contradicting irreducibility.
\end{proof}

For $\lambda\in\kk$, let $\mathcal{M}(b,\lambda)$ be the vector space
$\mathcal{A}$ with linear operators
\begin{align}
 \ell_\lambda(x)v
 &=x\delta(v)+\lambda xv,
 \label{eq:canonical-module-left}\\
 r_b(x)v
 &=v\delta(x)+bxv.
 \label{eq:canonical-module-right}
\end{align}
For $b=b_{\alpha,\beta}$, write
$\mathcal{M}(\alpha,\beta,\lambda)=\mathcal{M}(b_{\alpha,\beta},\lambda)$.
In the basis $v_i=x^{(i+1)}$, $-1\le i\le p^n-2$, these actions
give~\eqref{eq:explicit-module-left}--\eqref{eq:explicit-module-right}.

\Needspace{13\baselineskip}
\begin{theorem}[Thoerem B]
\label{thm:irreducible-bimodule-classification}
Fix $\mathcal{N}(\alpha,\beta)$ as in Theorem~A.
\begin{enumerate}[label=\textup{(\roman*)},leftmargin=2em]
\item
For each $\lambda\in\kk$, the bimodule
$\mathcal{M}(\alpha,\beta,\lambda)$ is nontrivial and irreducible
of dimension $p^n$.  Every nontrivial irreducible
$\mathcal{N}(\alpha,\beta)$-bimodule is isomorphic to one of these.

\item
The $\mathcal{N}(\alpha,\beta)$-bimodules
$\mathcal{M}(\alpha,\beta,\lambda)$ and
$\mathcal{M}(\alpha,\beta,\lambda')$ are isomorphic if and only if
$\lambda=\lambda'$.
\end{enumerate}
\end{theorem}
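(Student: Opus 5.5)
We work with $b=b_{\alpha,\beta}$, the algebra $\mathcal{N}(b)$, and $D=\delta+m_b$. The plan has three parts: check that $\mathcal{M}(b,\lambda)$ is an irreducible bimodule, identify an arbitrary nontrivial irreducible bimodule with one of them, and separate the $\lambda$.

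\emph{The bimodule $\mathcal{M}(b,\lambda)$.} Write $\ell_\lambda(x)=m_x(\delta+\lambda\Id)$ and $r_b(x)=m_{D(x)}$. We verify the identities \eqref{eq:bimodule-RC}--\eqref{eq:bimodule-mixed} directly.
\begin{itemize}
\item For \eqref{eq:bimodule-RC}, $\ell_\lambda(x\circ y)=m_{xD(y)}(\delta+\lambda)=r_b(y)\ell_\lambda(x)$.
\item Identity \eqref{eq:bimodule-right-commuting} is clear.
\item Identity \eqref{eq:bimodule-left-Lie} reduces to $[m_x\delta,m_y\delta]=m_{x\delta(y)-y\delta(x)}\delta$; the $\lambda$-terms contribute exactly $\lambda m_{x\delta(y)-y\delta(x)}$, matching $\ell_\lambda(x\circ y-y\circ x)$ because the $b$-terms cancel.
\item For \eqref{eq:bimodule-mixed}, $[m_x(\delta+\lambda),m_{D(y)}]=m_{x\delta D(y)}$. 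On the other side, $r_b(x\circ y)-r_b(y)r_b(x)=m_{D(xD(y))-D(x)D(y)}=m_{x\delta(D(y))}$.
\end{itemize}
Nontriviality holds because $r_b(x)=\Id$. Irreducibility follows from the last assertion of Lemma~\ref{lem:bimodule-reconstruction}, with $\pi(a)=m_a$ and $T=\delta+\lambda$: a subbimodule is a $\delta$-stable ideal, so it is $0$ or $\mathcal{A}$. Alternatively, the same criterion can be checked directly here. The dimension is $p^n$.

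\emph{Classification.} Let $V$ be nontrivial and irreducible. By Lemmas~\ref{lem:bimodule-reconstruction} and~\ref{lem:automatic-finite-dimensionality}, $V$ is finite-dimensional with $\pi$ and $T$ as there. Since $T^q$ commutes with $\pi(\mathcal A)$ and $T$ (shown in the proof of Lemma~\ref{lem:automatic-finite-dimensionality}), every eigenspace of $T^q$ is a subbimodule. Hence $T^q=\mu\Id$; let $\lambda=\mu^{1/q}$, so $(T-\lambda)^q=0$.

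The key step is to find an eigenvector $v$ of $T$ annihilated by $\mathfrak m$. Consider $W=\{w:\mathfrak m w=0\}$. It is nonzero, since $\mathfrak m$ acts nilpotently. I would show that $W$ is $T$-stable up to the correction $\delta$: for $a\in\mathfrak m$, $aTw=Ta w-\delta(a)w=-\delta(a)(0)w$. So $T$ need not preserve $W$. Instead, I would look at the socle of $V$ as an $\mathcal A\rtimes\kk\delta$-module and pass to $T'=T-\lambda$ acting on $V$.

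Simpler route: take $v\in W$ as in the proof of Lemma~\ref{lem:automatic-finite-dimensionality}. Then $V=\kk[T]v$, and the formula there shows $\mathcal A\otimes\kk v\to V$, $a\mapsto a\cdot v$, is nonzero. Define $\Phi:\mathcal{M}(b,\lambda)\to V$ by $\Phi(a)=\pi(a)w$ for a suitable $w$ satisfying $Tw=\lambda w$. Then $\Phi$ intertwines: $\Phi(\ell_\lambda(x)a)=\pi(x)(\pi(\delta a)+\lambda\pi(a))w=\pi(x)T\pi(a)w$ by \eqref{eq:module-derivation-relation}, and $r$ matches via $\pi(D(x))$. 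The image is a nonzero subbimodule, so $\Phi$ is surjective. Since $\mathcal{M}(b,\lambda)$ is irreducible, $\Phi$ is injective. So everything rests on the existence of an eigenvector $w$ of $T$, which is automatic in finite dimensions, with eigenvalue necessarily $\lambda$. This gives (i), and the main work is the bimodule verification rather than this step.

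\emph{Separating $\lambda$.} The operator $\ell(1)=\delta+\lambda\Id$ on $\mathcal{M}(\alpha,\beta,\lambda)$ has unique eigenvalue $\lambda$, and an isomorphism conjugates $\ell(1)$. Hence $\lambda$ is an invariant, which proves (ii).
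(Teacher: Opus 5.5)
Your proof is correct and takes essentially the paper's route. It uses the same direct check of the four bimodule identities, irreducibility from the right actions generating all $m_a$ together with $\delta$-simplicity, the isomorphism $a\mapsto\pi(a)w$ built from an eigenvector $w$ of $T=\ell(1)$, and recovery of $\lambda$ as the unique eigenvalue of $\ell(1)=\delta+\lambda\Id$.

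Three minor fixes are needed:
\begin{enumerate}
\item Justify nontriviality by the right actions generating all multiplication operators (Lemma~\ref{lem:normalized-simple}), not by $r_b(x)=\Id$. That equation has no solution $x$ when $\beta=0\ne\alpha_n$.
\item Check irreducibility directly rather than citing Lemma~\ref{lem:bimodule-reconstruction}, whose hypothesis already assumes irreducibility.
\item Delete the abandoned search for an eigenvector of $T$ killed by $\mathfrak m$. No such vector exists when $n>0$.
\end{enumerate}
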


\begin{proof}
We prove the assertions for $\mathcal{N}(b)$ with arbitrary
$b\in\mathcal{A}$, taking $b=1$ if $n=0$.
Fix $\lambda\in\kk$ and write $E=\delta+\lambda\Id$.
On $\mathcal{M}(b,\lambda)=\mathcal{A}$, the operators are
$\ell_\lambda(x)=m_xE$ and $r_b(x)=m_{D(x)}$.
Since $x\circ y=xD(y)$, one has
\[
 \ell_\lambda(x\circ y)=m_{xD(y)}E=r_b(y)\ell_\lambda(x),
 \qquad [r_b(x),r_b(y)]=0.
\]
Identities~\eqref{eq:bimodule-left-Lie} and~\eqref{eq:bimodule-mixed}
follow from
\begin{align*}
 [m_xE,m_yE]
 &=m_{x\delta(y)-y\delta(x)}E
  =\ell_\lambda(x\circ y-y\circ x),\\
 [m_xE,m_{D(y)}]
 &=m_{x\delta(D(y))}\\
 &=m_{D(xD(y))-D(y)D(x)}
  =r_b(x\circ y)-r_b(y)r_b(x).
\end{align*}
Thus $\mathcal{M}(b,\lambda)$ is a bimodule.
Its right actions generate all multiplication operators on $\mathcal{A}$,
so it is nontrivial and every subbimodule is an ideal of $\mathcal{A}$.
Such an ideal is stable under $\ell_\lambda(1)=\delta+\lambda\Id$ and hence
under $\delta$.  The $\delta$-simplicity of $\mathcal{A}$ proves irreducibility.

Conversely, let $V$ be a nontrivial irreducible $\mathcal{N}(b)$-bimodule, with
$\pi$ and $T=\ell(1)$ as in Lemma~\ref{lem:bimodule-reconstruction}.
Write $av=\pi(a)v$.
Since $V$ is finite-dimensional by
Lemma~\ref{lem:automatic-finite-dimensionality} and $\kk$ is
algebraically closed, we may choose $0\ne w\in V$ with
$Tw=\lambda w$ for some $\lambda\in\kk$, following the eigenvector
method in~\cite[Proposition~3.1]{Xu1996} and
\cite[proof of Theorem~4.3, Case~1]{Xu2001}.
For $a\in \mathcal{A}$,
\[
 T(aw)=\bigl(\delta(a)+\lambda a\bigr)w.
\]
Thus $\mathcal{A}w$ is a nonzero subspace stable under $\mathcal{A}$ and $T$, so $V=\mathcal{A}w$
by irreducibility.  The map $\mathcal{A}\to V$, $a\mapsto aw$, is also
injective: its kernel is an ideal of $\mathcal{A}$, and if $aw=0$,
the displayed identity gives $\delta(a)w=0$.
The kernel is therefore $\delta$-stable and proper, so it is zero by
$\delta$-simplicity.
The resulting linear isomorphism $F:\mathcal{A}\to V$, $F(a)=aw$, intertwines
both actions: for $x,a\in \mathcal{A}$,
\begin{align*}
 \ell(x)F(a)&=xT(aw)
             =F\bigl(x\delta(a)+\lambda xa\bigr),\\
 r(x)F(a)&=(\delta(x)+bx)aw=F\bigl((\delta(x)+bx)a\bigr).
\end{align*}
Hence $V\cong\mathcal{M}(b,\lambda)$ and $\dim V=p^n$.

On $\mathcal{M}(b,\lambda)$ one has
\begin{equation}
 \ell(1)^{p^n}=(\delta+\lambda\Id)^{p^n}=\lambda^{p^n}\Id.
 \label{eq:bimodule-parameter-power}
\end{equation}
Thus $\lambda$ is the unique eigenvalue of $\ell(1)$.
If $\mathcal{M}(b,\lambda)\cong \mathcal{M}(b,\lambda')$, then
$\lambda^{p^n}=(\lambda')^{p^n}$ and hence $\lambda=\lambda'$.
\end{proof}

For bimodules $V$ over $\mathcal{N}$ and $V'$ over $\mathcal{N}'$,
an isomorphism over an algebra isomorphism
$\phi:\mathcal{N}\to\mathcal{N}'$ is a linear bijection $F:V\to V'$
satisfying $F(x\circ v)=\phi(x)\circ F(v)$ and
$F(v\circ x)=F(v)\circ\phi(x)$.
For $\phi=\Id_{\mathcal{N}}$, this is the usual notion of an isomorphism
of $\mathcal{N}$-bimodules.

\Needspace{9\baselineskip}
\begin{corollary}
\label{cor:module-parameter-change}
Let $n>0$.
\begin{enumerate}[label=\textup{(\roman*)},leftmargin=2em]
\item
Let $(\sigma,h)\in G_n$, let $a_1$ be the coefficient of $x$ in
$\sigma(x)$, and put $b'=(\sigma,h)\ast b$.
Then $\mathcal{M}(b,\lambda)$ and $\mathcal{M}(b',a_1\lambda)$ are
isomorphic over $\phi=h^{-1}\sigma$.
In particular, $G_n^1$ preserves the module parameter.
\item
Let $\phi_c=c^{-1}\rho_c$ be an algebra isomorphism between
$\mathcal{N}(\alpha,\beta)$ and $\mathcal{N}(\alpha',\beta')$
as in Theorem~\ref{thm:final-classification}.
The bimodules $\mathcal{M}(\alpha,\beta,\lambda)$ and
$\mathcal{M}(\alpha',\beta',\lambda')$ are isomorphic over $\phi_c$
if and only if $\lambda'=c\lambda$.
Every such isomorphism is a nonzero scalar multiple of $\rho_c$, where
\[
 \rho_c(v_i)=c^{i+1}v_i\qquad(-1\le i\le p^n-2).
\]
\end{enumerate}
\end{corollary}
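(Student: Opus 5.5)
For part (i), the plan is to write the candidate isomorphism explicitly and verify the two intertwining identities directly. The natural choice is $F=\sigma:\mathcal{M}(b,\lambda)\to\mathcal{M}(b',a_1\lambda)$, possibly multiplied by a unit. Put $\phi=h^{-1}\sigma$ with $h=\delta(\sigma(x))$; by Proposition~\ref{prop:residual-admissible-group} we have $\delta\sigma=h\sigma\delta$.

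The left action on $\mathcal{M}(b',\lambda')$ is $\ell(y)=m_y(\delta+\lambda'\Id)$. Then
\[
 \ell(\phi(x))\sigma(v)=h^{-1}\sigma(x)\bigl(h\sigma(\delta v)+\lambda'\sigma(v)\bigr),
\]
whereas $\sigma(\ell_\lambda(x)v)=\sigma(x)\bigl(\sigma(\delta v)+\lambda\sigma(v)\bigr)$. These agree only if $\lambda'h^{-1}=\lambda$, which fails unless $h$ is constant. So plain $\sigma$ is the wrong choice, and I would instead try $F(v)=g\,\sigma(v)$ for a unit $g$ to be determined.

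The left condition then becomes
\[
 h^{-1}\sigma(x)\bigl(\delta(g\sigma v)+\lambda'g\sigma v\bigr)=g\sigma(x)\bigl(\sigma\delta v+\lambda\sigma v\bigr).
\]
Using $\delta(g\sigma v)=\delta(g)\sigma v+gh\sigma\delta v$, this reduces to
\[
 \delta(g)+\lambda' g=\lambda hg .
\]
The right condition is $r_{b'}(\phi x)F(v)=F(r_b(x)v)$, i.e.\ $D'(h^{-1}\sigma x)=\sigma(Dx)$ with $D'=\delta+m_{b'}$. This is exactly the algebra isomorphism property of $\phi$, because the regular right multiplication $R_y=m_{D(y)}$ is intertwined by $\phi$ and multiplication operators commute with $m_g$; so it holds automatically.

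It remains to solve $\delta(g)=(\lambda h-\lambda')g$ with $g$ a unit. The main obstacle is solvability: $\delta$ is surjective only onto $F^0$ modulo the top degree, i.e.\ $\operatorname{im}\delta=\operatorname{span}\{x^{(j)}:j<p^n-1\}$, so the top coefficient of $\lambda h-\lambda'$ must be arranged suitably. I would try $g=\exp$-type solutions via divided powers: write $\lambda h-\lambda'=\delta(\lambda(f-a_1x))+(\lambda a_1-\lambda')$, and take $\lambda'=a_1\lambda$, so the right side is $\delta(w)$ with $w=\lambda(\sigma(x)-a_1x)\in\mathfrak m$. Then I would seek $g$ with $\delta(g)=\delta(w)g$, i.e.\ $g=\sum_k w^{(k)}$ truncated. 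This should be checked using Lemma~\ref{lem:divided-power-identities}. The concern is that $w^{(k)}$ may leave $\OO(1;n)$; if so, I would solve degree by degree using only that the needed top-degree condition holds, possibly after absorbing into the scalar. I expect this to be the delicate step.

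For part (ii), the case $\sigma=\rho_c$, $h=c$ gives $g=1$ and $F=\rho_c$, hence isomorphism when $\lambda'=c\lambda$. Conversely, compose an isomorphism over $\phi_c$ with $\rho_c^{-1}$ to get a bimodule isomorphism $\mathcal{M}(\alpha,\beta,\lambda)\to\mathcal{M}(\alpha,\beta,c^{-1}\lambda')$, whence $\lambda'=c\lambda$ by Theorem~\ref{thm:irreducible-bimodule-classification}(ii). Uniqueness up to scalar follows from Schur's lemma: the endomorphism algebra of a finite-dimensional irreducible bimodule over algebraically closed $\kk$ is $\kk$.
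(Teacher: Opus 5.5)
\textbf{There is a genuine gap in (i).} Your setup is correct, and your ansatz is in fact forced. The right-action condition gives $Fm_{D(x)}=m_{\sigma(D(x))}F$, and $D(\mathcal A)$ generates $\mathcal A$, so $Fm_a=m_{\sigma(a)}F$ for all $a$. Hence every isomorphism over $\phi$ has the form $m_g\sigma$, and the left-action condition is exactly $\delta(g)=(\lambda h-\lambda')g$. But the existence of a unit $g$ solving this with $\lambda'=a_1\lambda$ is the whole content of (i), and you leave it unproved.

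Your fallback does not close it. Solving $\delta(g)=\delta(w)g$ degree by degree from $g(0)=1$ determines $g$ uniquely. It then leaves one scalar obstruction, $[x^{(p^n-1)}](\delta(w)g)=0$, which you never check. ``Absorbing into the scalar'' would mean changing $\lambda'$, which gives up exactly the claim $\lambda'=a_1\lambda$. Truncating a series computed in $\OO(1)$ would also fail in general, because $\delta$ does not commute with truncation at degree $p^n$.

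\textbf{The missing observation is that no truncation occurs.}
\begin{itemize}
\item Since $\sigma$ is admissible, $w=\lambda(\sigma(x)-a_1x)$ involves only $x^{(m)}$ with $2\le m<p^n$ and $m$ not a power of $p$.
\item For such $m$, the proof of Proposition~\ref{prop:residual-admissible-group} shows $(x^{(m)})^{(s)}\in\OO(1;n)$, and $(x^{(m)})^{(s)}=0$ for $s\ge p$.
\item Hence each $w^{(k)}$ lies in $\OO(1;n)$. Since it has only degrees $\ge 2k$, it vanishes once $2k\ge p^n$.
\item So $g=\sum_{k\ge0}w^{(k)}$ is a finite sum in $\OO(1;n)$ with $g(0)=1$. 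Lemma~\ref{lem:divided-power-identities} then gives $\delta(g)=\delta(w)g=(\lambda h-a_1\lambda)g$ exactly, with no error term.
\end{itemize}

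\textbf{Comparison with the paper's route.} With this step supplied, your route gives an explicit isomorphism $m_g\sigma$, which is more than the paper states. The paper instead transports the $\mathcal N(b')$-actions to $\mathcal M(b,\lambda)$ via $\phi^{-1}$. It uses the classification to identify the result with $\mathcal M(b',\mu)$, where $\mu$ is the unique eigenvalue of $T'=m_{\sigma^{-1}(h)}(\delta+\lambda\Id)$. It then reads off $\mu=a_1\lambda$ from $\mu^{p^n}=\det T'=a_1^{p^n}\lambda^{p^n}$. That is shorter but nonconstructive.

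\textbf{Part (ii) is fine.}
\begin{itemize}
\item For $\rho_c$ one has $w=0$ and $g=1$, so this part does not depend on the missing step.
\item Your necessity argument via $\rho_c^{-1}F$ and Theorem~B(ii) is essentially the paper's.
\item Schur's lemma (finite dimension, irreducibility, $\kk$ algebraically closed) validly replaces the paper's direct computation that a bimodule automorphism is $m_f$ with $f\in\ker\delta=\kk1$.
\end{itemize}
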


\begin{proof}
\textup{(i)} On $\mathcal{M}(b,\lambda)$, define the
$\mathcal{N}(b')$-actions by
\[
 \ell'(a)=\ell(\phi^{-1}(a)),\qquad
 r'(a)=r(\phi^{-1}(a)).
\]
Put $q=p^n$ and $g=\sigma^{-1}(h)$.
Since $\phi^{-1}(1)=g$, the left action of $1$ becomes
$T'=m_g(\delta+\lambda\Id)$.
By the classification for arbitrary $b$ established in the proof of
Theorem~\ref{thm:irreducible-bimodule-classification},
$T'$ has a unique eigenvalue $\mu$, and
the determinant therefore determines the new parameter:
\[
 \mu^q=\det T'=g(0)^q\lambda^q=a_1^q\lambda^q.
\]
Here $g(0)=h(0)=a_1$, and multiplication by $g$ is triangular
with diagonal entries $g(0)$.
Hence $\mu=a_1\lambda$.

\textup{(ii)} Taking $\sigma=\rho_c$ and $h=c$ in~\textup{(i)}
gives the necessary condition $\lambda'=c\lambda$.
Writing $b=b_{\alpha,\beta}$ and $b'=b_{\alpha',\beta'}$,
we have $b'=c\rho_c(b)$.
Together with $\delta\rho_c=c\rho_c\delta$, this gives, for $x,v\in\mathcal A$,
\begin{align*}
 \ell_{c\lambda}(\phi_c(x))\rho_c(v)
 &=c^{-1}\rho_c(x)\bigl(\delta\rho_c(v)+c\lambda\rho_c(v)\bigr)\\
 &=\rho_c\bigl(x\delta(v)+\lambda xv\bigr),\\
 r_{b'}(\phi_c(x))\rho_c(v)
 &=\bigl(\rho_c(\delta(x))+\rho_c(bx)\bigr)\rho_c(v)\\
 &=\rho_c\bigl((\delta(x)+bx)v\bigr).
\end{align*}
Thus $\rho_c$ intertwines both actions over $\phi_c$.
Composing any such isomorphism with $\rho_c^{-1}$ on the left gives
a bimodule automorphism $F$ of $\mathcal M(b,\lambda)$.
Since the right actions generate all multiplication operators, $F$
commutes with these operators.  Put $f=F(1)$.  Then
\[
 F(a)=F(m_a1)=aF(1)=af,\qquad
 0=[\delta+\lambda\Id,m_f]=m_{\delta(f)}.
\]
Hence $f\in\ker\delta=\kk1$, so $F$ is a nonzero scalar.
Thus all isomorphisms over $\phi_c$ are nonzero scalar multiples of
$\rho_c$.
\end{proof}

\begin{remark}
\label{rem:PZ-bimodules}
Let $n>0$ and $\mathcal{A}=\OO(1;n)$.
The construction of Pozhidaev and
Zhelyabin~\cite[Proposition~1 and Theorems~4, 6]{PZmodules}
gives all nontrivial finite-dimensional irreducible
$\mathcal{N}(b)$-bimodules, up to isomorphism, on $\mathcal{A}$ with
right action~\eqref{eq:canonical-module-right} and left action
\[
 \widetilde\ell_\gamma(x)v=x\delta(v)+(b+\gamma)xv,
 \qquad \gamma\in\kk.
\]
Theorem~B removes the finite-dimensionality assumption.
To compare the parameters, consider the regular bimodule.
The classification established in the proof of
Theorem~\ref{thm:irreducible-bimodule-classification} shows that
$D=\delta+m_b$ has a unique eigenvalue $\kappa$.
Choose $0\ne h\in\mathcal{A}$ with $Dh=\kappa h$.
Since $\delta(h)=(\kappa-b)h$, the ideal $\mathcal{A}h$ is
$\delta$-stable, so $h$ is a unit.
We have
\[
 (\delta+m_{b+\gamma})m_h
 =m_h\bigl(\delta+(\gamma+\kappa)\Id\bigr).
\]
Since $m_h$ commutes with every multiplication operator, it identifies
$\mathcal{M}(b,\gamma+\kappa)$ with this bimodule.  Thus $\lambda=\gamma+\kappa$.
For $b=b_{\alpha,\beta}$, the element $s_\alpha$ in
Proposition~\ref{prop:characteristic-polynomial} satisfies
$\Lop_{s_\alpha}=m_{s_\alpha}D$ and $s_\alpha(0)=1$.
Since $p^n$ is odd, we obtain
\[
 \kappa^{p^n}=\det D=\det\Lop_{s_\alpha}=\beta,
 \qquad \lambda=\gamma+\beta^{1/p^n}.
\]
In~\cite{PZmodules}, the associative action is recovered through a
Novikov--Poisson structure on the split null extension.  We use polynomial
relations among right multiplication operators
(Lemma~\ref{lem:bimodule-reconstruction}).
\end{remark}

\begin{remark}
Fix $n>0$ and $a,b\in\kk$.
Put $z=x^{(p^n-1)}$, $u=1+bz$, and $\partial=u\delta$.
For $\mu\in\kk$, consider the actions on $\OO(1;n)$
\[
 \ell_\mu(x)=m_x(\partial+\mu\Id),\qquad
 r(x)=m_{\partial(x)+azx}.
\]
As in the proof of
Theorem~\ref{thm:irreducible-bimodule-classification},
these define a nontrivial irreducible
$\mathcal{X}_n(a,b)$-bimodule.
Under the normalization~\eqref{eq:X-normal-form}, the left action
of $1$ becomes
\[
 T=\ell_\mu(u^{-1})=\delta+m_{\mu-b\mu z}.
\]
In the divided-power basis, $T$ has diagonal entries $\mu$,
entries $1$ immediately above the diagonal, and the additional
entry $-b\mu$ in the bottom-left corner.  Since $p^n$ is odd,
Theorem~\ref{thm:irreducible-bimodule-classification} gives
\begin{equation}
 \lambda^{p^n}=\det T=\mu^{p^n}-b\mu.
 \label{eq:Xu-to-canonical-module-parameter}
\end{equation}
Every $\lambda\in\kk$ occurs, so
Theorem~\ref{thm:irreducible-bimodule-classification} shows that
these exhaust the nontrivial irreducible
$\mathcal{X}_n(a,b)$-bimodules.
For fixed $a,b$, two values $\mu,\mu'$ give isomorphic bimodules
if and only if
\[
 (\mu-\mu')^{p^n}=b(\mu-\mu').
\]
If $b\ne0$, each isomorphism class is represented by exactly $p^n$
values of $\mu$, since the polynomial $X^{p^n}-bX$ has derivative $-b$.
If $b=0$, then $\lambda=\mu$.
Our parameter $\mu$ is denoted by $\lambda$ in
\cite[Main Theorem~2, p.~908]{Xu1996}.
For $b\ne0$, the value $\mu=0$ is equivalent to any
$\eta$ satisfying $\eta^{p^n-1}=b$, so the restriction $\mu\ne0$ in
\cite[Main Theorem~2]{Xu1996} does not remove an isomorphism class.
\end{remark}

\section*{\texorpdfstring{\normalfont\bfseries Acknowledgments}{Acknowledgments}}
The authors thank A.~P. Pozhidaev for helpful discussions and for sharing
the manuscript~\cite{PZmodules}.

\medskip
\noindent\textit{Note added.}
Upon completion of this paper, the authors learned from A.~P. Pozhidaev
that he and his collaborators had also obtained a classification and
an isomorphism criterion for finite-dimensional simple Novikov algebras
in an unpublished manuscript.

\end{document}